\newtheorem{theorem}{Theorem}[section]
\newtheorem{lemma}[theorem]{Lemma}
\newtheorem{proposition}[theorem]{Proposition}
\newtheorem{definition}{Definition}[section]
\newtheorem{corollary}[theorem]{Corollary}
\newtheorem{remark}[theorem]{Remark}
\newcommand{\cl}[1]{\mathcal{#1}} 
\newcommand{\bb}[1]{\mathbb{#1}}
\newcommand\id{\mathop{\rm id}}
\newcommand\ran{\mathop{\rm Ran}}
\newcommand\supp{\mathop{\rm supp}}
\newcommand\vn{\mathop{\rm VN}}
\newcommand{\Ref}[1]{\mathrm{Ref}(#1)} 
\newcommand\nph{\varphi}
\begin{document}

\title{Operator synthesis and tensor products}

\author{G.K. Eleftherakis and I.G. Todorov}

\address{Department of Mathematics, University of Athens, 
Athens 157 84, Greece}

\email{gelefth@math.uoa.gr}

\address{Pure Mathematics Research Centre, Queen's University Belfast,
Belfast BT7 1NN, United Kingdom}

\email{i.todorov@qub.ac.uk}

\date{12 January 2013}

\begin{abstract}
We show that Kraus' property $S_{\sigma}$ is preserved under 
taking weak* closed sums with masa-bimodules of finite width, 
and establish an intersection formula for 
weak* closed spans of tensor products, one of whose terms 
is a masa-bimodule of finite width. We initiate the study of 
the question of when operator synthesis is preserved under 
the formation of products and
prove that 
the union of finitely many sets of the form $\kappa\times\lambda$,
where $\kappa$ is a set of finite width, while $\lambda$ is 
operator synthetic, is, under a necessary restriction on the sets $\lambda$, 
again operator synthetic. 
We show that property $S_{\sigma}$ is preserved under
spatial Morita subordinance.
En route, we prove 
that non-atomic ternary masa-bimodules possess 
property $S_{\sigma}$ hereditarily.
\end{abstract}

\maketitle

\section{Introduction}

Operator synthesis was introduced by W.B. Arveson in his seminal paper 
\cite{a} as an operator theoretic version of the notion of spectral synthesis in 
Harmonic Analysis, and
was subsequently developed by 
J. Froelich, A. Katavolos,  J. Ludwig, V.S. Shulman, N. Spronk, L. Turowska
and the authors \cite{eletod}, \cite{f}, \cite{kt}, \cite{st}, \cite{st2}, \cite{spt},
\cite{jlms}, \cite{houston}, among others.
It was shown in \cite{f}, \cite{spt} and \cite{lt} that, 
for a large class of locally compact groups $G$,
given a closed subset $E$ of $G$, there is a canonical way to
produce a subset $E^*$ of the direct product $G\times G$, so that
the set $E$ satisfies spectral synthesis if and only if the set
$E^*$ satisfies operator synthesis. Thus, the well-known, and still
open, problem of whether the union of two sets of spectral synthesis satisfies
spectral synthesis can be viewed as a special case of the problem asking
whether the union of two operator synthetic sets is operator
synthetic.

Another problem in Harmonic Analysis asks 
when the product of two sets of spectral synthesis 
is again synthetic. 
The analogous question in the 
operator theory setting  
is closely related to 
\emph{property $S_{\sigma}$}, introduced by J. Kraus in 
\cite{kraus_tams}.
It is widely recognised that 
functional analytic tensor products display a larger degree of subtlety
than the algebraic ones, the reason for this being the fact that
they are defined as the \emph{completion} of the 
\emph{algebraic} tensor product of two objects (say, operator algebras, 
or operator spaces) with respect to an appropriate topology. 
Therefore,
it is usually not an easy task to determine the intersection
of two spaces, both given as completed
tensor products.
Such issues give rise to a number of 
important concepts in Operator Algebra Theory, {\it e.g.}
exactness \cite{pisier}.
Property $S_{\sigma}$ is instrumental in describing 
such intersections, and is closely related to a number of 
important approximation properties.
In particular, it was shown in \cite{kraus} to be equivalent to the
\emph{$\sigma$-weak approximation property},
while in \cite{hk}, an equivalence of when 
a group von Neumann algebra $\vn(G)$ possesses property $S_{\sigma}$ 
was formulated in terms of an approximation property of the underlying group $G$.

In this paper, we initiate the study of the question of when the 
direct product of two operator synthetic sets is operator synthetic. 
Furthermore, we combine the two stands of investigation 
highlighted in the previous two paragraphs by 
studying the question of when the union of direct products of 
operator synthetic sets is operator synthetic. 
The setting of operator synthesis is provided by the theory of 
masa-bimodules (see \cite{a}, \cite{eks}, \cite{st} and \cite{st2}).
A prominent role in our study is played by the 
masa-bimodules of \emph{finite width}.
This class is a natural extension of the 
class of CSL algebras of finite width, which was introduced in \cite{a}
as a far reaching, yet tractable, generalisation of \emph{nest algebras} \cite{dav}. 
It was shown in \cite{hok} that CSL algebras of finite width 
possess property $S_{\sigma}$. 
However, this class has for long remained the main example of 
operator spaces known to have this property. 
We note that the question of whether every 
weak* closed masa-bimodule possesses property $S_{\sigma}$ 
is still open (see \cite{kraus}).

It should be noted that masa-bimodules of finite width
have been studied in a number of other contexts. 
They include as a subclass the masa-bimodules which are 
\emph{ternary rings of operators} \cite{houston}, 
a class of operator spaces that has been studied extensively 
for the purposes of Operator Space Theory \cite{bm}. 
The supports of masa-bimodules of finite width 
(called henceforth \emph{sets of finite width}) 
are precisely the sets of solutions 
of systems of inequalities, and were shown 
in \cite{st} and \cite{jlms} to be operator synthetic, 
providing in this way the largest single class of sets 
that are known to satisfy operator synthesis. 
It was shown in \cite{eletod} that the 
union of an operator synthetic set and a set of finite width is 
operator synthetic.
In \cite{stt}, this line of investigation was continued 
by showing that masa-bimodules of finite width 
satisfy a rank one approximation property, 
and a large class of examples of 
\emph{sets of operator multiplicity}
was exhibited within this class.
They were the motivating example for the introduction and study of 
$\mathfrak{I}$-decomposable masa-bimodules in \cite{eletod}.

The weak* closed masa-bimodules are
precisely the weak* closed invariant subspaces of 
\emph{Schur multipliers} or, equivalently, of 
weak* continuous (completely) bounded masa-bimodule maps.
The projections in the algebra of all Schur multipliers, 
called henceforth \emph{Schur idempotents}, were 
at the core of the methods developed in 
\cite{eletod} in order to address the 
union problem, as well as the closely related 
problem of the reflexivity of weak* closed spans.

Here we significantly extend the techniques 
whose development was initiated in \cite{eletod}
by establishing an intersection formula involving tensor products 
and applying it to the study of the product and union problems
described above. 
Simultaneously, we initiate the study of the question of whether 
property $S_{\sigma}$ is preserved under taking weak* closed spans. 

The paper is organised as follows. 
After gathering some preliminary notions and 
results in Section \ref{s_prel}, we address in Section \ref{s_stab}
the preservation problem for property $S_{\sigma}$ outlined in the previous paragraph,
showing that 
the class of spaces possessing $S_{\sigma}$ is closed under taking 
weak* closed sums with masa-bimodules of finite width (Theorem \ref{th_sump}).
As a consequence, the weak* closed span of any finite number of 
masa-bimodules of finite width possesses property $S_{\sigma}$.
En route, we give a sufficient condition for a ternary masa-bimodule
to possess $S_{\sigma}$ hereditarily (Theorem \ref{th_trohed}).

In Section \ref{s_insum}, we establish the 
intersection formula 
\begin{equation}\label{eq_inf}
\bigcap_{j_1,\dots,j_r} \overline{\cl B_{j_1}^1 \otimes \cl U_1 + \cdots + \cl B_{j_r}^r \otimes \cl U_r}
 = \overline{(\cap_{j_1}\cl B_{j_1}^1) \otimes \cl U_1 + 
 \cdots + (\cap_{j_r}\cl B_{j_r}^1) \otimes \cl U_r},
\end{equation}
 valid for all masa-bimodules $\cl B_{j_p}^p$ of finite width
 and all weak* closed spaces of operators $\cl U_p$, $p = 1,\dots,r$, 
 $j_p = 1,\dots,m_p$ (Corollary \ref{epsilona}). 
In Section \ref{s_appos}, we formalise the relation between property $S_{\sigma}$ 
and the problem for the synthesis of products (see Corollary \ref{c_synps}). 
As part of Proposition \ref{1234}, we establish a subspace version of the 
relation between \emph{Fubini products} and the 
\emph{algebra tensor product formula} discussed in \cite{kraus_tams}.
These results, along with the formula (\ref{eq_inf}), are used
to show that 
the union of finitely many products $\kappa_i\times\lambda_i$,
where the sets $\kappa_i$ are of finite width, 
while $\lambda_i$ are operator synthetic sets satisfying certain necessary 
restrictions, is operator synthetic (Theorem \ref{th_fws}). 

Finally, in Section \ref{s_mor}, we show that property $S_{\sigma}$ is preserved 
under spatial Morita subordinance. As a corollary, we 
obtain that if $\cl L_1$ and $\cl L_2$ are isomorphic CSL's then 
the CSL algebra ${\rm Alg}\cl L_1$ possesses property $S_{\sigma}$
if and only if ${\rm Alg}\cl L_2$ does so.

It is natural to wonder whether our results are valid for the 
more general class consisting of intersections of 
$\mathfrak{I}$-decomposable spaces introduced in \cite{eletod}. 
We note that this class contains properly the class of 
masa-bimodules of finite width.
Progress in this direction would rely on the answer of the question of whether 
the \emph{approximately $\mathfrak{I}$-injective} masa-bimodules 
(that is, the intersections of descending sequences of ranges of uniformly bounded 
Schur idempotents) satisfy property $S_{\sigma}$;
this question, however, is still open.


\section{Preliminaries}\label{s_prel}

In this section, we collect some preliminary notions and results that will be needed in the sequel. 
If $H$ and $K$ are Hilbert spaces, we denote by $\cl B(H,K)$ the space of all bounded linear operators
from $H$ into $K$, and write $\cl B(H) = \cl B(H,H)$. The space $\cl B(H,K)$ is the dual of 
the ideal of all trace class operators from $K$ into $H$, and can hence be endowed with a weak* topology; 
we note that this is the weakest topology on $\cl B(H,K)$ with respect to which the functionals $\omega$ of the form 
$$\omega(T) = \sum_{k=1}^{\infty} (T\xi_k,\eta_k), \ \ \ T\in \cl B(H,K),$$
where $(\xi_k)_{k\in \bb{N}}\subseteq H$ and $(\eta_k)_{k\in \bb{N}}\subseteq K$ are square summable sequences of vectors, 
are continuous. 
In the sequel, we denote by $\overline{\cl U}$ the weak* closure of a set $\cl U\subseteq \cl B(H,K)$. 

Throughout the paper, $H,K,H_1,K_1,H_2$ and $K_2$ will denote Hilbert spaces. Let $\cl V\subseteq \cl B(H_1,H_2)$
and $\cl U\subseteq \cl B(K_1,K_2)$ be weak* closed subspaces. 
We denote by 
$\cl V\bar\otimes\cl U$ the weak* closed subspace of $\cl B(H_1\otimes K_1,H_2\otimes K_2)$ 
generated by the operators of the form $S\otimes T$, where $S\in \cl V$ and $T\in \cl U$.
Here,  $H\otimes K$ is the Hilbertian tensor product of
$H$ and $K$, and we use the natural identification 
$$\cl B(H_1\otimes K_1,H_2\otimes K_2) \equiv \cl B(H_1,H_2)\bar\otimes\cl B(K_1,K_2).$$

We will use some basic notions from Operator Space Theory; we refer the reader to 
the monographs \cite{bm}, \cite{er}, \cite{paul} and \cite{pisier} for the relevant definitions. 
If $\cl X$ is a linear space, we denote by $\id$ the identity map on $\cl X$. 
The range of a linear map $\phi$ on $\cl X$ is denoted by $\ran\phi$. 
As customary, the map $\phi$ is called idempotent if $\phi\circ\phi = \phi$;
we let $\phi^{\perp} = \id - \phi$. 
If $\cl X_1$ and $\cl X_2$ are subspaces of $\cl X$, we set 
$\cl X_1 + \cl X_2 = \{x_1 + x_2 : x_i\in \cl X_i, i = 1,2\}$.
If $\cl V_i \subseteq \cl B(H_1,H_2)$ and $\cl U_i \subseteq \cl B(K_1,K_2)$ are weak* closed subspaces, $i = 1,2$,
and $\phi : \cl V_1\to \cl V_2$ and $\psi : \cl U_1\to \cl U_2$ are
completely bounded weak* continuous maps, then there exists a (unique)
completely bounded weak* continuous map 
$\phi\otimes\psi : \cl V_1\bar\otimes\cl U_1\to \cl V_2 \bar\otimes\cl U_2$ 
such that $\phi\otimes\psi(A\otimes B) = \phi(A)\otimes\psi(B)$, 
$A\in \cl V_1$, $B\in \cl U_1$ \cite{bm}. 
In the case $\cl U_1 = \cl U_2 = \cl B(K_1,K_2)$, we write throughout the paper $\tilde{\phi} = \phi\otimes\id$. 
We denote by $\cl V_*$ the space of all weak* continuous functionals on $\cl V$. 
If $\cl V\subseteq \cl B(H_1,H_2)$ is a weak* closed subspace of operators
and $\omega\in \cl V_*$ then
we set $R_{\omega} = \tilde{\omega}$; thus, 
$R_{\omega} : \cl V\bar\otimes\cl B(K_1,K_2)\to \cl B(K_1,K_2)$
is  the \emph{Tomiyama's right slice map} corresponding to $\omega$
(here we have use the natural identification 
$\bb{C}\bar\otimes \cl B(K_1,K_2) \equiv \cl B(K_1,K_2)$).
We note that 
$R_{\omega}(A\otimes B) = \omega (A)B$, $A\in \cl V$, $B\in \cl B(K_1,K_2)$. 
If, further, $\cl U\subseteq \cl B(K_1,K_2)$ is a weak* closed subspace, the \emph{Fubini product}
$\cl F(\cl V, \cl U)$ of $\cl V$ and $\cl U$ is the 
subspace of $\cl V\bar\otimes \cl B(K_1,K_2)$ given by
$$\cl F(\cl V, \cl U)=\{T \in \cl V\bar\otimes \cl B(K_1,K_2): R_\omega (T)\in \cl U, \ \mbox{ for all } \omega\in \cl V_*\}.$$
If $\xi\in H_1$ and $\eta\in H_2$, we let $\omega_{\xi,\eta}$ be the vector functional on $\cl B(H_1,H_2)$
given by $\omega_{\xi,\eta}(A) = (A\xi,\eta)$, $A\in \cl B(H_1,H_2)$; 
we use the same symbol to denote the restriction of $\omega_{\xi,\eta}$ to the subspace $\cl U\subseteq \cl B(H_1,H_2)$.

It is easy to notice that $\cl V\bar\otimes \cl U \subseteq \cl F(\cl V,\cl U)$. 
The subspace $\cl V$ is said to possess \emph{property $S_\sigma$} 
if $\cl F(\cl V, \cl U) = \cl V \bar\otimes \cl U$ for all weak* subspaces $\cl U\subseteq \cl B(K_1,K_2)$ and all Hilbert spaces $K_1, K_2$. 
This notion was introduced by Kraus in \cite{kraus_tams}, 
where he showed that $\cl B(K_1,K_2)$ possesses property $S_{\sigma}$. 
(We note that Kraus considered the case $K_1 = K_2$; 
however, it is easy to see that one can state both the definition and 
the result in terms of two Hilbert spaces.)
From this fact, one can easily derive the formula
$$\cl F(\cl V, \cl U) = (\cl V\bar\otimes \cl B(K_1,K_2)) \cap (\cl B(H_1,H_2) \bar\otimes \cl U).$$

Now suppose that $H_1$ and $H_2$ are separable Hilbert spaces and 
$\cl D_1\subseteq \cl B(H_1)$ and $\cl D_2\subseteq \cl B(H_2)$ are 
maximal abelian selfadjoint algebras (for brevity, masas). 
A linear map $\phi$ on $\cl B(H_1,H_2)$ is called \emph{$\cl D_2,\cl D_1$-modular}, or a 
\emph{masa-bimodule map} when $\cl D_1$ and $\cl D_2$ are clear from the context, 
provided
$\phi(BXA) = B\phi(X)A$, for all $X\in \cl B(H_1,H_2)$, $A\in \cl D_1$ and $B\in \cl D_2$. 
We call the completely bounded weak* continuous $\cl D_2,\cl D_1$-modular maps on 
$\cl B(H_1,H_2)$ \emph{Schur maps} (relative to the pair $(\cl D_1,\cl D_2)$); 
a Schur map that is also an idempotent is called a
\emph{Schur idempotent}.
This terminology is natural 
in view of the fact that Schur maps correspond precisely to Schur multipliers, 
provided a particular coordinate representation of $\cl D_1$ and $\cl D_2$ is chosen. 
We refer the reader to \cite{eletod} for details; 
we will return to this perspective in Section \ref{s_appos}.

A \emph{$\cl D_2,\cl D_1$-bimodule}, or simply a \emph{masa-bimodule} when $\cl D_1$ and $\cl D_2$ are understood 
from the context, is a subspace $\cl V\subseteq \cl B(H_1,H_2)$ such that 
$BXA\in \cl V$ whenever $X\in \cl V$, $A\in \cl D_1$ and $B\in \cl D_2$. 
Masa-bimodules will 
be assumed to be weak* closed throughout the paper; 
they are precisely the weak* closed subspaces invariant under all Schur maps (see \cite[Proposition 3.2]{eletod}). 
A weak* closed masa-bimodule $\cl M$ is called \emph{ternary} 
\cite{kt}, \cite{houston} 
if $\cl M$ is a ternary ring of operators, that is, if $TS^*R\in \cl M$ whenever $T,S,R\in \cl M$ (see
also \cite{bm}). It is not difficult to see
that every ternary masa-bimodule is the intersection of a descending sequence 
of ranges of contractive Schur idempotents; this fact will be used extensively 
hereafter.
It is easy to notice that the ternary masa-bimodules acting on a single Hilbert space 
which are unital algebras are precisely the von Neumann algebras with abelian commutant.

A \emph{nest} on a Hilbert space $H$ is a totally ordered family of closed subspaces of $H$ 
that contains the intersection and the closed linear span (denoted $\vee$) of any if its subsets.
A \emph{nest algebra} is the subalgebra of $\cl B(H)$ of all operators leaving invariant each 
subspace of a given nest. 
A \emph{nest algebra bimodule} is a subspace $\cl V\subseteq \cl B(H_1,H_2)$ 
for which there exist nest algebras $\cl A\subseteq \cl B(H_1)$ and $\cl B\subseteq \cl B(H_2)$ 
such that $\cl B\cl V\cl A\subseteq \cl V$. All nest algebra bimodules will be assumed to be weak* closed. 
A $\cl D_2,\cl D_1$-bimodule $\cl V$ is said to have \emph{finite width} if it is of the form
$\cl V = \cl V_1\cap\cdots \cap\cl V_k$, where each 
$\cl V_j$ is a $\cl D_2,\cl D_1$-bimodule that is also a nest algebra bimodule. 
The smallest $k$ with this property is called the \emph{width} of $\cl V$. 
We note that every ternary masa-bimodule has width at most two \cite{kt}. 
If each $\cl V_j$ is a nest algebra then $\cl V$ is called a CSL algebra of finite width \cite{a}. 
It was shown in \cite{kraus_tams} that von Neumann algebras with  
abelian commutant possess property $S_{\sigma}$  and in \cite{hok} that every 
CSL algebra of finite width possesses property $S_{\sigma}$.

Suppose that $\cl V\subseteq \cl B(H_1,H_2)$ is a nest algebra bimodule. 
It was shown in \cite{ep} that there exist nests  
$\cl N_1\subseteq \cl B(H_1)$ and $\cl N_2 \subseteq \cl B(H_2)$ and an increasing
$\vee$-preserving map $\nph : \cl N_1\rightarrow \cl N_2$ 
such that
$$\cl V = \{X\in \cl B(H_1,H_2) : XN = \nph(N)XN, \ N\in \cl N_1\}.$$
Let $\{P_i\}_{i\in \bb{N}}\subseteq \cl N_1$ be a (countable) subset
dense in $\cl N_1$ in the strong operator topology 
such that the set $\{\nph(P_i)\}_{i\in \bb{N}}$ is
dense in $\cl N_2$, and 
$$\cl F_n = \{0,P_1,P_2,\dots,P_n,I\} = \{0 < N_1 < N_2 < \dots < N_n < I\}.$$
Set $N_0 = 0$ and $N_{n+1} = I$ and 
let $\phi_n,\psi_n : \cl B(H_1,H_2)\to \cl B(H_1,H_2)$ be the Schur idempotents 
(relative to any pair $(\cl D_1,\cl D_2)$ with $\cl N_1\subseteq \cl D_1$ qnd $\cl N_2\subseteq \cl D_2$) 
given by 
$$\phi_n(X) = \sum_{i=0}^n (\nph(N_{i+1}) - \nph(N_i))X(N_{i+1} - N_i), \ \ X\in \cl B(H_1,H_2),$$
$$\psi_n(X) = \sum_{0\leq i < j\leq n} (\nph(N_{i+1}) - \nph(N_i))X(N_{j+1} - N_j), \ \ X\in \cl B(H_1,H_2),$$
and $\cl M_n$ and $\cl W_n$ be the ranges of $\phi_n$ and $\psi_n$, respectively. 
We have that
$$\phi_n\psi_n = 0, \ \ \cl W_n\subseteq\cl V\subseteq \cl W_n + \cl M_n, \ \ \cl W_n\subseteq \cl W_{n+1}, \ \cl M_{n+1}\subseteq \cl M_n, \ n\in \bb{N},$$
and $\cap_{n=1}^{\infty} \cl M_n \subseteq \cl V$. 
We will call the family 
$(\phi_n, \psi_n, \cl M_n, \cl W_n)_{n\in \bb{N}}$
a \emph{decomposition scheme} for $\cl V$. 
Decomposition schemes were first explicitly used (although not referred to as such) 
in \cite{eletod} for the study of reflexivity and synthesis problems.
We note that, if
$\psi_{n,p}$, $p = 1,\dots,n$, is given by 
$$\psi_{n,p}(X) = \sum_{j - i = p} 
(\nph(N_{i+1}) - \nph(N_i))X(N_{j+1} - N_j), \ \ X\in \cl B(H_1,H_2),$$
then $\psi_n = \sum_{p=1}^n \psi_{n,p}$, 
$\psi_{n,p}\psi_{n,q} = 0$ if $p\neq q$ and $\|\psi_{n,p}\| = 1$, $p = 1,\dots,n$.


\section{Stability under summation with modules of finite width}\label{s_stab}

The main result in this section is Theorem \ref{th_sump}
and the associated Corollary \ref{c_tm}, which show that 
tensor product formulas are preserved under taking weak* closed 
sums with masa-bimodules of finite width. 
En route, we establish a sufficient condition for a
ternary masa-bimodule to possess  $S_{\sigma}$ hereditarily (Theorem \ref{th_trohed}). 
We begin with some lemmas.

\begin{lemma}\label{l_pre}\label{l_inphi}

Let $\phi$ be a weak* continuous completely bounded linear map on 
$\cl B(H_1,H_2)$ and 
$\cl V, \cl V_i\subseteq \cl B(H_1,H_2)$, 
$\cl U, \cl U_i\subseteq \cl B(K_1,K_2)$ be weak* closed subspaces, $i = 1,\dots,n$. 
Suppose that $\cl V$ is invariant under $\phi$.

(i)  \ We have $\tilde{\phi}(\overline{\sum_{i=1}^n \cl V_i \bar\otimes \cl U_i}) 
\subseteq \overline{\sum_{i=1}^n \overline{\phi(\cl V_i)} \bar\otimes \cl U_i}$. 
In particular, $\tilde{\phi}$ leaves $\cl V\bar\otimes \cl U$ invariant.

(ii) 
If $\phi$ is an idempotent then
$$(\ran \phi\bar\otimes\cl U)\cap (\cl V\bar\otimes\cl U)  
= \phi(\cl V)\bar\otimes\cl U 
= \tilde{\phi}(\cl V\bar\otimes\cl U).$$
In particular, ranges of Schur idempotents possess property $S_{\sigma}$.
\end{lemma}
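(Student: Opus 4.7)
For part (i), the plan is to first verify the claimed inclusion on the algebraic sum $\sum_{i=1}^n \cl V_i \otimes \cl U_i$ and then propagate it by weak* continuity. On an elementary tensor $A \otimes B$ with $A \in \cl V_i$ and $B \in \cl U_i$, the map $\tilde\phi = \phi \otimes \id$ acts as $\phi(A) \otimes B$, which belongs to $\overline{\phi(\cl V_i)} \bar\otimes \cl U_i$. Linearity extends this to finite sums, while weak* continuity of $\tilde\phi$, combined with the fact that the target $\overline{\sum_i \overline{\phi(\cl V_i)} \bar\otimes \cl U_i}$ is weak* closed, extends the inclusion to the full weak* closure of the source. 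The ``in particular'' clause is the case $n=1$, $\cl V_1 = \cl V$, $\cl U_1 = \cl U$: the hypothesis that $\cl V$ is $\phi$-invariant gives $\overline{\phi(\cl V)} \subseteq \cl V$, whence $\tilde\phi(\cl V \bar\otimes \cl U) \subseteq \cl V \bar\otimes \cl U$.

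For part (ii), the key structural observation to record first is that $\tilde\phi$ inherits idempotence from $\phi$, that $\ran\tilde\phi = \ran\phi \bar\otimes \cl B(K_1, K_2)$, and that $\tilde\phi$ acts as the identity on this range (verified on elementary tensors and propagated by weak* continuity); in particular $\phi(\cl V) = \ran\phi \cap \cl V$ is weak* closed. The inclusion $\tilde\phi(\cl V \bar\otimes \cl U) \subseteq \phi(\cl V) \bar\otimes \cl U$ is immediate from (i). Conversely, since $\cl V \bar\otimes \cl U$ is $\tilde\phi$-invariant by (i), the set $\tilde\phi(\cl V \bar\otimes \cl U) = \ran\tilde\phi \cap (\cl V \bar\otimes \cl U)$ is weak* closed and contains all elementary tensors $\phi(A) \otimes B = \tilde\phi(A \otimes B)$ with $A \in \cl V$, $B \in \cl U$, whose weak* closed span is $\phi(\cl V) \bar\otimes \cl U$. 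For the first equality, $\tilde\phi(\cl V \bar\otimes \cl U) \subseteq (\ran\phi \bar\otimes \cl U) \cap (\cl V \bar\otimes \cl U)$ is obtained from the preceding inclusions, while any $T$ in this intersection satisfies $T \in \ran\phi \bar\otimes \cl U \subseteq \ran\tilde\phi$, hence $\tilde\phi(T) = T$, giving $T = \tilde\phi(T) \in \tilde\phi(\cl V \bar\otimes \cl U)$.

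The final assertion follows from (i) applied to $\cl V := \ran\phi$: using the Fubini-product identity $\cl F(\cl V, \cl U) = (\cl V \bar\otimes \cl B(K_1, K_2)) \cap (\cl B(H_1, H_2) \bar\otimes \cl U)$ recorded in the preliminaries, any $T$ in $\cl F(\cl V, \cl U)$ lies in $\ran\tilde\phi$, so $\tilde\phi(T) = T$, and part (i) with $\cl V_1 = \cl B(H_1, H_2)$, $\cl U_1 = \cl U$ places $\tilde\phi(T)$ in $\overline{\ran\phi} \bar\otimes \cl U = \ran\phi \bar\otimes \cl U = \cl V \bar\otimes \cl U$. I do not anticipate substantial difficulties; the one bookkeeping point that must be handled carefully is the identification $\ran\tilde\phi = \ran\phi \bar\otimes \cl B(K_1, K_2)$ and the consequent fact that $\tilde\phi$ fixes every element of $\ran\phi \bar\otimes \cl U$, since this underlies each of the three equalities.
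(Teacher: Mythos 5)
Your proposal is correct and follows essentially the same route as the paper: part (i) by checking elementary tensors and invoking linearity and weak* continuity, and part (ii) by closing a cycle of inclusions using the facts that $\tilde{\phi}$ fixes $\ran\phi\bar\otimes\cl B(K_1,K_2)$ (the paper phrases this as $\tilde{\phi}^{\perp}(T)=0$) and that $\cl F(\ran\phi,\cl U)$ equals the intersection of the two slice-map spaces. The only cosmetic difference is that you obtain $\phi(\cl V)\bar\otimes\cl U\subseteq\tilde{\phi}(\cl V\bar\otimes\cl U)$ directly from the weak* closedness of $\tilde{\phi}(\cl V\bar\otimes\cl U)=\ran\tilde{\phi}\cap(\cl V\bar\otimes\cl U)$, while the paper routes this inclusion through the intersection $(\ran\phi\bar\otimes\cl U)\cap(\cl V\bar\otimes\cl U)$; both are sound.
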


\begin{proof}

(i) Fix $i\in \{1,\dots,n\}$ and
suppose that 
$T\in \cl V_i \bar\otimes \cl U_i$. Then 
$T$ can be approximated in the weak* topology 
by operators of the form $\sum_{j=1}^k A_j\otimes B_j$, 
where $A_j\in \cl V_i$, $B_j\in \cl U_i$, $j = 1,\dots,k$;
therefore,
$\tilde{\phi}(T)$ can be approximated in the weak* topology 
by operators of the form $\sum_{j=1}^k \phi(A_j)\otimes B_j$, 
where $A_j\in \cl V_i$, $B_j\in \cl U_i$, $j = 1,\dots,k$.
Hence, $\tilde{\phi}(T)\in \overline{\phi(\cl V_i)}\bar\otimes \cl U_i$. 
The conclusion now follows from the linearity and the weak* continuity of $\tilde{\phi}$.

(ii) Since $\phi$ is an idempotent, $\phi(\cl V)$ is weak* closed. 
By (i), 
$\tilde{\phi}(\cl V\bar\otimes\cl U)
\subseteq \phi(\cl V)\bar\otimes\cl U$
while, since $\phi(\cl V)\subseteq \cl V$, we 
have $\phi(\cl V)\bar\otimes\cl U \subseteq 
(\ran \phi\bar\otimes\cl U)\cap (\cl V\bar\otimes\cl U)$.
Suppose that $T\in (\ran \phi\bar\otimes\cl U)\cap (\cl V\bar\otimes\cl U)$. 
Then, by (i), $\tilde{\phi}^{\perp}(T) = 0$ and hence 
$$T = \tilde{\phi}(T) \in \tilde{\phi}(\cl V\bar\otimes\cl U).$$

Finally, if 
$$T\in (\ran\phi \bar\otimes\cl B(K_1,K_2))\cap (\cl B(H_1,H_2)\bar\otimes \cl U)$$
then, by the previous paragraph,
$T = \tilde{\phi}(T) \in \phi(\cl B(H_1,H_2))\bar\otimes\cl U$; thus, 
$\ran\phi$ possesses $S_{\sigma}$. 
\end{proof}

\begin{lemma}\label{l_o2}
Let $\phi$ be a weak* continuous completely bounded idempotent acting on $\cl B(H_1,H_2)$, 
$\cl V\subseteq \cl B(H_1,H_2)$ be a weak* closed subspace invariant under $\phi$,
$\cl U\subseteq \cl B(K_1,K_2)$ be a weak* closed subspace and
$\cl W\subseteq \cl B(H_1\otimes K_1,H_2\otimes K_2)$ be a weak* closed
subspace invariant under $\tilde{\phi}$. 
Then 
$$\overline{\cl V \bar\otimes \cl U + \cl W} \cap
\overline{\ran\phi\bar\otimes \cl U + \cl W} = 
\overline{(\ran \phi \cap \cl V) \bar\otimes \cl U + \cl W}.$$
\end{lemma}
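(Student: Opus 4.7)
The inclusion $\supseteq$ is immediate, since $\ran\phi\cap\cl V$ is contained in both $\ran\phi$ and $\cl V$; the content is the reverse inclusion. The plan is to use the idempotent $\tilde\phi$ on $\cl B(H_1\otimes K_1, H_2\otimes K_2)$ to split any $T$ in the left hand side as $T = \tilde\phi(T) + \tilde\phi^\perp(T)$, where $\tilde\phi^\perp = \id - \tilde\phi$, and to handle each summand using a different hypothesis on $T$.

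First I would record two elementary facts. Since $\phi$ is idempotent and $\cl V$ is $\phi$-invariant, the equality $\phi(\cl V) = \ran\phi\cap\cl V$ holds (every $X$ in $\ran\phi\cap\cl V$ satisfies $X = \phi(X)\in\phi(\cl V)$), and this set is weak* closed as the intersection of the weak* closed sets $\cl V$ and $\ran\phi = \{X : \phi(X)=X\}$. Secondly, $\tilde\phi$ restricted to $\ran\phi \,\bar\otimes\, \cl U$ is the identity: this is clear on elementary tensors $A\otimes B$ with $A\in\ran\phi$, and extends to the weak* closure by weak* continuity of $\tilde\phi$. In particular $\tilde\phi^\perp$ vanishes identically on $\ran\phi\,\bar\otimes\,\cl U$.

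Now take $T$ in the left hand side. Using $T\in\overline{\ran\phi\,\bar\otimes\,\cl U + \cl W}$, approximate $T$ weak* by nets $Y_\alpha + V_\alpha$ with $Y_\alpha\in \ran\phi\,\bar\otimes\,\cl U$ and $V_\alpha\in\cl W$; then $\tilde\phi^\perp(Y_\alpha)=0$ by the second observation, while $\tilde\phi^\perp(V_\alpha)\in\cl W$ because $\cl W$ is $\tilde\phi$-invariant (hence $\tilde\phi^\perp$-invariant). Weak* continuity of $\tilde\phi^\perp$ yields $\tilde\phi^\perp(T)\in\cl W$. Using $T\in\overline{\cl V\,\bar\otimes\,\cl U + \cl W}$, approximate $T$ weak* by $X_\beta + W_\beta$ with $X_\beta\in\cl V\,\bar\otimes\,\cl U$ and $W_\beta\in\cl W$; by Lemma~\ref{l_pre}(i), $\tilde\phi(X_\beta)\in \overline{\phi(\cl V)}\,\bar\otimes\,\cl U = (\ran\phi\cap\cl V)\,\bar\otimes\,\cl U$, and $\tilde\phi(W_\beta)\in\cl W$ by invariance. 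Weak* continuity gives
$$\tilde\phi(T)\in\overline{(\ran\phi\cap\cl V)\,\bar\otimes\,\cl U + \cl W}.$$

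Combining,
$$T = \tilde\phi(T) + \tilde\phi^\perp(T) \in \overline{(\ran\phi\cap\cl V)\,\bar\otimes\,\cl U + \cl W} + \cl W = \overline{(\ran\phi\cap\cl V)\,\bar\otimes\,\cl U + \cl W},$$
which is the desired inclusion. The only genuine subtlety is justifying the two limits—that is, that applying $\tilde\phi$ or $\tilde\phi^\perp$ commutes with the weak* limit in $\alpha$ and $\beta$—which is precisely the weak* continuity of these maps together with the observations above, and is not really an obstacle; the proof is essentially a clean splitting argument.
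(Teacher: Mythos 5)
Your proof is correct and follows essentially the same route as the paper's: split $T = \tilde{\phi}(T) + \tilde{\phi}^{\perp}(T)$, use membership in $\overline{\ran\phi\bar\otimes\cl U + \cl W}$ to place $\tilde{\phi}^{\perp}(T)$ in $\cl W$, and membership in $\overline{\cl V\bar\otimes\cl U + \cl W}$ together with Lemma~\ref{l_pre}(i) and $\phi(\cl V)=\ran\phi\cap\cl V$ to place $\tilde{\phi}(T)$ in $\overline{(\ran\phi\cap\cl V)\bar\otimes\cl U + \cl W}$. The only difference is that you spell out the weak* approximation and continuity details that the paper compresses into a citation of Lemma~\ref{l_inphi}.
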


\begin{proof}
Suppose 
$$T\in 
\overline{\cl V \bar\otimes \cl U + \cl W} \cap
\overline{\ran\phi\bar\otimes \cl U + \cl W}.$$
By Lemma \ref{l_inphi} and the invariance of $\cl W$ under $\tilde{\phi}$, we have that
$\tilde{\phi}^{\perp}(T) \in \cl W$;
similarly, 
$$\tilde{\phi}(T) \in \overline{\phi (\cl V) \bar\otimes \cl U + \cl W}.$$
It follows that 
$$T = \tilde{\phi}^{\perp}(T) + \tilde{\phi}(T) \in 
\overline{(\ran\phi \cap \cl V) \bar\otimes \cl U + \cl W}.$$
The converse inclusion is trivial. 
\end{proof}

\begin{lemma}\label{l_invf}
Let $\cl V\subseteq \cl B(H_1,H_2)$ (resp. $\cl U\subseteq \cl B(K_1,K_2)$) be 
a weak* closed subspace and $\phi$ (resp. $\psi$) be a 
weak* continuous completely bounded map on $\cl B(H_1,H_2)$ (resp. $\cl B(K_1,K_2)$). Then
$$(\phi\otimes\psi)(\cl F(\cl V,\cl U))\subseteq \cl F(\overline{\phi(\cl V)},\overline{\psi(\cl U)}).$$
Moreover, if $\phi$ and $\psi$ are 
idempotents that leave $\cl V$ and $\cl U$, respectively, invariant, then 
$$(\phi\otimes\psi)(\cl F(\cl V,\cl U)) = \cl F(\phi(\cl V),\psi(\cl U)).$$
\end{lemma}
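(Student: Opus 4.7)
The plan is to handle the two assertions in turn, with the first inclusion supplied by a slice-map identity and the second obtained by supplementing this identity with the decomposition of Fubini products into intersections of tensor products.

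For the inclusion $(\phi\otimes\psi)(\cl F(\cl V,\cl U)) \subseteq \cl F(\overline{\phi(\cl V)}, \overline{\psi(\cl U)})$, I would first appeal to the natural extension of Lemma \ref{l_inphi}(i) (applied to the factorisation $\phi\otimes\psi = \tilde\phi\circ(\id\otimes\psi)$) to ensure that $(\phi\otimes\psi)(T)\in \overline{\phi(\cl V)}\bar\otimes \cl B(K_1,K_2)$ for every $T\in \cl V\bar\otimes\cl B(K_1,K_2)$. The crux is then the slice identity
$$R_\omega\circ(\phi\otimes\psi) \;=\; \psi\circ R_{\omega\circ\phi|_{\cl V}}\quad \text{on } \cl V\bar\otimes\cl B(K_1,K_2),$$
valid for any $\omega\in \overline{\phi(\cl V)}_*$; here $\omega\circ\phi|_{\cl V}$ genuinely lies in $\cl V_*$, since $\phi|_{\cl V}:\cl V\to\overline{\phi(\cl V)}$ is weak* continuous. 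The identity is immediate on an elementary tensor $A\otimes B$ and extends to $\cl V\bar\otimes\cl B(K_1,K_2)$ by the weak* continuity of both sides. Applying it to $T\in\cl F(\cl V,\cl U)$ and using $R_{\omega\circ\phi|_{\cl V}}(T)\in\cl U$ yields $R_\omega((\phi\otimes\psi)(T))=\psi(R_{\omega\circ\phi|_{\cl V}}(T))\in\psi(\cl U)\subseteq\overline{\psi(\cl U)}$, completing the first claim.

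For the moreover part, assume $\phi$ and $\psi$ are idempotents with $\phi(\cl V)\subseteq\cl V$ and $\psi(\cl U)\subseteq\cl U$. Then $\phi(\cl V)=\ran\phi\cap\cl V$ and $\psi(\cl U)=\ran\psi\cap\cl U$ are weak*-closed, so the first part supplies the inclusion $(\phi\otimes\psi)(\cl F(\cl V,\cl U))\subseteq\cl F(\phi(\cl V),\psi(\cl U))$. For the reverse inclusion, take $S\in\cl F(\phi(\cl V),\psi(\cl U))$ and invoke the decomposition
$$\cl F(\phi(\cl V),\psi(\cl U))\;=\;(\phi(\cl V)\bar\otimes\cl B(K_1,K_2))\cap(\cl B(H_1,H_2)\bar\otimes\psi(\cl U)),$$
a consequence of Kraus' theorem recorded in the preliminaries. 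This places $S$ in $\phi(\cl V)\bar\otimes\cl B(K_1,K_2)\subseteq\cl V\bar\otimes\cl B(K_1,K_2)$; running the slice argument of the previous paragraph with arbitrary $\omega\in\cl V_*$ (noting that $R_\omega(S)$ depends only on $\omega|_{\phi(\cl V)}$) then yields $R_\omega(S)\in\psi(\cl U)\subseteq\cl U$, so $S\in\cl F(\cl V,\cl U)$. Finally, both $\tilde\phi$ and $\id\otimes\psi$ are idempotents that, by Lemma \ref{l_inphi}(ii), fix every element of their respective ranges; hence $(\phi\otimes\psi)(S)=\tilde\phi\circ(\id\otimes\psi)(S)=S$, and so $S=(\phi\otimes\psi)(S)\in(\phi\otimes\psi)(\cl F(\cl V,\cl U))$.

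The main obstacle is verifying the slice identity $R_\omega\circ(\phi\otimes\psi)=\psi\circ R_{\omega\circ\phi|_{\cl V}}$ at the correct level of generality; everything else is bookkeeping. The subtle point is that $\omega$ lives a priori only on the weak* closure $\overline{\phi(\cl V)}$, but its pullback $\omega\circ\phi|_{\cl V}$ automatically belongs to $\cl V_*$ by the weak* continuity of $\phi|_{\cl V}$, so no Hahn--Banach extension of $\omega$ to the ambient $\cl B(H_1,H_2)$ is required.
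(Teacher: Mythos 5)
Your argument is correct. The ``moreover'' half is essentially the paper's own proof: you use the intersection formula $\cl F(\phi(\cl V),\psi(\cl U))=(\phi(\cl V)\bar\otimes\cl B(K_1,K_2))\cap(\cl B(H_1,H_2)\bar\otimes\psi(\cl U))$ to see that $\tilde\phi$ and $\id\otimes\psi$ fix every element of $\cl F(\phi(\cl V),\psi(\cl U))$, so that $S=(\phi\otimes\psi)(S)$, exactly as in the text. Where you genuinely diverge is the first inclusion. The paper applies Lemma \ref{l_pre}(i) to each of the two factors in the intersection formula $\cl F(\cl V,\cl U)=(\cl V\bar\otimes\cl B(K_1,K_2))\cap(\cl B(H_1,H_2)\bar\otimes\cl U)$, and so leans on Kraus' theorem that $\cl B(K_1,K_2)$ has property $S_\sigma$. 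You instead verify the defining condition of the Fubini product directly via the intertwining identity $R_\omega\circ(\phi\otimes\psi)=\psi\circ R_{\omega\circ\phi|_{\cl V}}$, which is correctly checked on elementary tensors and extended by weak* continuity; your observation that $\omega\circ\phi|_{\cl V}$ lands in $\cl V_*$ without any Hahn--Banach extension is the right point to flag. Your route is marginally longer but has the small advantage of not invoking Kraus' theorem for the first inclusion (only the definition of $\cl F$ as a subspace of $\cl V\bar\otimes\cl B(K_1,K_2)$ cut out by right slices); the paper's route is shorter because the intersection formula is already on record and is needed for the second half anyway. Both are sound.
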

\begin{proof}
By Lemma \ref{l_pre},
\begin{eqnarray*}
(\phi\otimes\psi)(\cl V\bar\otimes\cl B(K_1,K_2)) & = & 
\tilde{\phi} \circ (\id\otimes\psi)(\cl V\bar\otimes\cl B(K_1,K_2))\\
& \subseteq & \tilde{\phi} (\cl V\bar\otimes\cl B(K_1,K_2))\subseteq 
\overline{\phi(\cl V)}\bar\otimes\cl B(K_1,K_2);
\end{eqnarray*}
similarly, 
$$(\phi\otimes\psi)(\cl B(H_1,H_2)\bar\otimes\cl U)\subseteq 
\cl B(H_1,H_2)\bar\otimes \overline{\psi(\cl U)}.$$
Hence,
\begin{eqnarray*}
(\phi\otimes\psi)(\cl F(\cl V,\cl U)) & = &
(\phi\otimes\psi)((\cl V\bar\otimes\cl B(K_1,K_2)) \cap (\cl B(H_1,H_2)\bar\otimes\cl U))\\
& \subseteq &
(\overline{\phi(\cl V)}\bar\otimes\cl B(K_1,K_2)) \cap (\cl B(H_1,H_2)\bar\otimes\overline{\psi(\cl U)})
= 
\cl F(\overline{\phi(\cl V)},\overline{\psi(\cl U)}).
\end{eqnarray*}

Now suppose that 
$\phi$ and $\psi$ are idempotents that leave $\cl V$ and $\cl U$, respectively, invariant.
Then, clearly, 
$\cl F(\phi(\cl V),\psi(\cl U))\subseteq \cl F(\cl V,\cl U)$. 
On the other hand, if 
$$T\in \cl F(\phi(\cl V),\psi(\cl U)) = (\phi(\cl U)\bar\otimes \cl B(K_1,K_2)) \cap (\cl B(H_1,H_2)\bar\otimes \psi(\cl V))$$
then 
$$\phi\otimes \psi(T) = (\id\otimes\psi)(\phi\otimes \id)(T) = T,$$
and hence $T = \phi\otimes\psi(T)\in (\phi\otimes\psi)(\cl F(\cl V,\cl U)).$
Thus, 
$\cl F(\phi(\cl V),\psi(\cl U))\subseteq \phi\otimes\psi)(\cl F(\cl V,\cl U))$; 
the converse inclusion follows from the previous paragraph. 
\end{proof}

Suppose that $\cl H_k$, $k\in \bb{N}$, are Hilbert spaces and let $\cl H = \oplus_{k\in \bb{N}}\cl H_k$. 
Then every operator $T\in \cl B(\cl H)$ has an operator matrix representation $T = (T_{i,j})$, where 
$T_{i,j}\in \cl B(\cl H_j,\cl H_i)$. Given a family $\cl X = (\cl X_{i,j})_{i,j\in \bb{N}}$, where 
$\cl X_{i,j}\subseteq \cl B(\cl H_j,\cl H_i)$ is a weak* closed subspace, we let
$$\widetilde{\cl X} = \{T = (T_{i,j})\in \cl B(\cl H) : T_{i,j}\in \cl X_{i,j}, i,j\in \bb{N}\}.$$
It can be readily verified that $\widetilde{\cl X}$ is weak* closed. Moreover, 
it follows directly from its definition that
if $\cl Y = (\cl Y_{i,j})_{i,j\in \bb{N}}$ is another such family, 
$\cl Z_{i,j} = \cl X_{i,j}\cap \cl Y_{i,j}$ and $\cl Z = (\cl Z_{i,j})_{i,j\in \bb{N}}$, then 
\begin{equation}\label{eq_inter}
\widetilde{\cl Z} = \widetilde{\cl X}\cap \widetilde{\cl Y}. 
\end{equation}


\begin{lemma}\label{l_dstp}
Let $H_1^k$, $H_2^k$, $k \in \bb{N}$, be Hilbert spaces, 
$H_1 =  \oplus_{k\in \bb{N}} H_1^k$, 
$H_2 =  \oplus_{k\in \bb{N}} H_2^k$, and 
$\cl U \subseteq \cl B(K_1,K_2)$ be a weak* closed subspace. 

(i) If $\cl V_{i,j}\subseteq \cl B(H_1^j,H_2^i)$ is a weak* closed subspace, $i,j \in \bb{N}$, 
$\cl V = (\cl V_{i,j})_{i,j\in \bb{N}}$ and 
$\cl P = (\cl F(\cl V_{i,j},\cl U))_{i,j\in \bb{N}}$, then 
$\cl F(\widetilde{\cl V},\cl U) = \widetilde{\cl P}$.

(ii) \ If $\cl V_k$ is a weak* closed subspace of $\cl B(H_1^k,H_2^k)$, $k\in \bb{N}$, 
then $\cl F(\oplus_{k\in \bb{N}} \cl V_k, \cl U) = \oplus_{k\in \bb{N}} \cl F(\cl V_k, \cl U) $.\end{lemma}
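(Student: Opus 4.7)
The plan is to derive (i) from the tensor product formula
$$\cl F(\cl V, \cl U) = (\cl V\bar\otimes \cl B(K_1,K_2)) \cap (\cl B(H_1,H_2) \bar\otimes \cl U)$$
(stated in the preliminaries) combined with the block intersection identity (\ref{eq_inter}). Part (ii) will then follow as a direct specialisation of (i).

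The crucial intermediate step I would establish is the following \emph{tensor-block formula}: for any family $(\cl V_{i,j})_{i,j\in\bb{N}}$ of weak* closed subspaces $\cl V_{i,j}\subseteq \cl B(H_1^j,H_2^i)$ with associated $\widetilde{\cl V}\subseteq \cl B(H_1,H_2)$, and for any weak* closed subspace $\cl U\subseteq \cl B(K_1,K_2)$,
$$\widetilde{\cl V}\bar\otimes\cl U \;=\; \widetilde{\cl Y}, \qquad \text{where } \cl Y_{i,j} = \cl V_{i,j}\bar\otimes\cl U,$$
the block structure on the right being taken with respect to $H_p\otimes K_p' = \oplus_k(H_p^k\otimes K_p')$, $p=1,2$.

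For the inclusion $\subseteq$, I would appeal to Lemma \ref{l_inphi}(i) applied to the weak* continuous completely contractive block-extraction map $\pi_{i,j}(T) = E_i^2 T E_j^1$, where $E_j^1$ and $E_i^2$ are the orthogonal projections onto $H_1^j$ and $H_2^i$; for $S\in \widetilde{\cl V}\bar\otimes\cl U$ the block $\tilde\pi_{i,j}(S)$ lies in $\overline{\pi_{i,j}(\widetilde{\cl V})}\bar\otimes\cl U$, which is naturally identified with $\cl V_{i,j}\bar\otimes\cl U$. For the reverse inclusion, given $S\in\widetilde{\cl Y}$ I would consider the truncations $(P_N^2\otimes I)\,S\,(P_N^1\otimes I)$, where $P_N^p = \sum_{k=1}^N E_k^p$; since $P_N^p\to I$ strongly and the truncations are uniformly bounded by $\|S\|$, they converge to $S$ in the weak* topology. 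Each truncation decomposes as the finite sum $\sum_{i,j\leq N}\tilde\pi_{i,j}(S)$ of "single-block" operators, and a single-block operator whose entry lies in $\cl V_{i,j}\bar\otimes\cl U$ clearly belongs to $\widetilde{\cl V}\bar\otimes\cl U$ (approximate the entry by elements of the algebraic tensor $\cl V_{i,j}\otimes\cl U$ and embed into the appropriate slot of $\widetilde{\cl V}$).

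With this formula in hand, part (i) is immediate: applying it once with the target space $\cl B(K_1,K_2)$ and once with $\cl U$ yields
$$\widetilde{\cl V}\bar\otimes\cl B(K_1,K_2) = \widetilde{(\cl V_{i,j}\bar\otimes\cl B(K_1,K_2))_{i,j}}, \qquad \cl B(H_1,H_2)\bar\otimes\cl U = \widetilde{(\cl B(H_1^j,H_2^i)\bar\otimes\cl U)_{i,j}};$$
intersecting these via (\ref{eq_inter}) gives a block-subspace whose $(i,j)$-entry is $(\cl V_{i,j}\bar\otimes \cl B(K_1,K_2))\cap (\cl B(H_1^j,H_2^i)\bar\otimes \cl U) = \cl F(\cl V_{i,j},\cl U)$, which is exactly $\widetilde{\cl P}$. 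Part (ii) is obtained by specialising $\cl V_{i,j} = \cl V_i$ for $i=j$ and $\cl V_{i,j}=\{0\}$ otherwise; then $\widetilde{\cl V} = \oplus_k\cl V_k$ and $\widetilde{\cl P}$ is block-diagonal with $k$-th entry $\cl F(\cl V_k,\cl U)$, i.e., $\oplus_k\cl F(\cl V_k,\cl U)$.

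The principal obstacle will be the $\supseteq$ direction of the tensor-block formula. The difficulty is that "blockwise membership" in a tensor product is an entry-by-entry condition and does not, in general, automatically lift to global membership in a weak* closed tensor product; the argument rests on the truncation scheme converging weak* to $S$, which requires the uniform boundedness of the coordinate projections together with their SOT convergence to the identity, and on the observation that each truncated operator, being supported on finitely many blocks, is already realised as a finite sum of elementary tensors in $\widetilde{\cl V}\otimes\cl U$.
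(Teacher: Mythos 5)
Your argument is correct, and it takes a genuinely different route from the paper's. The paper proves the lemma by a direct slice-map computation: it shows that the block-extraction maps $\epsilon_{i,j}$ commute with the left slice maps, $L_{\tau}\circ\epsilon_{i,j}=\epsilon_{i,j}\circ L_{\tau}$, and satisfy $R_{\omega}\circ\epsilon_{i,j}=R_{\omega\circ\epsilon_{i,j}}$, and then verifies the two inclusions of $\cl F(\widetilde{\cl V},\cl U)=\widetilde{\cl P}$ entry by entry, the harder inclusion resting on the expansion $R_{\omega}(T)=\lim_{N}\sum_{i,j\leq N}R_{\omega_{i,j}}(\epsilon_{i,j}(T))$. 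You instead isolate a blockwise description of the tensor product itself, $\widetilde{\cl V}\bar\otimes\cl U=\widetilde{\cl Y}$ with $\cl Y_{i,j}=\cl V_{i,j}\bar\otimes\cl U$, and deduce the lemma by intersecting two instances of this identity via (\ref{eq_inter}) and the formula $\cl F(\cl V,\cl U)=(\cl V\bar\otimes\cl B(K_1,K_2))\cap(\cl B(H_1,H_2)\bar\otimes\cl U)$. Both routes ultimately rest on Kraus's theorem that $\cl B(K_1,K_2)$ has property $S_{\sigma}$: you invoke it explicitly through the intersection formula, while the paper uses it implicitly when passing from ``$L_{\tau}(T)\in\widetilde{\cl V}$ for all $\tau$'' to $T\in\widetilde{\cl V}\bar\otimes\cl B(K_1,K_2)$. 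Your intermediate tensor-block formula is the more reusable statement, and its proof is sound: the forward inclusion is exactly Lemma \ref{l_inphi}(i) applied to the corner compressions $\pi_{i,j}$, whose image of $\widetilde{\cl V}$ is already the weak* closed space $\cl V_{i,j}$; the truncations $(P_N^2\otimes I)S(P_N^1\otimes I)$ are uniformly bounded and converge to $S$ in the weak operator topology, hence weak*; and a single-block operator with entry in $\cl V_{i,j}\bar\otimes\cl U$ lies in $\widetilde{\cl V}\bar\otimes\cl U$ because the corner embedding is weak* continuous. The same truncation device appears in the paper, only applied to the functional $\omega$ rather than to the operator $S$. Your derivation of (ii) from (i) coincides with the paper's.
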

\begin{proof}
(i)
Let $\epsilon_{i,j}$ be the evaluation at the $(i,j)$-entry of a matrix $(X_{l,m})_{l,m\in \bb{N}}$.
We identify $H_i\otimes K_i$ with $\oplus_{k=1}^{\infty} (H_i^k\otimes K_i)$, $i = 1,2$. 
We claim that, for every $\tau \in \cl B(K_1,K_2)_*$, we have 
\begin{equation}\label{eq_lr}
L_{\tau}(\epsilon_{i,j}(T)) = \epsilon_{i,j}(L_{\tau}(T)), \ \ \ T\in \cl B(H_1\otimes K_1,H_2\otimes K_2).
\end{equation}
Indeed, if $T = A\otimes B$, where $A = (A_{i,j})_{i,j\in \bb{N}} \in \cl B(H_1,H_2)$ and $B\in \cl B(K_1,K_2)$, 
then 
$$L_{\tau}(\epsilon_{i,j}(T)) = \tau(B)A_{i,j} = \epsilon_{i,j}(\tau(B)A) = \epsilon_{i,j}(L_{\tau}(T)),$$ and the 
general case follows by linearity and weak* continuity. 
A similar argument shows that, for every 
$\omega\in \cl B(H_1^j,H_2^i)_*$, we have 
\begin{equation}\label{eq_rr}
R_{\omega}(\epsilon_{i,j}(T)) = R_{\omega \circ \epsilon_{i,j}}(T), \ \ \ T\in \cl B(H_1\otimes K_1,H_2\otimes K_2).
\end{equation}

Now suppose that $T = (T_{i,j})\in \cl F(\widetilde{\cl V},\cl U)$. 
Then (\ref{eq_lr}) shows that $L_{\tau}(T_{i,j}) \in \cl V_{i,j}$
for every $\tau \in \cl B(K_1,K_2)_*$, while 
(\ref{eq_rr}) shows that $R_{\omega}(T_{i,j}) \in \cl U$
for every $\omega \in \cl B(H_1^j,H_2^i)_*$, $i,j\in \bb{N}$.
It follows that 
$T_{i,j} \in \cl F(\cl V_{i,j},\cl U)$, $i,j\in \bb{N}$; 
thus, $\cl F(\widetilde{\cl V},\cl U)\subseteq \widetilde{\cl P}$.

Conversely, suppose that $T\in \widetilde{\cl P}$. 
Then
$T_{i,j}\in \cl F(\cl V_{i,j}, \cl U)$ for every $i$ and $j$.
Given $\tau\in \cl B(K_1,K_2)_*$, (\ref{eq_lr}) implies that $L_{\tau}(T)\in \widetilde{\cl V}$.
On the other hand, letting $(E_{i,j})_{i,j}$ be the standard matrix unit system, 
(\ref{eq_rr}) shows that, if $\omega\in \cl B(H_1,H_2)_*$ and 
$\omega_{i,j}\in \cl B(H_1^j,H_2^i)_*$ is given by
$\omega_{i,j}(A) = \omega(A\otimes E_{i,j})$, 
then
$$R_{\omega}(T) 
= \lim_{N\to\infty} \sum_{i,j = 1}^N R_{\omega}(\epsilon_{i,j}(T)\otimes E_{i,j})
= \lim_{N\to\infty} \sum_{i,j = 1}^N R_{\omega_{i,j}}(\epsilon_{i,j}(T)) \in \cl U;$$
thus, $T\in \cl F(\widetilde{\cl V},\cl U)$.

(ii) is a special case of (i) obtained by letting $\cl V_{i,j} = \{0\}$ if $i\neq j$. 
\end{proof}

\begin{corollary}\label{c_psmp}
In the notation of Lemma \ref{l_dstp}, if $\cl V_{i,j}$ and $\cl V_k$ 
have property $S_{\sigma}$ for 
every $i,j$ and $k$, 
then $\widetilde{\cl V}$ and $\oplus_{k\in \bb{N}} \cl V_k$ do so as well. 
\end{corollary}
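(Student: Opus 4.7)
The plan is to reduce property $S_{\sigma}$ for $\widetilde{\cl V}$ to an entry-wise statement via Lemma \ref{l_dstp}(i), and then identify the resulting matrix space with $\widetilde{\cl V}\bar\otimes\cl U$ by a block-truncation argument. Fix a weak* closed subspace $\cl U\subseteq\cl B(K_1,K_2)$. Since each $\cl V_{i,j}$ has property $S_{\sigma}$, we have $\cl F(\cl V_{i,j},\cl U) = \cl V_{i,j}\bar\otimes\cl U$ for all $i,j$, so Lemma \ref{l_dstp}(i) yields
$$\cl F(\widetilde{\cl V},\cl U) \;=\; \widetilde{(\cl V_{i,j}\bar\otimes\cl U)_{i,j\in\bb{N}}}.$$
The inclusion $\widetilde{\cl V}\bar\otimes\cl U \subseteq \widetilde{(\cl V_{i,j}\bar\otimes\cl U)_{i,j}}$ is immediate by inspecting the $(i,j)$-entry of an elementary tensor $S\otimes B$ with $S=(S_{i,j})\in\widetilde{\cl V}$ and $B\in\cl U$, and passing to the weak* closure; so the work lies in the reverse inclusion.

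For this I would introduce the block embeddings. Let $P^a_N\in\cl B(H_a)$ be the projection onto $\oplus_{k\leq N} H_a^k$, let $V_k^a:H_a^k\hookrightarrow H_a$ be the canonical inclusions, and let $\iota_{i,j}:\cl B(H_1^j,H_2^i)\to\cl B(H_1,H_2)$ be the weak* continuous completely bounded block embedding $X\mapsto V_i^2 X (V_j^1)^{*}$. Since $\iota_{i,j}(\cl V_{i,j})\subseteq\widetilde{\cl V}$, Lemma \ref{l_pre}(i) applied to $\iota_{i,j}$ will give
$$\tilde{\iota}_{i,j}(\cl V_{i,j}\bar\otimes\cl U)\;\subseteq\; \widetilde{\cl V}\bar\otimes\cl U, \qquad i,j\in\bb{N}.$$
Given $T=(T_{i,j})\in \widetilde{(\cl V_{i,j}\bar\otimes\cl U)_{i,j}}$, the truncation
$$\Phi_N(T) \;:=\; (P^2_N\otimes I_{K_2})\, T\, (P^1_N\otimes I_{K_1}) \;=\; \sum_{i,j\leq N}\tilde{\iota}_{i,j}(T_{i,j})$$
will then lie in $\widetilde{\cl V}\bar\otimes\cl U$ by the preceding display; and since $P^a_N\to I_{H_a}$ strongly, $\Phi_N(T)\to T$ in the strong (hence weak*) operator topology. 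The weak* closedness of $\widetilde{\cl V}\bar\otimes\cl U$ will then give $T\in \widetilde{\cl V}\bar\otimes\cl U$, completing (i).

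Part (ii) follows from (i) by setting $\cl V_{k,k}=\cl V_k$ and $\cl V_{i,j}=\{0\}$ for $i\neq j$, noting that $\{0\}$ has property $S_{\sigma}$ vacuously. The main technical point is the truncation step: although each entry of $T$ is by hypothesis a weak* limit of elementary tensors in $\cl V_{i,j}\otimes\cl U$, these entry-wise approximations cannot be glued into a single approximant of $T$ directly. The finite compressions $\Phi_N(T)$ sidestep this issue because Lemma \ref{l_pre}(i) transports the $\iota_{i,j}$-modularity from the level of $\cl V_{i,j}$ to the level of the tensor product, ensuring each truncation automatically lies in the ambient space $\widetilde{\cl V}\bar\otimes\cl U$.
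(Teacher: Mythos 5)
Your proof is correct and follows the route the paper intends: the paper states this as an immediate corollary of Lemma \ref{l_dstp}(i), and your block-truncation argument simply makes explicit the identification $\widetilde{(\cl V_{i,j}\bar\otimes\cl U)_{i,j}} = \widetilde{\cl V}\bar\otimes\cl U$ that the paper leaves unstated (the only cosmetic caveat being that Lemma \ref{l_pre}(i) is phrased for maps on $\cl B(H_1,H_2)$ rather than for the embeddings $\iota_{i,j}$, but its proof, or a direct check on elementary tensors, covers your use of it, and the boundedness of the truncations $\Phi_N(T)$ justifies passing from strong to weak* convergence).
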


We note that the conclusion regarding the direct sum in the 
last corollary also follows from 
\cite[Proposition 1.11]{kraus_tams}.

It was shown in \cite{kraus_tams} that every von Neumann algebra with abelian commutant 
possesses property $S_{\sigma}$. We will shortly show that the same holds for ternary masa-bimodules.
We first need a lemma.

\begin{lemma}\label{l_co}
Assume that $H_1 = \ell^2$, $(e_i)_{i\in \bb{N}}$
is its standard orthonormal basis, and $H_2$ is a Hilbert space. 
Let $P_i$ be the rank one projection whose 
range is spanned by $e_i$, and
$Q_i$ be a projection on $H_2$, $i\in \bb{N}$.
The space
$$\cl V = \{T\in \cl B(H_1,H_2) : TP_i = Q_iTP_i, i\in \bb{N}\}$$ 
possesses property $S_{\sigma}$.
\end{lemma}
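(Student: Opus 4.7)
The plan is to realise $\cl V$ as a ``row'' of column subspaces, each the range of a Schur idempotent, and then to invoke Corollary \ref{c_psmp}. Specifically, I would write $H_1 = \oplus_{j \in \bb{N}} H_1^j$ with $H_1^j := \bb{C} e_j$, and equip $H_2$ with the trivial single-summand decomposition $H_2^1 := H_2$, $H_2^k := \{0\}$ for $k \geq 2$. Under the canonical identification of Lemma \ref{l_dstp}, an operator $T \in \cl B(H_1, H_2)$ corresponds to its sequence of columns $T|_{H_1^j} \in \cl B(\bb{C} e_j, H_2)$, and the defining condition $T P_j = Q_j T P_j$ translates to the requirement that $T e_j \in Q_j H_2$. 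Hence $\cl V = \widetilde{\cl V'}$, where $\cl V'_{1,j} := \{S \in \cl B(\bb{C} e_j, H_2) : S e_j \in Q_j H_2\}$ and $\cl V'_{i,j} := \{0\}$ for $i \geq 2$.

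The second step is to show that each $\cl V'_{1,j}$ possesses property $S_{\sigma}$. The left-multiplication map $\pi_j(S) := Q_j S$ on $\cl B(\bb{C} e_j, H_2)$ is a weak*-continuous completely contractive idempotent whose range is precisely $\cl V'_{1,j}$. Moreover, $\pi_j$ is a Schur idempotent with respect to the masa $\bb{C} \cdot I$ on the domain side and any masa $\cl D_2^{(j)} \subseteq \cl B(H_2)$ containing $Q_j$ on the range side; such a masa exists by extending the abelian von Neumann algebra $\{Q_j, I - Q_j\}''$ to a maximal abelian subalgebra of $\cl B(H_2)$ via Zorn's lemma. Lemma \ref{l_inphi}(ii) then gives that $\cl V'_{1,j}$ possesses property $S_{\sigma}$, while the zero summands $\cl V'_{i,j}$ for $i \geq 2$ trivially do as well.

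An application of Corollary \ref{c_psmp} then yields that $\cl V = \widetilde{\cl V'}$ possesses property $S_{\sigma}$. The main technical subtlety is the identification of $\cl V$ with $\widetilde{\cl V'}$ under the degenerate decomposition of $H_2$; however, this is routine once one notes that operators $H_1 \to H_2$ are in bijection with their column sequences and that the trivial zero blocks contribute zero subspaces, which are harmless in both Lemma \ref{l_dstp} and Corollary \ref{c_psmp}.
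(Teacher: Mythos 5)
Your proof is correct. It differs from the paper's in packaging rather than in substance: the paper argues directly, writing $T = \sum_{i=1}^{\infty} T(P_i\otimes I)$ for $T\in \cl F(\cl V,\cl U)$, observing that $T(P_i\otimes I) = (Q_i\otimes I)T(P_i\otimes I)$ lies in $(Q_i\cl B(H_1,H_2)P_i)\bar\otimes\cl U \subseteq \cl V\bar\otimes\cl U$ because the corner $Q_i\cl B(H_1,H_2)P_i$ is the range of a Schur idempotent and hence has $S_\sigma$ by Lemma \ref{l_inphi}(ii). You instead encode the same column decomposition into the block-matrix formalism of Lemma \ref{l_dstp} and then invoke Corollary \ref{c_psmp}, with the single-column constraint spaces $Q_j\cl B(\bb{C}e_j,H_2)$ playing the role of the corners. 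The two ingredients are identical in both arguments (a weak*-convergent column decomposition of $T$, plus $S_\sigma$ for the range of a weak*-continuous completely bounded idempotent); your route trades the paper's three-line direct computation for a reduction to the already-established general machinery, which is slightly longer but arguably more systematic. Two minor remarks: the detour through Zorn's lemma to realise $\pi_j$ as a Schur idempotent is unnecessary, since the $S_\sigma$ conclusion in Lemma \ref{l_inphi}(ii) only needs $\pi_j$ to be a weak*-continuous completely bounded idempotent (and if $H_2$ is not assumed separable the masa framework would not strictly apply anyway); and the degenerate decomposition $H_2^1 = H_2$, $H_2^k=\{0\}$ for $k\geq 2$ is harmless but could be avoided by stating Lemma \ref{l_dstp} for a one-term family on the range side.
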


\begin{proof}
Let $\cl U\subseteq \cl B(K_1,K_2)$ be a weak* closed subspace
and
$T\in \cl F(\cl V,\cl U)$. Then $T = \sum_{i=1}^{\infty} T(P_i\otimes I)$
(where the series converges in the weak* topology); it hence suffices 
to show that $T(P_i\otimes I)\in \cl V\bar\otimes \cl U$ for each $i$. 
However, since $T\in \cl V\bar\otimes \cl B(K_1,K_2)$, we 
have that $T(P_i\otimes I) = (Q_i\otimes I)T(P_i\otimes I)$. 
But $(Q_i\otimes I)T(P_i\otimes I) \in (Q_i\cl B(H_1,H_2)P_i)\bar\otimes\cl U$, 
and the latter space is contained in $\cl V\bar\otimes\cl U$. 
It follows that $T\in \cl V\bar\otimes\cl U$. 
\end{proof}

\begin{theorem}\label{th_trohed}
Let $\cl D_1\subseteq \cl B(H_1)$ and $\cl D_2\subseteq \cl B(H_2)$ be 
masas and $\cl M\subseteq \cl B(H_1,H_2)$ be a ternary $\cl D_2,\cl D_1$-bimodule. 
Then $\cl M$ possesses property $S_{\sigma}$.
If, moreover, $\cl M$ does
not contain subspaces of the form $\cl B(EH_1,FK_1)$, where 
$E\in \cl D_1$ and $F\in \cl D_2$ are non-zero non-atomic projections, then 
every masa-bimodule $\cl V$ with $\cl V\subseteq \cl M$ possesses property $S_{\sigma}$.
\end{theorem}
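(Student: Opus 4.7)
The plan is to prove both assertions of Theorem~\ref{th_trohed} by decomposing $\cl M$ (or $\cl V\subseteq\cl M$) according to the atomic and continuous parts of $\cl D_1$ and $\cl D_2$, verifying property $S_\sigma$ on each resulting block; Corollary~\ref{c_psmp} then assembles the conclusion.

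Let $p_a\in\cl D_1$ and $q_a\in\cl D_2$ denote the projections onto the atomic parts of the respective masas, and set $p_c=I-p_a$, $q_c=I-q_a$. The four corners $\cl M_{\epsilon,\delta}:=q_\epsilon\cl Mp_\delta$, with $\epsilon,\delta\in\{a,c\}$, yield the orthogonal direct sum $\cl M=\bigoplus_{\epsilon,\delta}\cl M_{\epsilon,\delta}$ of ternary masa-bimodules, so by Corollary~\ref{c_psmp} it suffices to prove $S_\sigma$ for each corner. For $\cl M_{\epsilon,a}$ (atomic input), write $p_aH_1=\bigoplus_kP_kH_1$ via the rank-one atoms $\{P_k\}$ of $\cl D_1p_a$; each column $\cl M_{\epsilon,a}P_k$ is a weak* closed $\cl D_2$-invariant subspace of $\cl B(P_kH_1,q_\epsilon H_2)\cong q_\epsilon H_2$, and is therefore of the form $R_kq_\epsilon H_2$ for some projection $R_k\in\cl D_2q_\epsilon$. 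This identifies $\cl M_{\epsilon,a}$ with the space $\{T:TP_k=R_kTP_k,\ k\in\bb{N}\}$ of Lemma~\ref{l_co}, delivering $S_\sigma$. Corners $\cl M_{a,c}$ (atomic output) are handled symmetrically by passing to $\cl M^*$, which is ternary as well, and noting that $S_\sigma$ is preserved under the involution $T\mapsto T^*$.

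The main obstacle lies in the continuous-continuous corner $\cl M_{c,c}$. Here the strategy is to first exhaust the fat sub-blocks: if $\cl B(E_0H_1,F_0H_2)\subseteq\cl M_{c,c}$ for some non-zero $E_0\leq p_c$, $F_0\leq q_c$, then the containment $F_0\cl M_{c,c}E_0\subseteq F_0\cl B(H_1,H_2)E_0=\cl B(E_0H_1,F_0H_2)$ forces equality, and this fat corner has $S_\sigma$ by Kraus' theorem. Iterating the extraction of maximal fat sub-blocks (a countable process, by separability of $H_1,H_2$), one reduces $\cl M_{c,c}$ to a residual ternary masa-bimodule $\cl M'$ that contains no $\cl B(EH_1,FH_2)$ with $E,F$ non-atomic. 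It remains to show $S_\sigma$ for $\cl M'$: by its TRO structure together with the absence of fat sub-blocks, $\cl M'$ embeds (as a corner of its linking algebra) into a von Neumann algebra with abelian commutant, and $S_\sigma$ follows from Kraus' original theorem. Reassembly via Corollary~\ref{c_psmp} completes the first assertion.

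For the hereditary second assertion, observe that the hypothesis on $\cl M$ descends to every masa-bimodule $\cl V\subseteq\cl M$. The same four-block decomposition applied to $\cl V$ behaves well: the atomic-input/output corners of $\cl V$ satisfy the column identification above, which uses only the bimodule structure, not ternarity, so Lemma~\ref{l_co} still applies. The continuous-continuous corner $\cl V_{c,c}$ sits directly in the residual \emph{thin} regime treated in the previous paragraph, and Corollary~\ref{c_psmp} yields $S_\sigma$ for $\cl V$.
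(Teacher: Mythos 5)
Your reduction to the four corners $q_\epsilon\cl M p_\delta$ via Corollary~\ref{c_psmp}, the identification of the atomic-input corners with the spaces of Lemma~\ref{l_co}, and the passage to adjoints for the atomic-output corners are all sound and close in spirit to what the paper does. The problems are concentrated in the continuous--continuous corner, in both assertions.

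For the first assertion, the entire weight of the argument rests on the sentence claiming that the residual TRO $\cl M'$ ``embeds (as a corner of its linking algebra) into a von Neumann algebra with abelian commutant.'' This is precisely the hard structural content, and you give no proof of it; it is essentially equivalent to the coordinatization theorem for ternary masa-bimodules, $\cl M \cong (\oplus_j M_{l_j,k_j}(\cl D))\oplus(\oplus_k \cl B(E_kH_1,F_kH_2))$ with $\cl D$ a continuous masa, which the paper imports from \cite{houston} and then feeds into Lemma~\ref{l_dstp} and Corollary~\ref{c_psmp}. (Note also that the ``absence of fat sub-blocks'' is not what makes the linking algebra's commutant abelian --- $\cl B(H_1,H_2)$ is itself a corner of $\cl B(H_1\oplus H_2)$, whose commutant is $\bb{C}$ --- so the logical role you assign to the exhaustion step is unclear; and the exhaustion itself, with its claim of countable termination, is only gestured at.) A corner of a von Neumann algebra with abelian commutant does inherit $S_\sigma$ (via Lemma~\ref{l_invf} with $\psi=\id$), so the route could be made to work, but as written the key step is an assertion, not an argument.

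The second assertion is where the proposal genuinely breaks. You dispose of $\cl V_{c,c}$ by saying it ``sits directly in the residual thin regime treated in the previous paragraph,'' but that paragraph (even if completed) establishes $S_\sigma$ only for the \emph{ternary} bimodule $\cl M_{c,c}$ itself: an arbitrary masa-bimodule $\cl V_{c,c}\subseteq\cl M_{c,c}$ is not a TRO, has no linking algebra, and cannot be fed into that argument. Property $S_\sigma$ is not hereditary under passing to weak* closed subspaces --- if it were, every weak* closed masa-bimodule would inherit it from $\cl B(H_1,H_2)$ and the open problem recalled in the introduction would be trivial --- so containment in a space with $S_\sigma$ proves nothing. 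This is exactly the point where the paper uses the coordinatization a second time: under the hypothesis, the continuous part of $\cl M$ is $\oplus_j M_{l_j,k_j}(\cl D)$, so $\cl V_{c,c}$ becomes a matrix space $\widetilde{\cl X}$ whose entries are weak* closed $\cl D,\cl D$-sub-bimodules of the continuous masa $\cl D$; each such entry is itself a (continuous) masa on a smaller space, hence has $S_\sigma$ by Kraus, and Corollary~\ref{c_psmp} reassembles $\cl V_{c,c}$. You have no substitute for this step, so the hereditary statement remains unproved.
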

\begin{proof}
We have (see, {\it e.g.}, \cite{houston}) 
that, up to unitary equivalence,  
$$\cl M = \left(\oplus_{j=1}^m M_{l_j,k_j}(\cl D)\right)\oplus 
\left(\oplus_{j=1}^{l} \cl B(E_kH_1,F_kH_2)\right),$$
where $m,l\in \bb{N}\cup\{\infty\}$, 
$\cl D$ is the multiplication masa of $L^{\infty}(0,1)$ 
acting on $L^2(0,1)$
and $E_k$ (resp. $F_k$) is a projection in $\cl D_1$ (resp. $\cl D_2$).
Since $\cl D$ possesses $S_{\sigma}$ \cite{kraus_tams}, Corollary \ref{c_psmp} 
implies that $\cl M$ does so as well. 

Suppose that $\cl M$ does
not contain subspaces of the form $\cl B(EH_1,FK_1)$, where 
$E\in \cl D_1$ and $F\in \cl D_2$ are non-zero non-atomic projections.
Then,  for each $k$, either $E_k$ is totally atomic or $F_k$ is such.
Let $\cl V\subseteq \cl M$ be a masa-bimodule.
Then $\cl V  = \cl M_0 \oplus \cl W_1 \oplus \cl W_2^*$, where 
$\cl M_0 \subseteq \oplus_{j=1}^m M_{l_j,k_j}(\cl D)$, while
$\cl W_1$ and $\cl W_2$ have the form described in Lemma \ref{l_co}. 
We have that $\cl M_0 = \oplus_{j=1}^m \cl U_j,$ where
$\cl U_j = \{(A_{p,q}^j) : A_{p,q}^j\in \cl D_{p,q}^j\}$, and
$\cl D_{p,q}^j \subseteq \cl D$, for all $p = 1,\dots,l_j$ and $q = 1,\dots,k_j$. 
We have that
$\cl D_{p,q}^j$ is itself a (continuous) masa and hence possesses property $S_{\sigma}$
\cite{kraus_tams}. 
It follows from Corollary \ref{c_psmp} that $\cl M_0$ has $S_{\sigma}$. 
On the other hand, every masa-bimodule contained in $\cl W_1$ or $\cl W_2^*$
has the form described in Lemma \ref{l_co}. 
By Corollary \ref{c_psmp} and Lemma \ref{l_co},
$\cl V$ has property $S_{\sigma}$.
\end{proof}

\begin{lemma}\label{l_inmm}
Let $\cl M,\cl V\subseteq \cl B(H_1,H_2)$ be masa-bimodules with $\cl M$ ternary, and 
let $\cl U\subseteq \cl B(K_1,K_2)$ be a weak* closed subspace.
Then
$$(\cl M\bar\otimes\cl U) \cap (\cl V\bar\otimes\cl U) = (\cl M\cap \cl V)\bar\otimes\cl U.$$
\end{lemma}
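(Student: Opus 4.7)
The plan is to prove the nontrivial inclusion $(\cl M \bar\otimes \cl U) \cap (\cl V \bar\otimes \cl U) \subseteq (\cl M \cap \cl V) \bar\otimes \cl U$ by exhibiting a single Schur idempotent $\phi$ on $\cl B(H_1,H_2)$ whose range equals $\cl M$, and then invoking Lemma \ref{l_inphi}(ii). Granted such a $\phi$, the argument is immediate: $\cl V$ is a masa-bimodule, hence invariant under $\phi$, so Lemma \ref{l_inphi}(ii) gives
$$(\cl M \bar\otimes \cl U) \cap (\cl V \bar\otimes \cl U) = (\ran\phi \bar\otimes \cl U) \cap (\cl V \bar\otimes \cl U) = \phi(\cl V) \bar\otimes \cl U,$$
and since $\phi$ is idempotent with $\cl V$ invariant, $\phi(\cl V) = \ran\phi \cap \cl V = \cl M \cap \cl V$.

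To construct $\phi$, I would invoke the structure theorem for ternary masa-bimodules used in the proof of Theorem \ref{th_trohed}: up to unitary equivalence,
$$\cl M \;\simeq\; \Bigl(\oplus_{j} M_{l_j,k_j}(\cl D)\Bigr) \oplus \Bigl(\oplus_{k} \cl B(E_k H_1, F_k H_2)\Bigr).$$
Each corner summand $\cl B(E_k H_1, F_k H_2)$ is the range of the Schur idempotent $T \mapsto F_k T E_k$, while each matrix-algebra summand $M_{l_j,k_j}(\cl D)$ is the range of the entry-wise conditional expectation onto $\cl D$. Assembling these componentwise and sending off-diagonal blocks in the induced direct-sum decomposition of $\cl B(H_1,H_2)$ to zero produces a completely contractive $\cl D_2,\cl D_1$-modular idempotent $\phi$ with $\ran\phi = \cl M$. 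In effect this amounts to strengthening the preliminaries' statement that $\cl M$ is a descending intersection of ranges of Schur idempotents to the single-idempotent version, but only for the ternary case.

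The main technical point is verifying that the assembled $\phi$ remains weak*-continuous when the direct sum is infinite. This follows from a dominated-convergence argument: for any weak*-continuous functional $\omega = \sum_k \omega_{\xi_k,\eta_k}$ on $\cl B(H_1,H_2)$, decomposing $\xi_k,\eta_k$ along the direct-sum structure yields trace-class functionals $\omega_n$ on the $n$-th summand whose norms are summable (by Cauchy--Schwarz applied to the square-summable sequences $(\xi_k)$ and $(\eta_k)$); combined with the uniform contractivity $\|\phi_n\|_{cb}\le 1$ of the summand projections, this makes the usual dominated estimate applicable. Once the weak*-continuity of $\phi$ is secured, the lemma follows in one line from Lemma \ref{l_inphi}(ii) as outlined. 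Note that the slice-map route based on writing $\cl M = \bigcap_n \ran\phi_n$ and iterating Lemma \ref{l_inphi}(ii) would stall at the step of identifying $\bigcap_n (\phi_n(\cl V) \bar\otimes \cl U)$ with $(\cl M\cap\cl V)\bar\otimes\cl U$, which is tantamount to proving that $\cl M \cap \cl V$ has property $S_\sigma$ — a difficulty that the single-idempotent approach sidesteps.
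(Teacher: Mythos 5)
The construction at the heart of your argument does not exist. You need a single weak* continuous completely bounded idempotent masa-bimodule map $\phi$ with $\ran\phi=\cl M$, and for the matrix summands $M_{l_j,k_j}(\cl D)$ with $\cl D$ the continuous masa of $L^\infty(0,1)$ you propose to use the entry-wise conditional expectation onto $\cl D$. But there is no \emph{normal} (weak* continuous) conditional expectation of $\cl B(L^2(0,1))$ onto a diffuse masa; the Tomiyama/Arveson-type expectations onto $\cl D$ are built from invariant means and are never weak* continuous. Equivalently, in the language the paper itself uses (citing Katavolos--Paulsen): the ranges of contractive Schur idempotents are exactly the spaces $\oplus_k\cl B(E_kH_1,F_kH_2)$ with $(E_k)$, $(F_k)$ mutually orthogonal projections, and a continuous masa is not of this form. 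This is precisely why the preliminaries only assert that a ternary masa-bimodule is a descending \emph{intersection} of such ranges; the ``single-idempotent strengthening'' you invoke is false whenever $\cl M$ has a non-trivial continuous part (already for $\cl M=\cl D$ itself). Your worry about weak* continuity of the infinite assembly is therefore aimed at the wrong place: the failure is already in each continuous summand, not in the summation. Your closing remark is also telling: you correctly observe that the intersection route forces one to prove something like property $S_\sigma$ for $\cl M\cap\cl V$, and you claim to sidestep this; in fact that is the irreducible content of the lemma, and it cannot be sidestepped.

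For comparison, the paper's proof splits $\cl M$ into its atomic part $\cl M_a=\oplus_i\cl B(E_iH_1,F_iH_2)$ and its continuous part $\cl M_c=\oplus_j M_{l_j,k_j}(\cl D)$. The atomic part is handled exactly as you propose, via the genuine Schur idempotent $X\mapsto\sum_iF_iXE_i$ and Lemma \ref{l_inphi}(ii). For the continuous part no such idempotent is available, so the paper instead takes $T\in(\cl M_c\bar\otimes\cl U)\cap(\cl V\bar\otimes\cl U)$, uses slice maps to place $T$ in $\cl F(\cl M_c\cap\cl V,\cl U)$, and then invokes the hereditary part of Theorem \ref{th_trohed} (every masa-bimodule contained in $\cl M_c$ has property $S_\sigma$, since $\cl M_c$ contains no full corner over non-atomic projections) to conclude $\cl F(\cl M_c\cap\cl V,\cl U)=(\cl M_c\cap\cl V)\bar\otimes\cl U$. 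If you want to repair your write-up, you must reintroduce an argument of this kind for the continuous summands.
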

\begin{proof}
Up to unitary equivalence,
$$\cl M = (\oplus_{j=1}^m M_{l_j,k_j}(\cl D)) \oplus \left(\oplus_{i=1}^k \cl B(E_i H_1,F_i H_2)\right),$$
where $k,m\in \bb{N}\cup\{\infty\}$, 
$(E_i)_{i=1}^k$ and $(F_i)_{i=1}^k$ are families of mutually orthogonal
projections belonging to the corresponding masas, while 
$\cl D$ is a continuous masa.
Let $\cl M_a = \oplus_{i=1}^k \cl B(E_i H_1,F_i H_2)$ be the atomic part of $\cl M$
and $\cl M_c =  \oplus_{j=1}^m M_{l_j,k_j}(\cl D_j)$ be its continuous part, both naturally identified with
subspaces of $\cl M$. We have a natural identification
$$\cl M\bar\otimes\cl U = (\cl M_a\bar\otimes\cl U) + (\cl M_c\bar\otimes\cl U)$$ and
$$(\cl M\bar\otimes\cl U) \cap (\cl V\bar\otimes\cl U) =
((\cl M_a\bar\otimes\cl U)\cap (\cl V\bar\otimes\cl U)) + ((\cl M_c\bar\otimes\cl U)\cap (\cl V\bar\otimes\cl U))$$
(for the second identity, we use the fact that, if $\theta$ is the Schur idempotent 
given by $\theta(X) = QXP$, where $P = \vee_i E_i$ and $Q = \vee_i F_i$, then 
$\tilde{\theta}$ leaves $\cl V\bar\otimes\cl U$ invariant and maps $\cl M\bar\otimes\cl U$
onto $\cl M_a\bar\otimes\cl U$).

Let $\phi$ be the map on $\cl B(H_1,H_2)$ given by 
$\phi(X) = \sum_{i=1}^{\infty} F_i X E_i$. Then $\phi$ is a 
contractive Schur idempotent and so,
by Lemma \ref{l_inphi}, we have
\begin{eqnarray*}
(\cl M_a\bar\otimes\cl U)\cap (\cl V\bar\otimes\cl U)
& = & (\ran\phi \bar\otimes\cl U)\cap (\cl V\bar\otimes\cl U)
=  \phi(\cl V)\bar\otimes \cl U\\
& = & (\cl M_a \cap \cl V) \bar\otimes \cl U \subseteq 
(\cl M \cap \cl V) \bar\otimes \cl U.
\end{eqnarray*}

Suppose that
$T\in (\cl M_c\bar\otimes\cl U)\cap (\cl V\bar\otimes\cl U)$. Then $L_{\tau}(T)\in \cl M_c\cap \cl V$
and $R_{\omega}(T)\in \cl U$ for all $\tau\in \cl B(K_1,K_2)_*$ and 
all $\omega\in \cl B(H_1,H_2)_*$.
Thus, 
$$T\in \cl F(\cl M_c\cap \cl V,\cl U) = (\cl M_c\cap \cl V)\bar\otimes\cl U \subseteq (\cl M\cap \cl V)\bar\otimes\cl U,$$
where the equality follows from Theorem \ref{th_trohed}, applied to the 
masa-bimdule $\cl M_c$. 
We hence showed that
$$(\cl M\bar\otimes\cl U) \cap (\cl V\bar\otimes\cl U) \subseteq (\cl M\cap \cl V)\bar\otimes\cl U;$$
the converse inclusion is trivial.
\end{proof}

\begin{lemma}\label{l_intem}
Let $(\cl M_n)_{n\in \bb{N}}$ be a descending sequence of 
ternary masa-bimodules in $\cl B(H_1,H_2)$ 
and $\cl M = \cap_{n\in \bb{N}}\cl M_n$. 
If $\cl U\subseteq \cl B(K_1,K_2)$ is a weak* closed subspace then 
$\cap_{n\in \bb{N}} (\cl M_n\bar\otimes\cl U) = \cl M\bar\otimes\cl U$. 
\end{lemma}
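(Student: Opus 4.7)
The inclusion $\cl M \bar\otimes \cl U \subseteq \cap_{n}(\cl M_n \bar\otimes \cl U)$ is immediate since $\cl M \subseteq \cl M_n$ for every $n$. The plan for the reverse inclusion rests on the observation that the intersection $\cl M = \cap_n \cl M_n$ inherits the ternary masa-bimodule structure of its members: it is weak*-closed (as an intersection of weak*-closed sets), is evidently a $\cl D_2,\cl D_1$-bimodule, and is closed under the ternary operation $TS^*R$ because each $\cl M_n$ is. Theorem \ref{th_trohed} then gives property $S_\sigma$ both to $\cl M$ itself and to each $\cl M_n$.

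Given this, fix an arbitrary $T \in \cap_n(\cl M_n \bar\otimes \cl U)$. My aim is to show $T \in \cl F(\cl M, \cl U) = (\cl M \bar\otimes \cl B(K_1,K_2)) \cap (\cl B(H_1,H_2) \bar\otimes \cl U)$; once this is established the conclusion $T \in \cl M \bar\otimes \cl U$ follows from property $S_\sigma$ of $\cl M$. The second membership is immediate, as $T \in \cl M_1 \bar\otimes \cl U \subseteq \cl B(H_1,H_2) \bar\otimes \cl U$. For the first, I would apply the left slice map $L_\tau = \id \otimes \tau$ for an arbitrary $\tau \in \cl B(K_1,K_2)_*$: a standard slice-map computation (checking on elementary tensors $A\otimes B$ with $A\in \cl M_n$ and extending by linearity and weak*-continuity) yields $L_\tau(T) \in \cl M_n$ for every $n$, hence $L_\tau(T) \in \cl M$.

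The main subtlety is to convert this information about left slices back into membership in $\cl M \bar\otimes \cl B(K_1,K_2)$, namely, to justify that $L_\tau(T) \in \cl M$ for every $\tau \in \cl B(K_1,K_2)_*$ implies $T \in \cl M \bar\otimes \cl B(K_1,K_2)$. This is the left-slice analogue of the intersection formula $\cl F(\cl V, \cl U) = (\cl V \bar\otimes \cl B(K_1,K_2)) \cap (\cl B(H_1,H_2) \bar\otimes \cl U)$ quoted in the preliminaries, and is obtained by swapping the two tensor factors and invoking Kraus's theorem (property $S_\sigma$ of $\cl B(K_1,K_2)$) in its opposite role; it is the only step where care is required, and it involves no new ideas beyond those already recorded in Section \ref{s_prel}.
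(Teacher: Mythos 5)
Your proof is correct and follows essentially the same route as the paper: reduce to showing $T\in\cl F(\cl M,\cl U)$ via left and right slice maps, then invoke Theorem \ref{th_trohed} to get property $S_{\sigma}$ for the ternary masa-bimodule $\cl M$. The only difference is that you make explicit the step (converting $L_{\tau}(T)\in\cl M$ for all $\tau$ into $T\in\cl M\bar\otimes\cl B(K_1,K_2)$ via Kraus's theorem for $\cl B(K_1,K_2)$) that the paper leaves implicit in its use of the Fubini product formula.
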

\begin{proof}
The inclusion $\cl M\bar\otimes\cl U \subseteq \cap_{n\in \bb{N}} (\cl M_n\bar\otimes\cl U)$ is  trivial. 
Suppose that $T\in \cl M_n\bar\otimes\cl U$ for each $n$. Then 
$L_{\tau}(T)\in \cl M_n$ for all $n$, and so $L_{\tau}(T)\in \cl M$, for all $\tau\in \cl B(K_1,K_2)_*$. 
On the other hand, $R_{\omega}(T)\in \cl U$ for all $\omega\in \cl B(H_1,H_2)_*$. It follows that 
$T\in \cl F(\cl M,\cl U)$ and since $\cl M$ 
possesses property $S_{\sigma}$ (Theorem \ref{th_trohed}), 
we conclude that $T\in \cl M\bar\otimes\cl U$. 
\end{proof}

We are now ready to prove the main result of this section.

\begin{theorem}\label{th_sump}
Let $\cl V\subseteq\cl B(H_1,H_2)$ be a masa-bimodule,
$\cl B\subseteq\cl B(H_1,H_2)$ be a masa-bimodule of finite width and 
$\cl U\subseteq \cl B(K_1,K_2)$ be a weak* closed subspace. 
If $\cl F(\cl V,\cl U) = \cl V\bar\otimes\cl U$ then 
$\cl F(\overline{\cl V + \cl B},\cl U) = \overline{\cl V+\cl B}\bar\otimes\cl U$.
In particular, if $\cl V$ possesses property $S_{\sigma}$ then
$\overline{\cl V + \cl B}$ does so as well. 
\end{theorem}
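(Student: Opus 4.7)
I would handle the width-one case ($\cl B$ a single nest algebra bimodule) first and then extend to general finite width. The entire argument rests on a \emph{substep}: if a masa-bimodule $\cl V'$ satisfies $\cl F(\cl V',\cl U)=\cl V'\bar\otimes\cl U$ and $\eta$ is any Schur idempotent, then $\overline{\cl V'+\ran\eta}$ satisfies the analogous identity. To prove this, split $T\in\cl F(\overline{\cl V'+\ran\eta},\cl U)$ as $T=\tilde\eta(T)+\tilde\eta^{\perp}(T)$; Lemma~\ref{l_pre}(i) places $\tilde\eta(T)$ in $\ran\eta\bar\otimes\cl U$, while $\eta^{\perp}$ annihilates $\ran\eta$ and preserves $\cl V'$, so the hypothesis on $\cl V'$ places $\tilde\eta^{\perp}(T)$ in $\cl F(\cl V',\cl U)=\cl V'\bar\otimes\cl U$.

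With the decomposition scheme $(\phi_n,\psi_n,\cl M_n,\cl W_n)$ of $\cl B$ in hand, I would iteratively apply the substep to the $n+1$ mutually orthogonal Schur idempotents $\psi_{n,1},\dots,\psi_{n,n},\phi_n$, whose ranges together span $\cl W_n+\cl M_n$, to obtain that $\overline{\cl V+\cl W_n+\cl M_n}$ satisfies the $\cl U$-Fubini identity for every $n$. Writing $\cl A=\overline{\cl V+\cl B}$, the relations $\cl W_n\subseteq\cl B\subseteq\cl W_n+\cl M_n$ imply $\overline{\cl V+\cl W_n+\cl M_n}=\overline{\cl A+\cl M_n}$, so every $T\in\cl F(\cl A,\cl U)\subseteq\cl F(\overline{\cl A+\cl M_n},\cl U)$ must lie in $\overline{\cl A\bar\otimes\cl U+\cl M_n\bar\otimes\cl U}$ for each $n$.

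The hardest step is the intersection identity
$$\bigcap_{n}\overline{\cl A\bar\otimes\cl U+\cl M_n\bar\otimes\cl U}=\cl A\bar\otimes\cl U.$$
Given $T$ in the left-hand side, $\tilde\phi_n^{\perp}$ preserves $\cl A\bar\otimes\cl U$ (because $\cl A$ is a masa-bimodule) and annihilates $\cl M_n\bar\otimes\cl U$, forcing $\tilde\phi_n^{\perp}(T)\in\cl A\bar\otimes\cl U$ and $\tilde\phi_n(T)\in\cl M_n\bar\otimes\cl U$. Since the $\tilde\phi_n$ are contractive, Banach--Alaoglu yields a weak* convergent subnet $\tilde\phi_{n_\alpha}(T)\to S$; because $(\cl M_n)$ descends and is ternary, Lemma~\ref{l_intem} places $S$ in $(\cap_n\cl M_n)\bar\otimes\cl U\subseteq\cl B\bar\otimes\cl U\subseteq\cl A\bar\otimes\cl U$, and then $T-S=\lim_\alpha\tilde\phi_{n_\alpha}^{\perp}(T)$ is a weak* limit inside the closed subspace $\cl A\bar\otimes\cl U$, giving $T\in\cl A\bar\otimes\cl U$ and finishing the width-one case.

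For $\cl B$ of finite width $k\geq 2$, I would run the same scheme with the combined idempotents $\phi_n=\prod_j\phi_n^j$ (whose range $\cap_j\cl M_n^j$ is ternary, as an intersection of ternary spaces) and $\eta_n=\prod_j(\phi_n^j+\psi_n^j)$ (whose range contains $\cl B$ and descends to $\cl B$). The delicate point---and the main remaining obstacle---is that the off-diagonal identity $\phi_n^{\perp}(\ran\eta_n)\subseteq\cl B$, which in width one reduced to the trivial $\cl W_n\subseteq\cl B$, fails for $k\geq 2$; I expect this to be resolved by an induction on the width, applying the width-one argument on each corner block of $\cl M_n^j$ (which carries an induced nest algebra bimodule structure of width at most $k-1$ on smaller Hilbert spaces) and reassembling with Corollary~\ref{c_psmp}.
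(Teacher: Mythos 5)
Your width-one argument is correct, and it is essentially a streamlined version of what the paper does in that case: the ``substep'' is Lemma \ref{l_pre} in disguise, the iteration over $\psi_{n,1},\dots,\psi_{n,n},\phi_n$ reproduces the decomposition scheme, and the final intersection identity is handled exactly as in the paper, via a weak* cluster point of $(\tilde\phi_n(T))$ and Lemma \ref{l_intem}. For width one the proposal is sound.

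The general finite-width case, however, is a genuine gap --- and it is where all the substance of the theorem lies. You correctly identify that the inclusion $\phi_n^{\perp}(\ran\eta_n)\subseteq\cl B$ fails for $k\ge 2$ (an element that is strictly off-diagonal for the first nest but diagonal for the second lies in $\cl W_n^1\cap\cl M_n^2$, which need not be in $\cl B_2$), but the proposed repair does not work as described. First, restricting a nest algebra bimodule to a diagonal block of one of its finite subnests does \emph{not} lower its width: the trace of the full nest $\cl N_1$ on the block $(\nph(N_{i+1})-\nph(N_i))\cl B(H_1,H_2)(N_{i+1}-N_i)$ is still a nontrivial nest, so each corner carries an induced structure of width $k$, not $k-1$; the gain from passing to blocks only materialises in the limit $n\to\infty$ (where $\cap_n\cl M_n\subseteq\cl B$), which is a cluster-point phenomenon, not a width-reduction one. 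Second, Corollary \ref{c_psmp} concerns direct sums and matrix-pattern spaces and cannot reassemble the spaces $\overline{\cl V+\cl B}$, whose off-diagonal components are essential. The paper's actual induction (on the width of $\cl B$, adjoining one nest algebra bimodule $\cl A$ at a time and splitting $T=\tilde\phi_n(T)+\tilde\psi_n(T)+\tilde\theta_n(T)$ along the scheme of $\cl A$) requires two further ingredients that your proposal never engages with: the intersection formula $(\cl M\bar\otimes\cl U)\cap(\cl V\bar\otimes\cl U)=(\cl M\cap\cl V)\bar\otimes\cl U$ for ternary $\cl M$ (Lemma \ref{l_inmm}, which rests on Theorem \ref{th_trohed}), and the formula $\overline{\cl V+\cl M}\cap\overline{\cl V+\cl B}=\overline{\cl V+(\cl B\cap\cl M)}$ from \cite[Theorem 2.10]{eletod}, used to pull the cluster point $S$ back into $\overline{\cl V+(\cl B\cap\cl A)}\bar\otimes\cl U$. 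Without some substitute for these, the passage from width one to width $k$ remains unproved.
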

\begin{proof}
We use induction on the length $k$ of $\cl B$. If $k = 0$, that is, $\cl B = \{0\}$,
the statement is trivial. Suppose that it holds for masa-bimodules 
of length at most $k$, let $\cl B$ be a masa-bimodule of length $k$, and
let $\cl A$ be a nest algebra bimodule.
Let $(\phi_n,\psi_n,\cl M_n,\cl W_n)_{n\in \bb{N}}$ be a decomposition scheme for $\cl A$, and set
$\theta_n = \id - (\phi_n + \psi_n )$.

Suppose 
$T\in \cl F(\overline{\cl V + (\cl B\cap \cl A)}, \cl U)$ and 
write $T = \tilde{\phi}_n(T) + \tilde{\psi}_n(T) + \tilde{\theta}_n(T).$
Since $\cl A \subseteq \ran(\phi_n + \psi_n)$, we have that $\theta_n(\cl A) = \{0\}$ and 
hence, by Lemma \ref{l_invf}, 
$$\tilde{\theta}_n(T)\in \cl F(\cl V,\cl U) = \cl V \bar\otimes\cl U.$$
By Lemma \ref{l_invf} and the fact that 
$\psi_n(\cl B) = \ran\psi_n \cap \cl B$, we have 
\begin{eqnarray*}
\tilde{\psi}_n(T)
& \in & \cl F(\psi_n(\overline{\cl V + (\cl B\cap \cl A)}), \cl U)
= \cl F(\overline{\psi_n(\cl V) + \psi_n(\cl B\cap \cl A)}, \cl U)\\
& = & \cl F(\overline{\psi_n(\cl V) + \psi_n(\cl B)}, \cl U)
= \tilde{\psi}_n(\cl F(\overline{\cl V + \cl B}, \cl U))
= \tilde{\psi}_n(\overline{\cl V + \cl B}\bar\otimes \cl U)\\
& = & \psi_n(\overline{\cl V + \cl B})\bar\otimes \cl U
\subseteq \overline{\cl V + \psi_n(\cl B)}\bar\otimes \cl U)
\subseteq \overline{\cl V + (\cl B\cap\cl A)}\bar\otimes \cl U.
\end{eqnarray*}

On  the other hand, by Lemmas \ref{l_invf} and \ref{l_inmm} 
and the facts that $\phi_n(\cl V) = \cl V\cap \cl M_n$
and $\phi_n(\cl B) = \cl B\cap \cl M_n$, we have

\begin{eqnarray*}
\tilde{\phi}_n(T) & \in & \cl F(\overline{\phi_n(\cl V)  + (\phi_n( \cl B) \cap \cl M_n)}, \cl U)\\
& = &
(\overline{\cl V\cap \cl M_n + \cl B\cap \cl M_n}\bar\otimes\cl B(K_1,K_2))\cap
(\cl B(H_1,H_2)\bar\otimes\cl U)\\
& = &
(\overline{\cl V + \cl B}\cap \cl M_n\bar\otimes\cl B(K_1,K_2))\cap
(\cl B(H_1,H_2)\bar\otimes\cl U)\\
& = &
(\overline{\cl V + \cl B}\bar\otimes\cl B(K_1,K_2))\cap
(\cl M_n\bar\otimes\cl B(K_1,K_2))\cap
(\cl B(H_1,H_2)\bar\otimes\cl U).
\end{eqnarray*}

Let $S$ be a weak* cluster point of $(\tilde{\phi}_n(T))_{n\in \bb{N}}$ and set
$\cl M = \cap_{n\in \bb{N}}\cl M_n$. 
Using Lemmas \ref{l_inmm}, \ref{l_intem} and the inductive assumption, we have 
\begin{eqnarray*}
S & \in & (\overline{\cl V + \cl B}\bar\otimes\cl B(K_1,K_2))\cap
(\cl M\bar\otimes\cl B(H_1,H_2))\cap (\cl B(H_1,H_2)\bar\otimes\cl U)\\
& = & \cl F(\overline{\cl V + \cl B},\cl U) \cap \cl F(\cl M,\cl U)
=  (\cl M\bar\otimes\cl U)\cap (\overline{\cl V + \cl B}\bar\otimes \cl U)
=  (\cl M\cap\overline{\cl V + \cl B})\bar\otimes\cl U.
\end{eqnarray*}
Every ternary masa-bimodule has finite width (in fact, it is the
intersection of two nest algebra bimodules \cite{kt}) and 
hence, by \cite[Theorem 2.10]{eletod},
we have that
$$\cl M\cap\overline{\cl V + \cl B} \subseteq \overline{\cl V + \cl M}\cap\overline{\cl V + \cl B} =
\overline{\cl V + (\cl B\cap \cl M)} \subseteq \overline{\cl V + (\cl B\cap \cl A)}.$$
It follows that
$$S\in \overline{\cl V + (\cl B\cap \cl A)}\bar\otimes\cl U.$$
On the other hand, by the second paragraph of the proof,
$$T - S = \lim_{n\rightarrow\infty} (\tilde{\psi}_n(T) + \tilde{\theta}_n(T))
\in \overline{\cl V + (\cl B\cap \cl A)}\bar\otimes\cl U.$$
Hence,
$$T = S + (T - S) \in \overline{\cl V + (\cl B\cap \cl A)}\bar\otimes\cl U,$$ and the proof is complete.
\end{proof}

The next corollary is an immediate consequence of 
Theorem \ref{th_sump}. 
It extends the fact, established in \cite{hok}, that 
CSL algebras of finite width possess property $S_{\sigma}$.

\begin{corollary}\label{c_tm}
If $\cl B_i$, $i = 1,\dots,n$, are masa-bimodules of finite width, then
$\overline{\cl B_1 + \cdots + \cl B_n}$ has property $S_{\sigma}$.
\end{corollary}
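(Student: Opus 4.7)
The plan is to proceed by induction on $n$, with Theorem \ref{th_sump} doing essentially all of the work. For the base case $n=1$, I would apply Theorem \ref{th_sump} with $\cl V = \{0\}$ and $\cl B = \cl B_1$. The trivial subspace vacuously satisfies $\cl F(\{0\},\cl U) = \{0\} = \{0\}\bar\otimes\cl U$ for every weak* closed $\cl U \subseteq \cl B(K_1,K_2)$, so the hypothesis of the theorem is met; the conclusion then says that $\overline{\{0\} + \cl B_1} = \cl B_1$ possesses property $S_{\sigma}$.

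For the inductive step, I would suppose the result has been established for $n-1$ summands and set $\cl V = \overline{\cl B_1 + \cdots + \cl B_{n-1}}$. A brief verification is needed that $\cl V$ is itself a masa-bimodule, but this is routine: $\cl B_1 + \cdots + \cl B_{n-1}$ is a $\cl D_2, \cl D_1$-bimodule (the bimodule property is preserved under sums), and weak* closure preserves invariance under left and right multiplication by elements of $\cl D_2$ and $\cl D_1$, since these operations are weak*-continuous. By the inductive hypothesis, $\cl V$ has property $S_\sigma$, so applying Theorem \ref{th_sump} once more with this $\cl V$ and $\cl B = \cl B_n$ gives that $\overline{\cl V + \cl B_n} = \overline{\cl B_1 + \cdots + \cl B_n}$ possesses property $S_\sigma$, completing the induction.

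There is no real obstacle here; the entire content of the corollary is packaged inside Theorem \ref{th_sump}, and the only thing left to verify is the mild bookkeeping observation that partial weak* closed sums of masa-bimodules of finite width remain masa-bimodules, which is required in order to feed them back into the theorem as the $\cl V$-slot in the next inductive step.
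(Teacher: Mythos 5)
Your proof is correct and is essentially the argument the paper intends: the authors state the corollary as an immediate consequence of Theorem \ref{th_sump}, and the obvious way to make that precise is exactly your induction, seeding with $\cl V=\{0\}$ and feeding the weak* closed partial sums (which are indeed masa-bimodules) back into the theorem. No gaps.
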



\section{Intersections and spans}\label{s_insum}

In this section, we establish an intersection formula for weak* closures of 
spans of subspaces of the form $\cl B\bar\otimes\cl U$, where 
$\cl B$ is a masa-bimodule of finite width (see Theorem \ref{epsilon} and
Corollary \ref{epsilona}). This result will be used in Section \ref{s_appos} to 
study questions about operator synthesis. 

We fix masas $\cl D_1\subseteq \cl B(H_1)$ and 
$\cl D_2\subseteq \cl B(H_2)$. 
All Schur idempotents we consider are relative to the pair
$(\cl D_1,\cl D_2)$ and act on $\cl B(H_1,H_2)$.
We will say that a sequence $(\psi_n)_{n\in \bb{N}}$ 
of Schur idempotents is \emph{nested} if 
$\ran\psi_{n+1}\subseteq \ran\psi_n$ for all $n\in \bb{N}$. 
Before formulating the main result of this section, Theorem \ref{epsilon}, 
we state three propositions which will be needed in its proof. 
We first recall that the ranges of contractive Schur idempotents 
on $\cl B(H_1,H_2)$ are 
ternary masa-bimodules of the form $\oplus_k \cl B(E_k\cl H_1,F_k\cl H_2)$,
where $(E_k)_k\subseteq \cl D_1$ and $(F_k)_k\subseteq \cl D_2$ 
are families of mutually orthogonal
projections (see, {\it e.g.}, \cite{kp}).

\bigskip

\noindent {\bf Notation. } 
For the rest of this section, we let
$\cl B_i\subseteq \cl B(H_1,H_2)$ be a masa-bimodule of finite width,
$\cl U, \cl V, \cl U_i$ be weak* closed subspaces of $\cl B(K_1,K_2)$, $i = 1,\dots,r$,
and $\cl W = \sum_{i=1}^r \cl B_i\bar\otimes \cl U_i$.

\smallskip

\begin{proposition}\label{mazi}
Let $(\psi_i)_{i\in \mathbb{N}}$ be a nested sequence of contractive Schur idempotents.

(i) \ \ Let $(\phi _i)_{i\in \bb{N}}$ be a nested sequence of contractive Schur idempotents. 
Then the subspaces
$$\cap _{k,i}\overline{(\ran \psi_i\cap \ran \phi_k )\bar \otimes \cl U+\ran \phi_k \bar \otimes \cl V + \cl W} \ \mbox{ and } \ $$
$$\overline{((\cap _{i} \ran \psi _i)\cap(\cap_k  \ran \phi _k))\bar \otimes \cl U+(\cap _k\ran \phi_k) \bar \otimes \cl V + \cl W}$$
coincide.

(ii)  \ Let $\cl B$ be a nest algebra bimodule. Then
$$\cap _i \overline{ \ran \psi _i\bar \otimes \cl U + \cl B\bar \otimes \cl V + \cl W} = \overline{(\cap _i\ran \psi _i) \bar \otimes \cl U + \cl B\bar \otimes \cl V + \cl W}.$$

(iii)  Let $\cl B$ be a nest algebra bimodule. Then the subspaces
$$\cap _i \overline{\ran\psi_i\bar \otimes \cl U + (\ran \psi _i\cap \cl B)\bar \otimes \cl V + \cl W} \ \mbox{ and } \ $$
$$\overline{(\cap _i\ran\psi_i) \bar \otimes \cl U+((\cap _i\ran \psi _i)\cap \cl B)\bar \otimes \cl V + \cl W}$$ coincide.

(iv) \ Let $\cl B$ be a masa-bimodule of finite width. Then the subspaces
$$\cap _i \overline{\ran\psi_i\bar \otimes \cl U + (\ran \psi _i\cap \cl B)\bar \otimes \cl V + \cl W} \ \mbox{ and } \ $$
$$\overline{(\cap _i\ran\psi_i) \bar \otimes \cl U+((\cap _i\ran \psi _i)\cap \cl B)\bar \otimes \cl V + \cl W}$$ coincide.

(v) \ \
$\cap _i \overline{\ran\psi_i\bar \otimes \cl U + \cl W} =
\overline{(\cap _i\ran\psi_i) \bar \otimes \cl U + \cl W}.$
\end{proposition}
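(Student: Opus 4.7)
\medskip

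\noindent\textbf{Plan.} The inclusion $\supseteq$ is trivial in all five parts. For $\subseteq$, I will run a weak$^*$--cluster-point argument built on the observation that, since each $\cl B_i$ is a masa-bimodule, $\cl W$ is invariant under every Schur idempotent $\tilde\psi$ (Lemma \ref{l_pre}(i)). Combined with the fact that $\tilde\psi_n^\perp$ annihilates $\ran\psi_n\bar\otimes\cl U$, this lets me split any $T$ in one of the intersections into a ``horizontal'' piece $\tilde\psi_n(T)\in\ran\psi_n\bar\otimes\cl B(K_1,K_2)$ and a remainder $\tilde\psi_n^\perp(T)\in\overline{\cl W}$. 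I will first prove (v), the core case, and then indicate how (ii)--(iv) adapt it and how (i) reduces to (v) and (iv).

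\emph{Proof of (v).} Let $T\in\cap_i\overline{\ran\psi_i\bar\otimes\cl U+\cl W}$, set $\cl R_n=\ran\psi_n$ and $\cl R=\cap_n\cl R_n$. Since the ranges of contractive Schur idempotents are ternary masa-bimodules, $\cl R$ is itself ternary and so equals $\ran\psi_\infty$ for some contractive Schur idempotent $\psi_\infty$. For each $n$, $\tilde\psi_n^\perp(T)\in\overline{\cl W}$ and $\tilde\psi_n(T)\in\cl R_n\bar\otimes\cl B(K_1,K_2)$. Let $S$ be a weak$^*$ cluster point of $(\tilde\psi_n(T))_n$. Lemma \ref{l_intem} applied to the descending chain $(\cl R_n)$ of ternary masa-bimodules yields $S\in\cap_n(\cl R_n\bar\otimes\cl B(K_1,K_2))=\cl R\bar\otimes\cl B(K_1,K_2)$; the nesting yields $S\in\overline{\cl R_m\bar\otimes\cl U+\cl W}$ for every $m$; and $T-S\in\overline{\cl W}$. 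Since $\tilde\psi_\infty$ fixes $S$ (as $S\in\cl R\bar\otimes\cl B(K_1,K_2)$) and, by Lemma \ref{l_pre}(i), sends $\overline{\cl R_m\bar\otimes\cl U+\cl W}$ into $\overline{\cl R\bar\otimes\cl U+\cl W}$, we get $S=\tilde\psi_\infty(S)\in\overline{\cl R\bar\otimes\cl U+\cl W}$; hence $T=S+(T-S)\in\overline{\cl R\bar\otimes\cl U+\cl W}$.

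\emph{Extensions and part (i).} Part (ii) follows from (v) by absorbing $\cl B\bar\otimes\cl V$ into the $\cl W$-slot (a nest algebra bimodule in the masa-bimodule setting is a masa-bimodule of width one). Parts (iii) and (iv) are handled by the very same cluster-point argument, with the additional remark that, since $\cl B$ is a masa-bimodule, $\psi_\infty(\ran\psi_m\cap\cl B)\subseteq\cl R\cap\cl B$, so the coupled term $(\ran\psi_i\cap\cl B)\bar\otimes\cl V$ is sent by $\tilde\psi_\infty$ into $(\cl R\cap\cl B)\bar\otimes\cl V$. For (i), I apply (v) at each fixed $k$ to the nested sequence of ternary masa-bimodules $(\ran\psi_i\cap\ran\phi_k)_i$, with $\ran\phi_k\bar\otimes\cl V$ absorbed into the $\cl W$-slot, to reduce the double intersection to $\cap_k\overline{(\cl R\cap\ran\phi_k)\bar\otimes\cl U+\ran\phi_k\bar\otimes\cl V+\cl W}$; then I apply (iv) to the nested sequence $(\phi_k)_k$ with $\cl B=\cl R$ (ternary, hence of finite width), $\cl U$ and $\cl V$ swapped, to take the final intersection.

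The main obstacle is verifying that the cluster point $S$ lies in the correct weak$^*$-closed span after the application of $\tilde\psi_\infty$: this rests squarely on the hereditary $S_\sigma$ property of ternary masa-bimodules (Theorem \ref{th_trohed}, used via Lemma \ref{l_intem}) and on the invariance of $\cl B$, the $\cl B_i$ and the $\ran\psi_i$ under every Schur idempotent, which is exactly what the masa-bimodule hypothesis provides.
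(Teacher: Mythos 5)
There is a genuine gap, and it sits at the heart of your argument for (v): you assert that $\cl R=\cap_n\ran\psi_n$ ``equals $\ran\psi_\infty$ for some contractive Schur idempotent $\psi_\infty$'' because it is a ternary masa-bimodule. This is false in general. Ranges of contractive Schur idempotents are ternary masa-bimodules of the very special form $\oplus_k\cl B(E_kH_1,F_kH_2)$ with $(E_k)$, $(F_k)$ mutually orthogonal families of projections, whereas a general ternary masa-bimodule is only an \emph{intersection} of a descending sequence of such ranges. For a concrete counterexample, let $\psi_n(X)=\sum_{k=1}^{2^n}P_k^{(n)}XP_k^{(n)}$ with $P_k^{(n)}$ the dyadic-interval projections on $L^2(0,1)$: then $\cap_n\ran\psi_n$ is the continuous masa $\cl D$, which is not of the block form above and admits no bounded weak* continuous bimodule projection onto it. Since $\psi_\infty$ need not exist, your final step --- ``$S=\tilde\psi_\infty(S)\in\overline{\cl R\bar\otimes\cl U+\cl W}$'' --- has no justification, and the same device is what you lean on for (iii), (iv) and (i).

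This is not a removable technicality. What you actually need at that point is the implication
$$S\in\bigl(\cl R\bar\otimes\cl B(K_1,K_2)\bigr)\cap\Bigl(\cap_m\overline{\ran\psi_m\bar\otimes\cl U+\cl W}\Bigr)
\ \Longrightarrow\ S\in\overline{\cl R\bar\otimes\cl U+\cl W},$$
which for $\cl W=\{0\}$ does follow from Lemma \ref{l_intem}, but for $\cl W\neq\{0\}$ is essentially the statement being proved: it amounts to distributing an intersection with a ternary bimodule over the weak* closed sum, i.e.\ to Propositions \ref{gama}/\ref{delta} and part (v) itself. This is precisely why the paper does not prove (v) in isolation: it runs a simultaneous induction on the number $r$ of summands of $\cl W$ and on the widths of the bimodules involved, introducing decomposition schemes $(\phi_k,\theta_k,\cl M_k,\cl Z_k)$ for each nest algebra bimodule and cycling through (i)--(v), Propositions \ref{gama}, \ref{delta} and Theorem \ref{epsilon} in a fixed order (Lemmas \ref{diokyklos}--\ref{lepsilon2}). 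Your reductions of (ii) to (v) by ``absorbing $\cl B\bar\otimes\cl V$ into the $\cl W$-slot'' and of (i) to (v) and (iv) are sensible in spirit (the paper itself remarks that (ii) generalises (v)), but they only shift the burden onto the broken core argument; to repair the proof you would have to replace the fictitious $\psi_\infty$ by the inductive machinery, at which point you are reproducing the paper's proof.
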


We note that part (ii) of the previous proposition is more general than (v); 
however, for the purpose of its proof it will be convenient to formulate 
these statements separately.

\begin{proposition}\label{gama}
Let $\cl M $ be a ternary masa-bimodule and
$\cl C $ be a nest algebra bimodule. Then
$$\overline{ \cl M\bar \otimes \cl U + \cl W} \cap \overline{ \cl C\bar \otimes \cl U + \cl W}
= \overline{ (  \cl M\cap \cl C)\bar \otimes \cl U + \cl W}.$$
\end{proposition}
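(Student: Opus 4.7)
The $\supseteq$ inclusion is trivial, so the goal is to show $\subseteq$. The plan is to take $T$ in the left-hand intersection, fix a decomposition scheme $(\phi_n,\psi_n,\cl M_n,\cl W_n)_{n\in\bb{N}}$ for the nest algebra bimodule $\cl C$, set $\theta_n = \id - \phi_n - \psi_n$, and study the decomposition $T = \tilde\phi_n(T)+\tilde\psi_n(T)+\tilde\theta_n(T)$. Since every $\cl B_i$ is a masa-bimodule, $\cl W$ is invariant under each of the maps $\tilde\phi_n$, $\tilde\psi_n$, $\tilde\theta_n$, so Lemma \ref{l_pre}(i) will apply cleanly throughout.

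The two ``off-diagonal'' terms are straightforward. Because $\cl C\subseteq \cl M_n+\cl W_n$, the map $\theta_n$ annihilates $\cl C$; applying $\tilde\theta_n$ to a weak* approximant of $T$ taken from $\cl C\bar\otimes\cl U+\cl W$ forces $\tilde\theta_n(T)\in \overline{\cl W}$. Similarly $\psi_n(\cl M)\subseteq \cl M\cap\ran\psi_n\subseteq \cl M\cap\cl C$, so applying $\tilde\psi_n$ to an approximant of $T$ from $\cl M\bar\otimes\cl U+\cl W$ shows $\tilde\psi_n(T)\in \overline{(\cl M\cap\cl C)\bar\otimes\cl U+\cl W}$.

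The core of the argument is the diagonal term. The same technique yields $\tilde\phi_n(T)\in \overline{(\cl M\cap\cl M_n)\bar\otimes\cl U+\cl W}$, and since $\|\phi_n\|_{cb}\le 1$, the bounded sequence $(\tilde\phi_n(T))_n$ has a weak* cluster point $S$ which, because these closed sets are decreasing in $n$, lies in their intersection. To control this intersection I would use the hypothesis that $\cl M$ is ternary to write $\cl M=\bigcap_k\ran\rho_k$ with $(\rho_k)$ a nested sequence of contractive Schur idempotents; since the range of any contractive Schur idempotent has the form $\oplus_j \cl B(E_jH_1,F_jH_2)$, each $\ran\rho_k\cap \cl M_n$ is again the range of such an idempotent. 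For fixed $n$, Proposition \ref{mazi}(v) gives $\bigcap_k\overline{(\ran\rho_k\cap\cl M_n)\bar\otimes\cl U+\cl W}=\overline{(\cl M\cap\cl M_n)\bar\otimes\cl U+\cl W}$, after which Proposition \ref{mazi}(i) with $\cl V=\{0\}$ collapses the double intersection over $k,n$ to $\overline{(\cl M\cap\bigcap_n\cl M_n)\bar\otimes\cl U+\cl W}$, and this sits inside $\overline{(\cl M\cap\cl C)\bar\otimes\cl U+\cl W}$ since $\bigcap_n\cl M_n\subseteq \cl C$. Writing $T=S+(T-S)$, with $T-S$ a weak* limit of the sums $\tilde\psi_n(T)+\tilde\theta_n(T)$ from inside the target set, finishes the proof.

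The main obstacle is precisely this diagonal step: one must simultaneously manage the two nested families (the ternary decomposition of $\cl M$ and the decomposition scheme of $\cl C$) and verify that their pairwise intersections $\ran\rho_k\cap\cl M_n$ remain ranges of contractive Schur idempotents, so that Propositions \ref{mazi}(i) and \ref{mazi}(v) can be combined to produce the required bound.
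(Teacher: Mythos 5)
Your proof is correct, and its skeleton --- decompose $T$ via a decomposition scheme for $\cl C$, push the off-diagonal pieces directly into the target space, and control a weak* cluster point of the diagonal pieces via Proposition \ref{mazi} --- is the same as the paper's (Lemmas \ref{lgama} and \ref{lgama2}). There are two genuine differences. First, you use the three-way split $T=\tilde\phi_n(T)+\tilde\psi_n(T)+\tilde\theta_n(T)$ and dispose of $\tilde\theta_n(T)$ and $\tilde\psi_n(T)$ by the elementary observations $\theta_n(\cl C)=\{0\}$ and $\psi_n(\cl M)\subseteq\cl M\cap\ran\psi_n\subseteq\cl M\cap\cl C$; the paper instead works with $\tilde\psi_k^{\perp}$ (in its notation) and invokes Lemma \ref{l_o2} for this part, so your route is slightly more economical there. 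Second, for the diagonal term the paper pairs the two nested families along the diagonal: it places $\tilde\psi_k(X)$ in $\overline{(\cl M_k\cap\ran\psi_k)\bar\otimes\cl U+\cl W}$, a single nested sequence of ranges of contractive Schur idempotents, so that only Proposition \ref{mazi}(v) is needed. You keep the two indices separate and collapse the resulting double intersection with Proposition \ref{mazi}(i); your check that $\ran\rho_k\cap\cl M_n$ is again the range of a contractive Schur idempotent (namely of $\rho_k\phi_n$, using commutativity of the block projections) is correct and is exactly what makes (i) applicable. One caveat worth recording: Proposition \ref{gama} is proved in the paper by a simultaneous induction with Propositions \ref{mazi} and \ref{delta}, and in the paper's ordering Proposition \ref{mazi}(i) for a $\cl W$ with $r$ terms is only established \emph{after} Proposition \ref{gama} for $r$ terms. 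Your argument is nevertheless non-circular, because \ref{mazi}(i) for $r$ terms depends only on \ref{mazi}(v) for $r$ terms (Lemma \ref{diokyklos2}), which in turn uses \ref{gama} and \ref{delta} only for $r-1$ terms; so the induction can be reordered to accommodate your proof, but you should make that dependency check explicit if you present the argument.
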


\smallskip

\begin{proposition}\label{delta}
Let $\cl M $ be a ternary masa-bimodule and
$\cl C $ be a masa-bimodule of finite width.
Then
$$\overline{ \cl M\bar \otimes \cl U + \cl W}  \cap \overline{ \cl C\bar \otimes \cl U + \cl W}
= \overline{ (  \cl M\cap \cl C)\bar \otimes \cl U + \cl W}.$$
\end{proposition}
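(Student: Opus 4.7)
\textbf{Proof plan for Proposition \ref{delta}.}
The inclusion $\overline{(\cl M \cap \cl C)\bar\otimes\cl U + \cl W} \subseteq \overline{\cl M\bar\otimes\cl U + \cl W} \cap \overline{\cl C\bar\otimes\cl U + \cl W}$ is trivial. The strategy for the reverse inclusion is to approximate the ternary masa-bimodule $\cl M$ from outside by ranges of contractive Schur idempotents, use Proposition \ref{gama}-style reasoning levelwise via Lemma \ref{l_o2}, and finally pass to the intersection using Proposition \ref{mazi}(iv). Here $\cl C$ being of finite width enters only through the invocation of Proposition \ref{mazi}(iv).

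First I would use the fact, recalled in Section \ref{s_prel}, that every ternary masa-bimodule $\cl M$ may be written as $\cl M = \cap_{n\in\bb{N}} \ran\psi_n$ for some nested sequence $(\psi_n)_{n\in\bb{N}}$ of contractive Schur idempotents (relative to $(\cl D_1,\cl D_2)$). Fix $T \in \overline{\cl M\bar\otimes\cl U + \cl W} \cap \overline{\cl C\bar\otimes\cl U + \cl W}$. Since $\cl M \subseteq \ran\psi_n$, we have
$$T \in \overline{\ran\psi_n\bar\otimes\cl U + \cl W} \cap \overline{\cl C\bar\otimes\cl U + \cl W}$$
for every $n$. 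Now $\cl C$ is a masa-bimodule, hence invariant under $\psi_n$, while each $\cl B_i$ is a masa-bimodule, so Lemma \ref{l_pre}(i) gives $\tilde{\psi}_n(\cl B_i\bar\otimes\cl U_i) \subseteq \cl B_i\bar\otimes\cl U_i$ and therefore $\cl W$ is invariant under $\tilde{\psi}_n$. Lemma \ref{l_o2}, applied with $\phi = \psi_n$ and $\cl V = \cl C$, then yields
$$T \in \overline{(\ran\psi_n\cap\cl C)\bar\otimes\cl U + \cl W}, \qquad n\in\bb{N}.$$

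To finish, I would invoke Proposition \ref{mazi}(iv) with the auxiliary space in the role of its ``$\cl U$'' taken to be $\{0\}$, the role of its ``$\cl V$'' taken by our $\cl U$, and the role of its ``$\cl B$'' taken by $\cl C$ (which is allowed since $\cl C$ is of finite width). This gives
$$\bigcap_{n} \overline{(\ran\psi_n\cap\cl C)\bar\otimes\cl U + \cl W} = \overline{((\cap_n\ran\psi_n)\cap\cl C)\bar\otimes\cl U + \cl W} = \overline{(\cl M\cap\cl C)\bar\otimes\cl U + \cl W},$$
placing $T$ in the desired space.

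The main obstacle is Step 3, namely the passage from the levelwise statements to the intersection; this is exactly the content of Proposition \ref{mazi}(iv), whose proof is the genuine finite-width input. Everything before that step is essentially a reduction of the two-sided intersection with $\cl M$ to a sequence of intersections with single Schur-idempotent ranges, where Lemma \ref{l_o2} (which is a modest extension of Proposition \ref{gama} in the special case of an idempotent) suffices.
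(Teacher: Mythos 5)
Your proposal is correct, and it takes a genuinely different route from the paper's. The paper proves Proposition \ref{delta} by induction on the width $n$ of $\cl C$: the base case is Proposition \ref{gama} (the case where $\cl C$ is a single nest algebra bimodule), and the inductive step peels off one nest algebra bimodule $\cl C_n$ at a time via its decomposition scheme, splitting $X = \tilde{\psi}_k(X) + \tilde{\psi}_k^{\perp}(X)$ and combining Lemma \ref{l_o2}, Proposition \ref{mazi}~(v) and the width-inductive hypothesis. You instead leave $\cl C$ intact, approximate $\cl M$ from outside by the ranges $\ran\psi_n$, reduce each level to Lemma \ref{l_o2}, and absorb all of the finite-width combinatorics of $\cl C$ into a single invocation of Proposition \ref{mazi}~(iv) with its ``$\cl U$'' set to $\{0\}$ --- a specialization the standing hypotheses of that proposition permit. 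This is shorter, bypasses Proposition \ref{gama} entirely (indeed it subsumes it as the width-one case), and makes transparent that the only genuine input beyond Lemma \ref{l_o2} is Proposition \ref{mazi}~(iv). The one point you should make explicit is compatibility with the paper's simultaneous induction on the number $r$ of summands of $\cl W$: you use Proposition \ref{mazi}~(iv) with the \emph{same} number of summands as in the instance of Proposition \ref{delta} being proved, whereas the paper establishes \ref{mazi}~(iv) with $r$ summands only after \ref{delta} with $r$ summands. This causes no circularity, because in the paper \ref{mazi}~(iv) with $r$ summands depends only on \ref{mazi}~(iii) and hence on \ref{mazi}~(v) with $r$ summands, which in turn uses \ref{delta} with $r-1$ summands; so the levels can be reordered as $\ref{mazi}\mathrm{(v)}_r \Rightarrow \ref{mazi}\mathrm{(i)-(iv)}_r \Rightarrow \ref{delta}_r$ and the recursion remains well-founded. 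Two minor points of hygiene, both handled implicitly by the paper as well: replace $\cl W$ by its weak* closure before applying Lemma \ref{l_o2} (the lemma requires a weak* closed $\cl W$, and the closures of the relevant sums are unaffected), and note that $\overline{\cl W}$ is $\tilde{\psi}_n$-invariant because each $\cl B_i$ is a weak* closed masa-bimodule, hence invariant under every Schur map.
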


\smallskip

\begin{theorem}\label{epsilon}
Let $\cl C_j\subseteq \cl B(H_1,H_2)$ be a masa-bimodule of finite width,
$j = 1,\dots,m$. Then
$$\cap_{j=1}^m \overline{\cl C_j\bar \otimes \cl U + \cl W} =
\overline{(\cap_{j=1}^m\cl C_j)\bar \otimes \cl U + \cl W}.$$
\end{theorem}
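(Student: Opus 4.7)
The inclusion $\supseteq$ is immediate, so I would focus on $\subseteq$ and proceed by induction on $m$. The base case $m=1$ is trivial. For the inductive step, set $\cl C := \cap_{j=1}^{m-1}\cl C_j$; this is again a masa-bimodule of finite width (a finite intersection of finite intersections of nest algebra bimodules), so the inductive hypothesis yields
\begin{equation*}
\bigcap_{j=1}^{m-1} \overline{\cl C_j\bar\otimes \cl U + \cl W} = \overline{\cl C\bar\otimes \cl U + \cl W},
\end{equation*}
reducing the statement to the two-module case with $\cl C$ and $\cl C_m$ both of finite width.

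The heart of the argument is thus to generalize Proposition \ref{delta} from the ternary--finite-width setting to the finite-width--finite-width setting. Write $\cl C = \cl A_1\cap\cdots\cap\cl A_k$ with each $\cl A_i$ a nest algebra bimodule, and choose a decomposition scheme $(\phi_n,\psi_n,\cl M_n,\cl W_n)_{n\in\bb{N}}$ for $\cl A_1$. The crucial fact is that $\cl M_n = \ran\phi_n$ is ternary (it is the range of a contractive Schur idempotent), that $\cl W_n\subseteq\cl A_1\subseteq\cl W_n+\cl M_n$, and that $\cap_n\cl M_n\subseteq\cl A_1$. Given $T$ in the two-term intersection, decompose $T = \tilde\phi_n(T) + \tilde\psi_n(T) + \tilde\theta_n(T)$ with $\theta_n = \id - \phi_n - \psi_n$. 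Using Lemma \ref{l_invf}, the identities $\theta_n(\cl A_1) = \{0\}$ and $\psi_n(\cl A_1) = \cl W_n\cap\cl A_1\subseteq\cl A_1$, and the invariance of masa-bimodules under Schur idempotents — as in the proof of Theorem \ref{th_sump} — the pieces $\tilde\psi_n(T)$ and $\tilde\theta_n(T)$ can be placed inside $\overline{(\cl C\cap\cl C_m)\bar\otimes\cl U + \cl W}$ via a chain of inclusions analogous to those displayed there. The diagonal piece $\tilde\phi_n(T)$ lies in $\overline{\cl M_n\bar\otimes\cl U + \cl W}\cap\overline{\cl C_m\bar\otimes\cl U + \cl W}$; since $\cl M_n$ is ternary, Proposition \ref{delta} applies and places it in $\overline{(\cl M_n\cap\cl C_m)\bar\otimes\cl U + \cl W}$. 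Taking a weak* cluster point $S$ of $(\tilde\phi_n(T))_{n\in\bb{N}}$ and invoking Proposition \ref{mazi}(iv) (together with Lemmas \ref{l_intem} and \ref{l_inmm}) to move the intersection $\cap_n$ inside the weak* closure, one obtains
\begin{equation*}
S\in \overline{\bigl(\cap_n\cl M_n\cap\cl C_m\bigr)\bar\otimes\cl U + \cl W} \subseteq \overline{(\cl A_1\cap\cl C_m)\bar\otimes\cl U + \cl W}.
\end{equation*}
Iterating the same reduction through $\cl A_2,\dots,\cl A_k$, each time replacing a nest-algebra-bimodule factor by a ternary approximant obtained from its decomposition scheme, successively sharpens the conclusion until $T \in \overline{(\cl C\cap\cl C_m)\bar\otimes\cl U + \cl W}$, completing the induction.

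The main obstacle will be the bookkeeping of weak* cluster points across the nested limits in $n$ and across the successive components $\cl A_1,\dots,\cl A_k$: at each stage one must verify that the diagonal piece remains in a subspace where Proposition \ref{delta} can be applied, and that the resulting limits behave correctly under intersection. Proposition \ref{mazi}(iv) is tailored precisely to absorb such iterated intersections of ranges of nested Schur idempotents into weak* closures, so once the decomposition schemes for $\cl A_1,\dots,\cl A_k$ are aligned with the bimodules appearing in $\cl W$ and the invariance relations are tracked carefully, the argument should close.
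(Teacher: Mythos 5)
Your proposal is correct and follows essentially the same route as the paper's proof (Lemmas \ref{lepsilon} and \ref{lepsilon2}): an induction over nest-algebra-bimodule components, a split via a decomposition scheme into a ternary diagonal part handled by Proposition \ref{delta} and off-diagonal parts absorbed into one factor via Lemma \ref{l_o2}, and passage to weak* cluster points via Proposition \ref{mazi}. The paper streamlines the bookkeeping you worry about by first noting that one may assume without loss of generality that every $\cl C_j$ is itself a nest algebra bimodule, which collapses your inner iteration over $\cl A_1,\dots,\cl A_k$ into the single outer induction on $m$.
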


The proof of the above results will be given simultaneously, using induction on
the number $r$ of terms in the sum $\cl W =  \sum_{i=1}^r \cl B_i\bar\otimes \cl U_i$
and will be split into a number of lemmas.
The first series of steps, namely Lemmas \ref{diokyklos}--\ref{lepsilon},
provide the base of the induction. We will refer to the statements in Proposition \ref{mazi}
by their corresponding numbers (i) -- (v).
It will be convenient to assume that $\cl W = \{0\}$ when $r = 0$. 

Given the notation in Proposition \ref{mazi},  
throughout the proofs, we will set for brevity
$$\cl N = \cap_{i} \ran\psi_i \ \ \mbox{ and } \ \ \cl R = \cap_k \ran\phi_k.$$ 
The proofs of the lemmas in this section will all use 
the following idea:
Let $\Omega$ be a weak* closed subspace of operators 
and $(\rho_n)_{n\in \bb{N}}$ be a nested sequence of contractive idempotents
with $\cap_{n=1}^{\infty}\ran\rho_n \subseteq \Omega$.
In order to prove that a certain operator 
$T$ belongs to $\Omega$, it suffices to show that 
$\rho_n^{\perp}(T)\in \Omega$ for each $n\in \bb{N}$.
Indeed, letting $S$ be a weak* cluster point of
the sequence $(\rho_n(T))_{n\in \bb{N}}$, we have that 
$S\in \cap_{n=1}^{\infty}\ran\rho_n \subseteq \Omega$. On the other hand, the 
identities $T = \rho_n(T) + \rho_n^{\perp}(T)$, $n\in \bb{N}$, 
show that $T-S$ is a weak* cluster point of 
the sequence $(\rho_n^{\perp}(T))_{n\in \bb{N}}$, and hence 
it belongs to $\Omega$; therefore, 
$T = S + (T-S) \in \Omega$.

\medskip

\begin{lemma}\label{diokyklos}
Proposition \ref{mazi} (i) holds if $r = 0$.
\end{lemma}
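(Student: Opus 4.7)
The trivial inclusion $\supseteq$ is immediate from $\cl N\cap\cl R\subseteq\ran\psi_i\cap\ran\phi_k$ and $\cl R\subseteq\ran\phi_k$. The plan for the reverse inclusion is a two-stage weak*-cluster-point argument: compress the inner factor first through the $\phi_k$'s and then through the $\psi_i$'s, in each stage reducing to Lemmas \ref{l_intem} and \ref{l_inphi} through a single auxiliary identity on ternary masa-bimodules.

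The auxiliary identity, which I use twice, is the following: for every descending sequence $(\cl T_\alpha)$ of ternary masa-bimodules with intersection $\cl T$, and any weak* closed $\cl U,\cl V\subseteq\cl B(K_1,K_2)$,
\[
\bigcap_\alpha \overline{\cl T_\alpha\bar\otimes\cl U + \cl T_\alpha\bar\otimes\cl V} = \overline{\cl T\bar\otimes\cl U + \cl T\bar\otimes\cl V}. \qquad (\ast)
\]
Its proof is short: the elementary identity $\overline{\cl T_\alpha\bar\otimes\cl U + \cl T_\alpha\bar\otimes\cl V}=\cl T_\alpha\bar\otimes\overline{\cl U+\cl V}$ holds because $t\otimes(u+v)=t\otimes u+t\otimes v$ on simple tensors and the map $z\mapsto t\otimes z$ is weak* continuous; once this is in hand, $(\ast)$ follows from Lemma \ref{l_intem} applied to the descending family of ternary masa-bimodules $(\cl T_\alpha)$ with intersection $\cl T$ (taking the weak* closed subspace to be $\overline{\cl U+\cl V}$), together with the same identity applied to $\cl T$.

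Given $T$ in the left-hand intersection, the first stage is as follows. The subspace being intersected is contained in $\ran\phi_k\bar\otimes\cl B(K_1,K_2)$ for every $k$, so Lemma \ref{l_intem} yields $T\in\cl R\bar\otimes\cl B(K_1,K_2)$. For each fixed $i$, split $T=\tilde\psi_i(T)+\tilde\psi_i^\perp(T)$ and apply Lemma \ref{l_inphi}(i). Using that contractive Schur idempotents commute (so $\psi_i(\ran\phi_k)=\ran\psi_i\cap\ran\phi_k$) and that $\psi_i^\perp$ annihilates $\ran\psi_i\cap\ran\phi_k$, one obtains on the one hand $\tilde\psi_i^\perp(T)\in\ran\phi_k\bar\otimes\cl V$ for every $k$, whence Lemma \ref{l_intem} gives $\tilde\psi_i^\perp(T)\in\cl R\bar\otimes\cl V$; and on the other, $\tilde\psi_i(T)\in\overline{(\ran\psi_i\cap\ran\phi_k)\bar\otimes\cl U+(\ran\psi_i\cap\ran\phi_k)\bar\otimes\cl V}$ for every $k$. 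A first application of $(\ast)$ to the descending family $(\ran\psi_i\cap\ran\phi_k)_k$ then places $\tilde\psi_i(T)$ in $\overline{(\ran\psi_i\cap\cl R)\bar\otimes\cl U+(\ran\psi_i\cap\cl R)\bar\otimes\cl V}$.

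For the second stage, let $S$ be a weak* cluster point of the bounded sequence $(\tilde\psi_i(T))_i$. For every $j$ the tail $(\tilde\psi_i(T))_{i\geq j}$ lies in the weak* closed subspace $\overline{(\ran\psi_j\cap\cl R)\bar\otimes\cl U+(\ran\psi_j\cap\cl R)\bar\otimes\cl V}$, so $S$ does as well; a second application of $(\ast)$, now to $(\ran\psi_j\cap\cl R)_j$, puts $S$ in $\overline{(\cl N\cap\cl R)\bar\otimes\cl U+(\cl N\cap\cl R)\bar\otimes\cl V}$. Meanwhile $T-S$ is a cluster point of $(\tilde\psi_i^\perp(T))_i\subseteq\cl R\bar\otimes\cl V$ and hence itself lies in $\cl R\bar\otimes\cl V$. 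Adding gives $T\in\overline{(\cl N\cap\cl R)\bar\otimes\cl U+\cl R\bar\otimes\cl V}$, as required. The chief obstacle is orchestrating the two descending indices so that $(\ast)$ may be invoked on each in turn; no single deep estimate is needed, but the nesting $\ran\psi_j\cap\cl R$ in the second stage is only produced after the first stage has removed the $\phi_k$-dependence on the $\cl U$-part.
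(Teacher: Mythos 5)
Your proof is correct and takes essentially the same route as the paper's: split $T$ as $\tilde{\psi}_i(T)+\tilde{\psi}_i^{\perp}(T)$, use Lemma \ref{l_pre} to place $\tilde{\psi}_i^{\perp}(T)$ in $\cl R\bar\otimes\cl V$ and $\tilde{\psi}_i(T)$ in a single ternary range tensored with $\overline{\cl U+\cl V}$, then pass to a weak* cluster point via Lemma \ref{l_intem}. The only cosmetic difference is that the paper evaluates along the diagonal $k=i$ and invokes Lemma \ref{l_intem} once on $(\ran\psi_i\cap\ran\phi_i)_i$, whereas you intersect over $k$ first and then over $i$; your identity $(\ast)$ is exactly the inline computation $\overline{\cl T\bar\otimes\cl U+\cl T\bar\otimes\cl V}=\cl T\bar\otimes\overline{\cl U+\cl V}$ that the paper performs.
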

\begin{proof}
Let 
$$\Omega = \overline{(\cl N\cap \cl R)\bar \otimes \cl U + \cl R \bar \otimes \cl V}$$ 
and fix
$$X\in \cap _{k,i} \overline{(\ran \psi _i\cap \ran \phi_k )\bar \otimes \cl U+\ran \phi_k \bar \otimes \cl V}.$$
By Lemma \ref{l_pre}, $\tilde{\psi}_i^\bot (X)\in \ran \phi _k\bar \otimes \cl V$ for all $k,i.$ By Lemma \ref{l_intem},
$\tilde{\psi}_i^\bot (X)\in \cl R\bar \otimes \cl V\subseteq \Omega$, $i\in \bb{N}$.
On the other hand, for all $i\in \bb{N}$, we have, by Lemma \ref{l_pre},
\begin{eqnarray*}
\tilde{\psi_ i}(X)
& \in & 
\overline{ (\ran \psi _i\cap \ran \phi_i) \bar \otimes \cl U
+ (\ran \psi _i\cap \ran \phi_i) \bar \otimes \cl V}\\
& = & 
(\ran \psi _i\cap \ran \phi_i) \bar \otimes (\overline{\cl U+\cl V}).
\end{eqnarray*}
By Lemma \ref{l_intem},
any weak* cluster point $S$ of the sequence 
$(\tilde{\psi}_i (X))_{i\in \bb{N}}$ belongs to 
$(\cl N\cap \cl R)\bar \otimes (\overline{\cl U+\cl V})$,
a subset of $\Omega$. Thus, $X = (X-S) + S\in \Omega$.
\end{proof}

\begin{lemma}\label{triakyklos}
Proposition \ref{mazi} (ii) holds if $r = 0$.
\end{lemma}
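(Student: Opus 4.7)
I only need to prove the inclusion $\subseteq$; the reverse is immediate since $\cap_j \ran\psi_j \subseteq \ran\psi_i$ for every $i$. Write $\cl N = \cap_i \ran\psi_i$ and $\Omega = \overline{\cl N \bar\otimes \cl U + \cl B \bar\otimes \cl V}$. Fix a decomposition scheme $(\alpha_n,\beta_n,\cl M_n,\cl W_n)_{n\in\bb{N}}$ for the nest algebra bimodule $\cl B$ (renaming the author's $\phi_n,\psi_n$ to avoid conflict with the $\psi_i$ of the statement), and set $\theta_n = \id - \alpha_n - \beta_n$; because $\alpha_n\beta_n = \beta_n\alpha_n = 0$, the sum $\alpha_n+\beta_n$ is a Schur idempotent, and hence so is $\theta_n$. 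For $X$ in the left-hand side I decompose $X = \tilde\alpha_n(X) + \tilde\beta_n(X) + \tilde\theta_n(X)$ and exploit throughout the commutativity of any two Schur idempotents (so that, e.g., $\alpha_n(\ran\psi_i) = \ran\alpha_n\cap\ran\psi_i$).

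For $\tilde\theta_n(X)$, the inclusion $\cl B\subseteq \cl M_n + \cl W_n$ gives $\theta_n(\cl B) = \{0\}$, so Lemma \ref{l_pre}(i) yields $\tilde\theta_n(X)\in (\ran\theta_n\cap\ran\psi_i)\bar\otimes\cl U$ for every $i$; Lemma \ref{l_intem} then forces $\tilde\theta_n(X)\in (\ran\theta_n\cap\cl N)\bar\otimes\cl U\subseteq\Omega$. For $\tilde\beta_n(X)$, using $\beta_n(\cl B) = \cl W_n$ (since $\cl W_n\subseteq\cl B$), Lemma \ref{l_pre}(i) gives $\tilde\beta_n(X)\in\overline{(\cl W_n\cap\ran\psi_i)\bar\otimes\cl U + \cl W_n\bar\otimes\cl V}$ for every $i$; intersecting over $i$ and using Lemma \ref{diokyklos} with the nested sequence $(\psi_i)$ and the constant sequence $\phi_k\equiv\beta_n$ collapses this to $\overline{(\cl W_n\cap\cl N)\bar\otimes\cl U + \cl W_n\bar\otimes\cl V}\subseteq\Omega$.

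The main point is $\tilde\alpha_n(X)$. Since $\alpha_n(\cl B) = \cl M_n\cap\cl B\subseteq\cl M_n$, Lemma \ref{l_pre}(i) produces
$$\tilde\alpha_n(X)\in \overline{(\cl M_n\cap\ran\psi_i)\bar\otimes\cl U + \cl M_n\bar\otimes\cl V}$$
for every $n$ and $i$. Let $E_n$ denote the intersection of these sets over $i$; the sets $E_n$ decrease in $n$ because $\cl M_n$ does, so any weak* cluster point $S$ of $(\tilde\alpha_n(X))_n$ lies in $\cap_n E_n$. But $\cap_n E_n$ is exactly the left-hand side of Proposition \ref{mazi}(i) applied to the two nested sequences $(\psi_i)$ and $(\alpha_n)$, so Lemma \ref{diokyklos} identifies it as $\overline{(\cl N\cap\cl M)\bar\otimes\cl U + \cl M\bar\otimes\cl V}$, where $\cl M := \cap_n\cl M_n\subseteq\cl B$; in particular $S\in\Omega$. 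Since $X - \tilde\alpha_n(X) = \tilde\beta_n(X) + \tilde\theta_n(X)\in\Omega$ for each $n$ and $\Omega$ is weak* closed, the corresponding cluster point $X-S$ also lies in $\Omega$, whence $X = S + (X-S)\in\Omega$.

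The critical step is the analysis of $\tilde\alpha_n(X)$: the decomposition scheme produces the ternary masa-bimodule $\cl M = \cap_n\cl M_n$ sitting inside $\cl B$, and the already-established base case (i) of Lemma \ref{diokyklos} is exactly what is needed to collapse the double intersection over $n$ and $i$ into a single weak* closed sum whose ``$\cl M$-part'' is then absorbed into $\cl B$. The other two summands are under good control, because the defining inclusion $\cl W_n\subseteq\cl B\subseteq\cl W_n+\cl M_n$ built into the decomposition scheme collapses them to objects already sitting in $\Omega$.
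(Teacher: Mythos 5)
Your overall architecture coincides with the paper's: decompose $X$ along a decomposition scheme for $\cl B$, control the block-diagonal part $\tilde\alpha_n(X)$ by passing to a weak* cluster point and invoking Lemma \ref{diokyklos} for the two nested sequences $(\psi_i)$ and $(\alpha_n)$, and absorb the remaining summands into $\Omega$ directly; the treatment of $\tilde\alpha_n(X)$ and the final cluster-point bookkeeping are correct. The gap is in the $\tilde\beta_n(X)$ term. You apply Lemma \ref{diokyklos} with the constant sequence $\phi_k\equiv\beta_n$, but $\beta_n$ is not a \emph{contractive} Schur idempotent: it is the strictly upper-triangular truncation associated with the finite nest $\cl F_n$, whose norm grows like $\log n$ (this is precisely why Section \ref{s_prel} records the decomposition $\beta_n=\sum_{p=1}^n\beta_{n,p}$ with $\|\beta_{n,p}\|=1$). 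Worse, $\ran\beta_n=\cl W_n$ is not a ternary masa-bimodule once there are at least three blocks (operators $T,S,R$ supported on the strictly upper-triangular blocks $(1,2)$, $(0,2)$, $(0,1)$ give $TS^*R$ supported on the diagonal block $(1,1)$), and Lemma \ref{diokyklos} rests on Lemma \ref{l_intem}, hence on Theorem \ref{th_trohed}, which require ternary ranges. So the hypotheses of Lemma \ref{diokyklos} fail for your constant sequence, and the collapse of $\cap_i\overline{(\cl W_n\cap\ran\psi_i)\bar\otimes\cl U+\cl W_n\bar\otimes\cl V}$ to $\overline{(\cl W_n\cap\cl N)\bar\otimes\cl U+\cl W_n\bar\otimes\cl V}$ is not justified as stated.

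The repair is exactly what the paper does: write $\beta_n=\sum_{p=1}^n\beta_{n,p}$, where $\beta_{n,p}$ is the single-diagonal Schur idempotent. Each $\beta_{n,p}$ is contractive, its range is a ternary masa-bimodule contained in $\cl B$, and Lemma \ref{diokyklos} applied with the constant sequence $\beta_{n,p}$ gives $\tilde\beta_{n,p}(X)\in\overline{\cl N\bar\otimes\cl U+\ran\beta_{n,p}\bar\otimes\cl V}\subseteq\Omega$; summing over $p=1,\dots,n$ recovers $\tilde\beta_n(X)\in\Omega$. The same ternarity objection applies to your use of Lemma \ref{l_intem} on the sequence $(\ran\theta_n\cap\ran\psi_i)_i$ in the $\tilde\theta_n(X)$ step, but there it is harmless: you only need $\tilde\theta_n(X)\in\ran\psi_i\bar\otimes\cl U$ for all $i$, and Lemma \ref{l_intem} applied to the ternary descending sequence $(\ran\psi_i)_i$ already yields $\tilde\theta_n(X)\in\cl N\bar\otimes\cl U\subseteq\Omega$.
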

\begin{proof}
Let 
$\Omega = \overline{\cl N\bar \otimes \cl U + \cl B\bar \otimes \cl V}$ and fix
$$X\in \cap _i \overline{\ran \psi _i\bar \otimes \cl U+\cl B\bar \otimes \cl V}.$$
Let $(\phi_k,\theta_k,\cl M_k,\cl Z_k)_{k\in \bb{N}}$ be a decomposition scheme for $\cl B$. 
By Lemma \ref{l_pre}, 
$$ \tilde{\phi}_k(X) \in \cap _i \overline{(\ran \psi _i\cap \cl M_k)\bar \otimes \cl U+
\cl M_k\bar \otimes \cl V}.$$
If $S$ is a weak* cluster point of the sequence $(\tilde{\phi}_k(X))_{k\in \bb{N}}$, 
then 
$$S\in \cap _{k, i} \overline{(\ran \psi _i\cap \cl M_k)\bar \otimes \cl U+
\cl M_k\bar \otimes \cl V}.$$  
By Lemma \ref{diokyklos}, $S\in  \Omega .$ It hence suffices to prove
that $ \tilde{\phi}_k^\bot (X) \in \Omega $ for all $k\in \bb{N}$. Observe that
$$\tilde{\phi}_k^\bot (X)\in \cap _i\overline{ \ran \psi _i \bar \otimes \cl U+\cl Z_k\bar \otimes \cl V  }.$$
Using Lemmas \ref{l_pre} and \ref{l_intem}, we see that
$$ \tilde{\theta}_k^\bot (\tilde{\phi}_k^\bot (X))\in  \cap _i(\ran \psi _i\bar \otimes \cl U) =
\cl N \bar \otimes \cl U\subseteq \Omega.$$ 
Write $\theta_k = \sum_{p=1}^k \theta_{k,p}$, where $\theta_{k,p}$ is a 
contractive Schur idempotent whose range is contained in $\cl B$
(see the last paragraph of Section \ref{s_prel}). Then 
$$ \tilde{\theta}_{k,p}(\tilde{\phi}_k^\bot (X))\in
\cap _i\overline{(\ran \psi _i \cap \ran\theta_{k,p} )\bar \otimes \cl U+
\ran\theta_{k,p} \bar \otimes \cl V}.$$ 
By Lemma \ref{diokyklos},
$$\tilde{\theta}_{k,p}(\tilde{\phi}_k^\bot (X))\in \overline{\cl N \bar \otimes 
\cl U + \ran\theta_{k,p} \bar \otimes \cl V}\subseteq \Omega.$$
Hence, $\tilde{\phi}_k^{\bot}(X) = \tilde{\theta}_k^{\bot}(\tilde{\phi}_k^{\bot}(X)) + \sum_{p=1}^k \tilde{\theta}_{k,p}(\tilde{\phi}_k^{\bot}(X))\in \Omega$
for all $k\in \bb{N}$ and the proof is complete.
\end{proof}

\begin{lemma}\label{alphakyklos}
Proposition \ref{mazi} (iii) holds if $r = 0$.
\end{lemma}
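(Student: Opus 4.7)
The plan is to emulate the decomposition-scheme arguments of Lemmas \ref{diokyklos} and \ref{triakyklos}, using a decomposition scheme $(\phi_k,\theta_k,\cl M_k,\cl Z_k)_{k\in\bb{N}}$ for $\cl B$ to resolve the complication that the second tensor factor now involves the cut-down $\ran\psi_i\cap\cl B$ rather than $\cl B$ itself. Fix $X$ in the LHS, let $\Omega$ denote the RHS, and set $\rho_k=\phi_k+\theta_k$, an idempotent (since $\phi_k\theta_k=0$) whose range $\cl M_k+\cl Z_k$ contains $\cl B$. Decompose $X=\tilde{\phi}_k(X)+\tilde{\theta}_k(X)+\tilde{\rho}_k^{\perp}(X)$ for each $k$.

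I would dispose of the two easy pieces first. For $\tilde{\theta}_k(X)$: since $\ran\theta_k\subseteq\cl B$ and Schur idempotents commute, $\theta_k(\ran\psi_i)=\ran\psi_i\cap\ran\theta_k\subseteq\ran\psi_i\cap\cl B$, so Lemma \ref{l_pre} forces
$$\tilde{\theta}_k(X)\in\cap_i\,(\ran\psi_i\cap\ran\theta_k)\bar\otimes\overline{\cl U+\cl V},$$
and Lemma \ref{l_intem} collapses this to $(\cl N\cap\ran\theta_k)\bar\otimes\overline{\cl U+\cl V}\subseteq(\cl N\cap\cl B)\bar\otimes\overline{\cl U+\cl V}\subseteq\Omega$. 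For $\tilde{\rho}_k^{\perp}(X)$: since $\rho_k^{\perp}$ annihilates $\cl B$ (because $\cl B\subseteq\ran\rho_k$), the second summand of each LHS term is killed, leaving $\tilde{\rho}_k^{\perp}(X)\in\cap_i(\ran\psi_i\cap\ran\rho_k^{\perp})\bar\otimes\cl U=(\cl N\cap\ran\rho_k^{\perp})\bar\otimes\cl U\subseteq\cl N\bar\otimes\cl U\subseteq\Omega$ by another application of Lemma \ref{l_intem}.

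The heart of the argument is the piece $\tilde{\phi}_k(X)$. By Lemma \ref{l_pre} and the scheme identity $\phi_k(\cl B)=\cl B\cap\cl M_k$,
$$\tilde{\phi}_k(X)\in\cap_i\overline{(\ran\psi_i\cap\cl M_k)\bar\otimes\cl U+(\ran\psi_i\cap\cl B\cap\cl M_k)\bar\otimes\cl V}.$$
Relaxing $\ran\psi_i\cap\cl B\cap\cl M_k$ to $\ran\psi_i\cap\cl M_k$ in the second tensor factor and invoking Lemma \ref{l_intem} yields $\tilde{\phi}_k(X)\in(\cl N\cap\cl M_k)\bar\otimes\overline{\cl U+\cl V}$. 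The sequence $(\cl M_k)_k$ being descending, any weak* cluster point $S$ of $(\tilde{\phi}_k(X))_k$ lies in $\cap_k(\cl N\cap\cl M_k)\bar\otimes\overline{\cl U+\cl V}$, which equals $(\cl N\cap\cap_k\cl M_k)\bar\otimes\overline{\cl U+\cl V}$ by Lemma \ref{l_intem} once more, and the fundamental inclusion $\cap_k\cl M_k\subseteq\cl B$ of the decomposition scheme produces $S\in(\cl N\cap\cl B)\bar\otimes\overline{\cl U+\cl V}\subseteq\Omega$.

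Finally, passing to the subnet along which $\tilde{\phi}_{k_\alpha}(X)\to S$, the difference $X-S$ is a weak* cluster point of $(\tilde{\theta}_{k_\alpha}(X)+\tilde{\rho}_{k_\alpha}^{\perp}(X))_\alpha$, every term of which lies in the weak* closed set $\Omega$; hence $X-S\in\Omega$ and $X=S+(X-S)\in\Omega$. The main obstacle I would anticipate is the $\tilde{\phi}_k(X)$ step: one cannot keep track of the tighter bound $\ran\psi_i\cap\cl B\cap\cl M_k$ on the $\cl V$-factor after taking the intersection over $i$, and the only way through is to broaden to $\overline{\cl U+\cl V}$ as the tensor factor, then reabsorb using $\cap_k\cl M_k\subseteq\cl B$ so that the first factor lands in $\cl N\cap\cl B$.
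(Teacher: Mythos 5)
Your decomposition $X=\tilde{\phi}_k(X)+\tilde{\theta}_k(X)+\tilde{\rho}_k^{\perp}(X)$ is exactly the paper's (the paper writes $X=\tilde{\phi}_k(X)+\tilde{\phi}_k^{\perp}(X)$ and then splits $\tilde{\phi}_k^{\perp}(X)$ into its $\tilde{\theta}_k$ and $\tilde{\theta}_k^{\perp}$ parts, which coincide with your three pieces because $\phi_k\theta_k=0$), and your treatment of $\tilde{\phi}_k(X)$ and of the cluster point $S$, including the reabsorption via $\cap_k\cl M_k\subseteq\cl B$, matches the paper's argument.

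There is, however, a justification gap in the two ``easy'' pieces: you invoke Lemma \ref{l_intem} for the descending sequences $(\ran\psi_i\cap\ran\theta_k)_i$ and $(\ran\psi_i\cap\ran\rho_k^{\perp})_i$, but that lemma requires the terms to be \emph{ternary} masa-bimodules, and neither $\ran\theta_k=\cl Z_k$ nor $\ran\rho_k^{\perp}$ is one: $\theta_k$ and $\rho_k^{\perp}$ are not contractive, and a strictly block-upper-triangular pattern is not a TRO (with block matrix units, $E_{12}E_{02}^{*}E_{01}=E_{11}$ lands on the diagonal); taking $\psi_i=\id$ shows the intersected spaces inherit this defect. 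By contrast $\ran\psi_i\cap\cl M_k$ \emph{is} ternary, being the intersection of two TROs, so your use of Lemma \ref{l_intem} in the $\tilde{\phi}_k(X)$ step is legitimate. The two problematic applications are repairable with tools you already have, and the repair is precisely the detour the paper takes: for the $\tilde{\rho}_k^{\perp}$ piece simply discard the factor $\ran\rho_k^{\perp}$ and use $\cap_i(\ran\psi_i\bar\otimes\cl U)=\cl N\bar\otimes\cl U$; for the $\tilde{\theta}_k$ piece first relax to $\left(\cap_i\left(\ran\psi_i\bar\otimes\overline{\cl U+\cl V}\right)\right)\cap\left(\cl Z_k\bar\otimes\overline{\cl U+\cl V}\right)$, apply Lemma \ref{l_intem} to $(\ran\psi_i)_i$ alone, and then identify $(\cl N\bar\otimes\overline{\cl U+\cl V})\cap(\cl Z_k\bar\otimes\overline{\cl U+\cl V})$ with $(\cl N\cap\cl Z_k)\bar\otimes\overline{\cl U+\cl V}$ by Lemma \ref{l_pre}(ii), using that $\cl N$ is a masa-bimodule and hence $\theta_k$-invariant. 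With these corrections your argument is the paper's proof.
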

\begin{proof}
Let 
$\Omega = \overline{\cl N  \bar \otimes \cl U+(\cl N \cap \cl B)\bar \otimes \cl V}$
and fix
$$X\in \cap _i \overline{\ran \psi _i\bar \otimes \cl U+(\ran \psi _i\cap \cl B)\bar \otimes \cl V}.$$
Let $(\phi_k,\theta_k,\cl M_k,\cl Z_k)_{k\in \bb{N}}$ be a decomposition scheme for $\cl B$.
By Lemma \ref{l_pre}, 
$$\tilde{\phi}_k^\bot (X)\in \cap _i\overline{\ran \psi _i\bar \otimes \cl U+(\ran \psi _i\cap \cl Z_k)
\bar \otimes \cl V }.$$ 
By Lemmas  \ref{l_pre} and  \ref{l_intem},
$$\tilde{ \theta}_k^\bot ( \tilde{\phi}_k^\bot (X) )\in 
\cap _i( \ran \psi _i \bar \otimes \cl U)
= \cl N \bar \otimes \cl U \subseteq \Omega,$$ 
and by Lemmas \ref{l_pre}, \ref{l_o2} and \ref{l_intem},
\begin{eqnarray*}
\tilde{\theta}_k (\tilde{\phi}_k^\bot (X)) & \in  & 
\cap_i\left((\ran \psi _i \cap \cl Z_k)\bar \otimes \overline{\cl U+\cl V}\right)\\
& \subseteq & \left(\cap_i\left(\ran \psi _i\bar \otimes \overline{\cl U+\cl V}\right)\right)\cap
\left(\cl Z_k \bar \otimes \overline{\cl U +\cl V}\right)\\
& = & 
(\cl N\bar \otimes \overline{\cl U+\cl V}) \cap (\cl Z_k\bar \otimes \overline{\cl U+\cl V})
= (\cl N\cap \cl Z_k)\bar \otimes \overline{\cl U+\cl V}\subseteq \Omega; 
\end{eqnarray*}
thus, 
$ \tilde{\phi}_k^\bot (X) \in \Omega.$ 
On the other hand, by Lemma \ref{l_pre},
$$\tilde{\phi}_k (X) \in (\ran \psi _k\cap \ran \phi _k)\bar \otimes \overline{\cl U+\cl V}, 
\ \ \ k\in \bb{N}.$$ 
Therefore, if $S$ is a weak* cluster point of the sequence 
$(\tilde{\phi}_k (X))_{k\in \bb{N}}$,
then, by Lemma \ref{l_intem}, 
$$S  \in (\cl N \cap \cl R)\bar \otimes \overline{\cl U+\cl V}\subseteq \Omega .$$
The proof is complete.
\end{proof}

\begin{lemma}\label{vitakyklos}
Proposition \ref{mazi} (iv) holds if $r = 0$.
\end{lemma}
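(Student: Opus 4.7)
The plan is to proceed by induction on the width $m$ of $\cl B$; the case $m=1$ is Lemma~\ref{alphakyklos}. For the inductive step I will write $\cl B = \cl B' \cap \cl A$ with $\cl B'$ of width $m-1$ and $\cl A$ a nest algebra bimodule, fix a decomposition scheme $(\phi_n,\theta_n,\cl M_n,\cl Z_n)_{n\in\bb{N}}$ for $\cl A$, and introduce $\rho_n := \id - \phi_n - \theta_n$, which is itself a Schur idempotent (because $\phi_n\theta_n = \theta_n\phi_n = 0$) and which satisfies $\rho_n(\cl A)=\{0\}$. For $X$ in the left-hand side of (iv), I will decompose $X = \tilde{\phi}_n(X) + \tilde{\theta}_n(X) + \tilde{\rho}_n(X)$, show that the off-diagonal pieces $\tilde{\theta}_n(X)$ and $\tilde{\rho}_n(X)$ lie in the target $\Omega := \overline{\cl N\bar\otimes\cl U + (\cl N\cap\cl B)\bar\otimes\cl V}$ for every $n$, and show that the diagonal piece $\tilde{\phi}_n(X)$ has a weak* cluster point $S$ in $\Omega$; then $X - S$, being a weak* cluster point of a sequence contained in $\Omega$, also lies in $\Omega$, so $X = S + (X - S) \in \Omega$.

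Since all Schur idempotents commute (they correspond to multiplication by $\{0,1\}$-valued functions in the Schur multiplier algebra), we have $\phi(\ran\psi) = \ran\phi\cap\ran\psi$ for any two of them. For $\tilde{\rho}_n(X)$, the equality $\rho_n(\ran\psi_i\cap\cl B) = \{0\}$ (forced by $\cl B\subseteq\cl A$) eliminates the $\cl V$-term, and Lemmas~\ref{l_pre} and \ref{l_intem} give $\tilde{\rho}_n(X) \in (\cl N\cap\ran\rho_n)\bar\otimes\cl U\subseteq\Omega$. For $\tilde{\theta}_n(X)$, the inclusion $\cl Z_n\subseteq\cl A$ yields $\theta_n(\ran\psi_i\cap\cl B) = \ran(\psi_i\theta_n)\cap\cl B'$; applying the inductive hypothesis to the nested sequence $(\psi_i\theta_n)_i$ of Schur idempotents and to the width-$(m-1)$ bimodule $\cl B'$ places $\tilde{\theta}_n(X)$ inside $\overline{(\cl N\cap\cl Z_n)\bar\otimes\cl U + (\cl N\cap\cl Z_n\cap\cl B')\bar\otimes\cl V}$, which is contained in $\Omega$.

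The main obstacle is the diagonal piece $\tilde{\phi}_n(X)$, because $\cl M_n\not\subseteq\cl A$ for any finite $n$ (only the limit $\cl M:=\cap_n\cl M_n$ does). Relaxing $\cl B$ to $\cl B'$ in the $\cl V$-component and applying the inductive hypothesis to the nested sequence $(\psi_i\phi_n)_i$ produces $\tilde{\phi}_n(X) \in \overline{(\cl N\cap\cl M_n)\bar\otimes\cl U + (\cl N\cap\cl M_n\cap\cl B')\bar\otimes\cl V}$ for each $n$. Since $\tilde{\phi}_n$ is completely contractive, the bounded sequence $(\tilde{\phi}_n(X))_n$ admits a weak* cluster point $S$; using $\cl N\cap\cl M_n\subseteq\ran(\psi_n\phi_n)$, a second application of the inductive hypothesis --- this time to the nested sequence $(\psi_n\phi_n)_n$, whose intersection is $\cl N\cap\cl M$ --- places $S$ in $\overline{(\cl N\cap\cl M)\bar\otimes\cl U + (\cl N\cap\cl M\cap\cl B')\bar\otimes\cl V}$, which lies in $\Omega$ because $\cl M\subseteq\cl A$ forces $\cl M\cap\cl B'\subseteq\cl B$. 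The conclusion $X \in \Omega$ then follows as outlined; the reverse inclusion is immediate from $\cl N\subseteq\ran\psi_i$.
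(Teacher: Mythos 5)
Your overall strategy (induction on the width, the three-term decomposition $X=\tilde{\phi}_n(X)+\tilde{\theta}_n(X)+\tilde{\rho}_n(X)$ coming from a decomposition scheme for one nest algebra bimodule factor, and a weak* cluster point argument) matches the paper's, and your treatment of the $\tilde{\rho}_n(X)$ and $\tilde{\phi}_n(X)$ pieces is essentially sound. There is, however, a genuine gap in the $\tilde{\theta}_n(X)$ step: you invoke the inductive hypothesis (Proposition \ref{mazi} (iv) for width $m-1$) for the nested sequence $(\psi_i\theta_n)_i$, and these idempotents are \emph{not} contractive. In a decomposition scheme, $\theta_n$ is the strictly upper triangular truncation relative to an $(n+1)$-block partition; its norm grows with $n$ and, more to the point, $\ran\theta_n$, hence $\ran(\psi_i\theta_n)=\ran\psi_i\cap\cl Z_n$, is not a ternary masa-bimodule. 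Contractivity of the $\psi_i$ is precisely what guarantees ternarity of their ranges, which is what Lemma \ref{l_intem} and Theorem \ref{th_trohed} require; so the statement you are applying has simply not been established for the sequence $(\psi_i\theta_n)_i$. (A milder instance of the same slip occurs in the $\tilde{\rho}_n(X)$ step, where Lemma \ref{l_intem} is quoted for the non-ternary spaces $\ran\rho_n\cap\ran\psi_i$; there it is harmless, since the weaker containment $\cap_i\left((\ran\rho_n\cap\ran\psi_i)\bar\otimes\cl U\right)\subseteq\cap_i(\ran\psi_i\bar\otimes\cl U)=\cl N\bar\otimes\cl U\subseteq\Omega$ already suffices.)

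The gap is repairable in two ways. Either split $\theta_n=\sum_{p=1}^n\theta_{n,p}$ into the contractive diagonal summands supplied by the decomposition scheme (as the paper does in Lemmas \ref{triakyklos} and \ref{alphakyklos}) and apply the inductive hypothesis to each contractive nested sequence $(\psi_i\theta_{n,p})_i$, which gives $\tilde{\theta}_{n,p}(X)\in\overline{(\cl N\cap\ran\theta_{n,p})\bar\otimes\cl U+(\cl N\cap\ran\theta_{n,p}\cap\cl B')\bar\otimes\cl V}\subseteq\Omega$ because $\ran\theta_{n,p}\subseteq\cl A$, and then sum over $p$. Or follow the paper's route: first apply the inductive hypothesis with the \emph{original} contractive sequence $(\psi_i)_i$ and the wider bimodules $\cl B'$ and $\cl A$ separately, placing $X$ in $\overline{\cl N\bar\otimes\cl U+(\cl N\cap\cl B')\bar\otimes\cl V}$ and in $\overline{\cl N\bar\otimes\cl U+(\cl N\cap\cl A)\bar\otimes\cl V}$, and only afterwards apply $\tilde{\theta}_n$ (and $\tilde{\rho}_n$, $\tilde{\phi}_n$) to these collapsed memberships via Lemma \ref{l_pre}, which requires no contractivity at all.
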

\begin{proof}
Let 
$$\Omega = \overline{\cl N  \bar \otimes \cl U+(\cl N \cap \cl B)\bar \otimes \cl V}.$$ 
We use induction on the width $n$ of $\cl B.$ 
If $n=1$, the conclusion follows from Lemma \ref{alphakyklos}.
Suppose that the statement holds for masa-bimodules of width at most $n-1$ and let 
$\cl B=\cap _{l=1}^n\cl A_l$, where $\cl A_l$ is a nest algebra bimodule, $l = 1,\dots,n$. 
Fix
$$X\in  \cap _i \overline{ \ran \psi _i\bar \otimes\cl U + (\ran \psi _i\cap \cl B)\bar \otimes \cl V }.$$
By the inductive assumption, $X$ belongs to both
$$\overline{\cl N \bar \otimes \cl U
+ (\cl N \cap (\cap _{l=1}^{n-1}\cl A_l))
\bar \otimes \cl V}$$
and 
$$ \overline{\cl N \bar \otimes \cl U+(\cl N \cap \cl A_n)\bar \otimes \cl V}.$$
Let $(\phi_k,\theta_k,\cl M_k,\cl Z_k)_{k\in \bb{N}}$ be a decomposition scheme for $\cl A_n$.
For a fixed $k$, we have that $ \tilde{\phi}_k^\bot (X) $ belongs to the intersection of the spaces
$$\overline{\cl N \bar \otimes \cl U+(\cl N \cap (\cap _{l=1}^{n-1}\cl A_l))\bar \otimes \cl V} \ \ 
\mbox{ and } \ \ 
\overline{\cl N \bar \otimes \cl U+(\cl N \cap \cl Z_k) \bar \otimes \cl V}.$$
By Lemma \ref{l_pre},
$$\tilde{\theta}_k^\bot (\tilde{\phi}_k^\bot (X))\in \cl N \bar \otimes \cl U\subseteq \Omega$$ and
$$\tilde{\theta}_k( \tilde{\phi}_k^\bot (X) )\in \overline{ \cl N  \bar \otimes \cl U
+ (\cl N \cap (\cap _{i=1}^{n-1}\cl A_l)\cap \cl Z_k)\bar \otimes \cl U}
\subseteq \Omega, $$
since $\cl Z_k\subseteq \cl A_n$.
Thus, 
$$\tilde{\phi}_k^\bot (X) = 
\tilde{\theta}_k(\tilde{\phi}_k^\bot (X)) + \tilde{\theta}_k^{\perp}(\tilde{\phi}_k^\bot (X))
 \in \Omega .$$
Let $S$ be a weak* cluster point of $(\tilde{\phi}_k(X))_{k\in \bb{N}}$.
Then 
$$S\in \cap _{k}\overline{((\ran \psi _k\cap \ran \phi _k ) \bar \otimes \cl U+(\ran \psi _k\cap \ran \phi _k \cap (\cap _{l=1}^{n-1}\cl A_l))
\bar \otimes
\cl V )}.$$ By the inductive assumption, the latter space coincides with
$$\overline{(\cl N \cap \cl R) \bar \otimes \cl U
+ (\cl N \cap \cl R) \cap (\cap _{l=1}^{n-1}\cl A_l))\bar \otimes\cl V )},$$ 
which is a subset of $\Omega$ since $\cl R  \subseteq \cl A_n$. 
\end{proof}

\begin{lemma}\label{4kyklos}
Proposition \ref{mazi} (v) holds if $r = 1$.
\end{lemma}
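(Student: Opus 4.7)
The plan is to prove the non-trivial inclusion $\cap_i\overline{\ran\psi_i\bar\otimes\cl U+\cl B_1\bar\otimes\cl U_1}\subseteq\Omega$, where I set $\Omega=\overline{\cl N\bar\otimes\cl U+\cl B_1\bar\otimes\cl U_1}$ and $\cl N=\cap_i\ran\psi_i$, by decomposing an arbitrary $X$ in the left hand side as $X=\tilde{\psi}_i(X)+\tilde{\psi}_i^\perp(X)$, handling the two parts separately, and then passing to a weak* cluster point in $i$. This mirrors the cluster-point strategy discussed in this section.

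For the tail $\tilde{\psi}_i^\perp(X)$ I would apply Lemma~\ref{l_pre}(i) using the facts that $\psi_i^\perp(\ran\psi_i)=\{0\}$ and that $\psi_i^\perp$ leaves $\cl B_1$ invariant (the latter because $\cl B_1$, being a masa-bimodule, is invariant under every Schur map, and $\psi_i^\perp=\id-\psi_i$ is a difference of two such maps). This yields $\tilde{\psi}_i^\perp(X)\in\overline{\psi_i^\perp(\cl B_1)\bar\otimes\cl U_1}\subseteq\cl B_1\bar\otimes\cl U_1\subseteq\Omega$ for every $i$.

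For the piece $\tilde{\psi}_i(X)$, I would first record the elementary identity $\psi_i(\cl B_1)=\ran\psi_i\cap\cl B_1$ (every $B\in\ran\psi_i\cap\cl B_1$ satisfies $\psi_i(B)=B$ by idempotence), and then Lemma~\ref{l_pre}(i) gives
$$\tilde{\psi}_i(X)\in\overline{\ran\psi_i\bar\otimes\cl U+(\ran\psi_i\cap\cl B_1)\bar\otimes\cl U_1}.$$
Since $(\ran\psi_i)_i$ is nested, these right hand sides are decreasing in $i$, so any weak* cluster point $S$ of the bounded sequence $(\tilde{\psi}_i(X))_i$ (which exists by Banach--Alaoglu, as each $\tilde{\psi}_i$ is contractive) lies in $\cap_i\overline{\ran\psi_i\bar\otimes\cl U+(\ran\psi_i\cap\cl B_1)\bar\otimes\cl U_1}$. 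I would then invoke Lemma~\ref{vitakyklos} (which is Proposition~\ref{mazi}(iv) in the case $r=0$) to identify this intersection with $\overline{\cl N\bar\otimes\cl U+(\cl N\cap\cl B_1)\bar\otimes\cl U_1}\subseteq\Omega$, so that $S\in\Omega$. Choosing a subnet realising $S$, the complementary operators $\tilde{\psi}_{i_\alpha}^\perp(X)$ converge weak* to $X-S$; each of them sits in the weak*-closed set $\cl B_1\bar\otimes\cl U_1$, so $X-S\in\cl B_1\bar\otimes\cl U_1\subseteq\Omega$ as well, and hence $X=S+(X-S)\in\Omega$.

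The main obstacle is identifying the cluster point $S$ as an element of $\Omega$: a priori $S$ lies only in the rather implicit intersection $\cap_i\overline{\ran\psi_i\bar\otimes\cl U+(\ran\psi_i\cap\cl B_1)\bar\otimes\cl U_1}$, and collapsing this intersection into a single weak*-closed sum over $\cl N$ and $\cl N\cap\cl B_1$ is precisely what the already established statement (iv) supplies. Once Lemma~\ref{vitakyklos} is invoked, the rest is routine bookkeeping with the decomposition $X=\tilde{\psi}_i(X)+\tilde{\psi}_i^\perp(X)$, the invariance of masa-bimodules under Schur maps, and weak* compactness.
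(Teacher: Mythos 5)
Your proof is correct, but it takes a genuinely more direct route than the paper's. The paper proves this lemma by induction on the width $n$ of $\cl B=\cl B_1$: the base case is Lemma \ref{triakyklos}, and the induction step introduces a decomposition scheme $(\phi_k,\theta_k,\cl M_k,\cl Z_k)$ for the outermost nest algebra bimodule $\cl A_n$, splits $X$ into $\tilde{\phi}_k^{\perp}(X)$ (handled by the inductive hypothesis together with a further splitting by $\tilde{\theta}_k$) and a weak* cluster point $S$ of $(\tilde{\phi}_k(X))_k$, and then decomposes $S=S'+S''$ by compressing with $\tilde{\psi}_k$, invoking Lemma \ref{vitakyklos} only for the doubly compressed piece $S'$. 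You instead compress directly by the given idempotents $\tilde{\psi}_i$: the complementary piece $\tilde{\psi}_i^{\perp}(X)$ lands in $\cl B_1\bar\otimes\cl U_1$ because $\psi_i^{\perp}$ annihilates $\ran\psi_i$ and leaves the masa-bimodule $\cl B_1$ invariant, while the identity $\psi_i(\cl B_1)=\ran\psi_i\cap\cl B_1$ places every cluster point of $(\tilde{\psi}_i(X))_i$ inside $\cap_i\overline{\ran\psi_i\bar\otimes\cl U+(\ran\psi_i\cap\cl B_1)\bar\otimes\cl U_1}$, which a single application of Lemma \ref{vitakyklos} collapses into $\Omega$. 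All the ingredients you use (Lemma \ref{l_pre}, Lemma \ref{vitakyklos}, the invariance of weak* closed masa-bimodules under Schur maps) are available at this point of the paper, and there is no circularity since Lemma \ref{vitakyklos} does not depend on the present lemma. In effect you delegate the entire width induction to Lemma \ref{vitakyklos}, where it has already been carried out, which yields a shorter argument; the paper's version has the virtue of following the same structural template as the neighbouring lemmas (and as its inductive-step analogue, Lemma \ref{4kyklos2}), but for this case your shortcut is legitimate.
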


\begin{proof}
Let 
$\Omega = \overline{\cl N  \bar \otimes \cl U + \cl W}.$
We set $\cl B = \cl B_1$ and $\cl V = \cl U_1$, and use induction on the width $n$ of $\cl B$.
For $n=1$ the conclusion follows from Lemma \ref{triakyklos}. Suppose that 
the statement holds if the length of $\cl B$ does not exceed $n-1$ and assume that 
$\cl B=\cap _{l=1}^n\cl A_l$
where $\cl A_l$  is a nest algebra bimodule, $l = 1,\dots,n$. 
Let $(\phi_k,\theta_k,\cl M_k,\cl Z_k)_{k\in \bb{N}}$ be a decomposition scheme for $\cl A_n$.
Fix 
$$X\in \cap _i \overline{\ran \psi _i\bar \otimes \cl U+\cl B\bar \otimes \cl V}.$$
By the inductive assumption, $X$ belongs to the intersection of the spaces
 $$\overline{\cl N \bar \otimes \cl U+( \cap _{l=1}^{n-1}\cl A_l )\bar \otimes \cl V}
\ \mbox{ and } \ \overline{\cl N \bar \otimes \cl U + \cl A_n\bar \otimes \cl V}.$$
By Lemma \ref{l_pre}, for a fixed $k$, 
$\tilde{\phi}_k^\bot (X) \in \overline{\cl N \bar \otimes \cl U+\cl Z_k\bar \otimes \cl V}$.
Therefore,
$\tilde{\theta}_k^\bot (\tilde{\phi}_k^\bot (X))\in \cl N \bar \otimes \cl U$,
and hence $\tilde{\theta}_k^\bot (\tilde{\phi}_k^\bot (X)) \in \Omega$.
On the other hand, 
$$\tilde{\theta}_k(\tilde{\phi}_k^\bot (X))\in 
\overline{ \cl N \bar \otimes \cl U +((\cap _{l=1}^{n-1}\cl A_l)\cap \cl Z_k)
\bar \otimes \cl V}\subseteq \Omega $$ 
since $\cl Z_k\subseteq \cl A_n$. 
Thus, $\tilde{\phi}_k^\bot (X) \in \Omega$, for all $k\in \bb{N}$. 
It hence suffices to prove that, 
if $S$ is the limit of a subsequence $(\tilde{\phi}_{k_l}(X))_{l\in \bb{N}}$, then $S\in \Omega.$
Let $S'$ be a weak* cluster point of the sequence 
$(\tilde{\psi}_{k_l}(\tilde{\phi}_{k_l}(X)))_{l\in \bb{N}}$; 
then $S'' = S - S'$ is a weak* cluster point of
$(\tilde{\psi}_{k_l}^{\perp}(\tilde{\phi}_{k_l}(X)))_{l\in \bb{N}}$.
By Lemma \ref{l_pre},
$$ \tilde{\psi}_k(\tilde{\phi}_k(X)) \in 
\overline{(\cl M_k\cap  \ran\psi_k) \bar \otimes \cl U 
+((\cap _{l=1}^{n-1}\cl A_l)\cap (\cl M _k\cap\ran\psi_k))\bar \otimes \cl V},$$
while
$$ \tilde{\psi}_k^{\perp}(\tilde{\phi}_k(X)) 
= \tilde{\phi}_k(\tilde{\psi}_k^{\perp}(X)) \in 
\cl B\bar\otimes\cl V\subseteq \Omega, \ \ k\in \bb{N}.$$

On the other hand, Lemma \ref{vitakyklos} implies that 
$$S'\in 
\overline{((\cap_k \cl M_k)\cap  \cl N) \bar \otimes \cl U 
+((\cap _{l=1}^{n-1}\cl A_l)\cap (\cap_k \cl M_k)\cap \cl N)\bar \otimes \cl V} \subseteq \Omega.$$
Thus, $S = S' + S''\in \Omega$.
\end{proof}

\begin{lemma}\label{lgama}
Proposition \ref{gama} holds if $r = 1$.
\end{lemma}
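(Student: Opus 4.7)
My plan is to prove Lemma \ref{lgama} by first establishing an \emph{auxiliary} case in which the nest algebra bimodule $\cl C$ is replaced by a ternary masa-bimodule $\cl E$, and then reducing the general statement to this case via a decomposition scheme for $\cl C$. Throughout, write $\cl W = \cl B_1 \bar\otimes \cl U_1$ and fix a nested sequence $(\rho_n)_{n\in\bb{N}}$ of contractive Schur idempotents with $\cl M = \cap_n \ran \rho_n$ (available since $\cl M$ is ternary). The key technical input is that Schur idempotents commute as elements of the Schur-multiplier algebra, so any product of contractive Schur idempotents is again a contractive Schur idempotent whose range is the intersection of the factors' ranges.

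For the auxiliary case, suppose $\cl E = \cap_l \ran \sigma_l$ is ternary with $(\sigma_l)$ nested contractive Schur idempotents, and let $T \in \overline{\cl M \bar\otimes \cl U + \cl W} \cap \overline{\cl E \bar\otimes \cl U + \cl W}$. Lemma \ref{4kyklos} places $T$ in both $\overline{\ran \rho_n \bar\otimes \cl U + \cl W}$ and $\overline{\ran \sigma_l \bar\otimes \cl U + \cl W}$ for all $n,l$. Applying Lemma \ref{l_o2} with $\phi = \rho_n$ and $\cl V = \ran \sigma_l$ (noting that $\cl W$ is $\tilde{\rho}_n$-invariant by Lemma \ref{l_pre}, while $\ran \sigma_l$ is a masa-bimodule, hence $\rho_n$-invariant) yields $T \in \overline{\ran(\rho_n \sigma_l) \bar\otimes \cl U + \cl W}$. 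Specializing to $l = n$ produces a nested sequence $(\rho_n \sigma_n)$ whose ranges intersect to $\cl M \cap \cl E$, and a second invocation of Lemma \ref{4kyklos} gives $T \in \overline{(\cl M \cap \cl E) \bar\otimes \cl U + \cl W}$.

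To reduce the general case, fix a decomposition scheme $(\alpha_k, \beta_k, \cl E_k, \cl F_k)_{k \in \bb{N}}$ for $\cl C$ and set $\gamma_k = \id - \alpha_k - \beta_k$; then $\alpha_k$ is a contractive Schur idempotent and $\cl E_k = \ran \alpha_k$ is ternary. For $T$ in the LHS, decompose $T = \tilde{\alpha}_k(T) + \tilde{\beta}_k(T) + \tilde{\gamma}_k(T)$. Using $\gamma_k(\cl C) = \{0\}$, $\beta_k(\cl M) = \cl M \cap \cl F_k \subseteq \cl M \cap \cl C$, and $\cl F_k \subseteq \cl C$, Lemma \ref{l_pre} immediately places $\tilde{\gamma}_k(T)$ and $\tilde{\beta}_k(T)$ in $\overline{(\cl M \cap \cl C) \bar\otimes \cl U + \cl W}$. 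The term $\tilde{\alpha}_k(T)$ lies in $\overline{\cl M \bar\otimes \cl U + \cl W} \cap \overline{\cl E_k \bar\otimes \cl U + \cl W}$ (since $\alpha_k(\cl C) \subseteq \cl E_k$), so the auxiliary case delivers $\tilde{\alpha}_k(T) \in \overline{(\cl M \cap \cl E_k) \bar\otimes \cl U + \cl W}$. Letting $S$ be a weak* cluster point of $(\tilde{\alpha}_k(T))_k$, the inclusion $\cl M \cap \cl E_n \subseteq \ran(\rho_n \alpha_n)$ together with Lemma \ref{4kyklos} applied to the nested sequence $(\rho_n \alpha_n)_n$, whose ranges intersect to $\cl M \cap (\cap_k \cl E_k) \subseteq \cl M \cap \cl C$, places $S$ in $\overline{(\cl M \cap \cl C) \bar\otimes \cl U + \cl W}$. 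Since $T - S$ is a cluster point of $(\tilde{\beta}_k(T) + \tilde{\gamma}_k(T))_k$, it too lies there, and the identity $T = S + (T - S)$ yields the desired conclusion; the reverse inclusion is trivial.

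The step I expect to be the main obstacle is keeping the diagonal idempotent constructions honest: the proof passes through nested sequences $(\rho_n \sigma_n)$ in the auxiliary case and $(\rho_n \alpha_n)$ in the reduction, and one must verify in each case that nesting and the correct range-intersection persist, invoking both the commutativity of Schur idempotents and the fact recalled before Proposition \ref{mazi} that ranges of contractive Schur idempotents are ternary masa-bimodules. A secondary delicacy is that although $\alpha_k$ is contractive, $\beta_k$ and $\gamma_k$ need not be, so I rely on Lemma \ref{l_pre}, which requires only weak* continuity and complete boundedness, rather than contractivity, to control the corresponding tensored idempotents.
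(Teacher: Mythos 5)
Your proof is correct and follows essentially the same route as the paper's: a decomposition scheme for $\cl C$, Lemmas \ref{l_pre}, \ref{l_o2} and \ref{4kyklos} to control the diagonal and off-diagonal parts, and a weak* cluster-point argument to reassemble $T$. The only differences are cosmetic --- you split the complement of the block-diagonal idempotent into upper- and lower-triangular pieces and package the diagonal step as an auxiliary ternary case built from the nested sequences $(\rho_n\sigma_n)$, whereas the paper treats $\tilde{\psi}_k^{\perp}(X)$ in one go via Lemma \ref{l_o2} and intersects $\cl M = \cap_k \cl M_k$ directly --- but the underlying mechanism is identical.
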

\begin{proof} 
Set $\cl B = \cl B_1$ and $\cl V = \cl U_1$,
let 
$\Omega = \overline{(\cl M\cap \cl C)\bar \otimes \cl U+\cl B\bar \otimes \cl V}$
and fix 
$$X\in  \overline{\cl M\bar \otimes \cl U+\cl B\bar \otimes \cl V} \cap \overline{ \cl C\bar \otimes
\cl U+\cl B\bar \otimes \cl V }.$$
Let $(\psi_k,\theta_k,\ran\psi_k,\cl Z_k)_{k\in \bb{N}}$ be a decomposition scheme for $\cl C$.
We have that 
$$\tilde{\psi}_k^\bot (X)\in \overline{\cl M\bar\otimes \cl U+\cl B \bar\otimes \cl V}
\cap \overline{\cl Z_k \bar\otimes \cl U+\cl B \bar\otimes \cl V}.$$
By Lemma \ref{l_o2}, 
$$ \tilde{\psi}_k^\bot (X)
\in \overline{(\cl M \cap \cl Z_k)\bar \otimes \cl U+\cl B\bar \otimes \cl V}\subseteq
\overline{( \cl M\cap \cl C)\bar \otimes \cl U+\cl B\bar \otimes \cl V }.$$
On the other hand, by Lemmas \ref{l_pre} and \ref{l_o2}, 
$$\tilde{\psi}_k(X)\in 
\overline{(\cl M_k\cap \ran \psi _k)\bar \otimes \cl U+\cl B\bar \otimes \cl V}$$ 
for all $k\in \bb{N}$.
Lemma \ref{4kyklos}
shows that, if 
$S$ is a weak* cluster point of the sequence $(\tilde{\psi}_k(X))_{k\in \bb{N}}$,
then 
$$S \in 
\overline{(\cap _{k} (\cl M_k\cap \ran \psi_k)) \bar \otimes \cl U+\cl B\bar \otimes \cl V}.$$
Since $\cap _{k} (\cl M_k\cap \ran \psi_k) \subseteq \cl M \cap \cl C$, we conclude that
$S \in \Omega$. The proof is complete.
\end{proof}

\begin{lemma}\label{ldelta}
Proposition \ref{delta} holds if $r = 1$.
\end{lemma}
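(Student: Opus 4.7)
The plan is to proceed by induction on the width $n$ of the masa-bimodule $\cl C$. The base case $n=1$ is exactly Lemma \ref{lgama}, since then $\cl C$ is a nest algebra bimodule. For the inductive step, write $\cl C = \cl C' \cap \cl A_n$, with $\cl C' = \cap_{l=1}^{n-1}\cl A_l$ of width at most $n-1$ and $\cl A_n$ a nest algebra bimodule. Set $\cl B = \cl B_1$, $\cl V = \cl U_1$, so that $\cl W = \cl B\bar\otimes\cl V$, and denote $\Omega = \overline{(\cl M\cap\cl C)\bar\otimes\cl U + \cl B\bar\otimes\cl V}$. Fix $X$ in the left-hand intersection; since $\cl C \subseteq \cl C'$, the inductive hypothesis applied to $\cl M$ and $\cl C'$ yields $X \in \overline{(\cl M\cap \cl C')\bar\otimes\cl U + \cl B\bar\otimes\cl V}$.

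Introduce a decomposition scheme $(\phi_k, \psi_k, \cl M_k, \cl W_k)_{k\in \bb{N}}$ for $\cl A_n$ and write $\id = \tilde\phi_k + \tilde\psi_k + \tilde\theta_k$, where $\theta_k = \id - \phi_k - \psi_k$. The strict-lower piece $\tilde\theta_k(X)$ lies in $(\cl B\cap \ran\theta_k)\bar\otimes\cl V \subseteq \Omega$, since $\theta_k(\cl A_n) = 0$ and $X \in \overline{\cl A_n\bar\otimes\cl U + \cl B\bar\otimes\cl V}$. For the upper piece $\tilde\psi_k(X)$: the inclusion $\psi_k(\cl A_n)\subseteq \cl W_k$ combined with the invariance of $\cl M$ under $\psi_k$ and Lemma \ref{l_o2} gives $\tilde\psi_k(X) \in \overline{(\cl M\cap \cl W_k)\bar\otimes\cl U + \cl B\bar\otimes\cl V}$; since $\cl M\cap \cl W_k$ is ternary, applying the inductive hypothesis to $\cl M\cap \cl W_k$ and $\cl C'$ places $\tilde\psi_k(X)$ in $\overline{(\cl M\cap \cl W_k\cap \cl C')\bar\otimes\cl U + \cl B\bar\otimes\cl V}$, and $\cl W_k \subseteq \cl A_n$ forces $\cl M\cap \cl W_k\cap \cl C' \subseteq \cl M\cap \cl C$, so $\tilde\psi_k(X) \in \Omega$.

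The subtle piece is the diagonal $\tilde\phi_k(X)$, which by Lemma \ref{l_pre} lies in $\overline{(\cl M\cap \cl C'\cap \cl M_k)\bar\otimes\cl U + (\cl B\cap \cl M_k)\bar\otimes\cl V}$; this cannot be placed directly in $\Omega$ because $\cl M_k \not\subseteq \cl A_n$ in general, and only the limit $\cl M_\infty := \cap_k \cl M_k$ satisfies $\cl M_\infty \subseteq \cl A_n$. I would therefore pass to a weak* cluster point $S$ of $(\tilde\phi_k(X))_{k\in \bb{N}}$. By Lemma \ref{l_intem}, $S \in \cl M_\infty\bar\otimes\cl B(K_1,K_2)$, so $\tilde\phi_p(S) = S$ for every $p$. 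Applying $\tilde\phi_p$ to the memberships $S \in \overline{\cl M\bar\otimes\cl U + \cl B\bar\otimes\cl V}$ and $S \in \overline{(\cl M\cap \cl C')\bar\otimes\cl U + \cl B\bar\otimes\cl V}$ (both inherited by $S$ as a weak* cluster point) places $S$ in $\overline{(\cl M\cap \cl C'\cap \cl M_p)\bar\otimes\cl U + (\cl B\cap \cl M_p)\bar\otimes\cl V}$ for each $p$; a two-sided masked variant of Proposition \ref{mazi}(iv) then collapses the intersection over $p$ to $\overline{(\cl M\cap \cl C'\cap \cl M_\infty)\bar\otimes\cl U + (\cl B\cap \cl M_\infty)\bar\otimes\cl V}\subseteq \Omega$, using $\cl M_\infty \subseteq \cl A_n$. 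Passing to the same subsequence in the decomposition $X = \tilde\phi_{k_j}(X) + \tilde\psi_{k_j}(X) + \tilde\theta_{k_j}(X)$ yields $X - S \in \Omega$ by weak* closedness, whence $X \in \Omega$.

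The main obstacle is the cluster-point collapse: one needs a two-sided masked variant of Proposition \ref{mazi}(iv), namely $\cap_p \overline{(\cl G\cap \ran\psi_p)\bar\otimes\cl U + (\cl B\cap \ran\psi_p)\bar\otimes\cl V} = \overline{(\cl G\cap \cap_p \ran\psi_p)\bar\otimes\cl U + (\cl B\cap \cap_p \ran\psi_p)\bar\otimes\cl V}$ for masa-bimodules $\cl G,\cl B$ of finite width. This is not among the already-proved lemmas, but should follow by combining Lemma \ref{4kyklos} with a further induction on the widths of $\cl G$ and $\cl B$, or by iterating decomposition schemes for the remaining $\cl A_l$'s.
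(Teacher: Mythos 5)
Your overall strategy (induction on the width of $\cl C$, decomposition scheme for $\cl A_n$, splitting $X$ into diagonal, upper and lower pieces, cluster point for the diagonal) matches the paper's, but the treatment of the diagonal cluster point has a genuine gap. You place $S=\tilde\phi_p(S)$ in $\overline{(\cl M\cap\cl C'\cap\cl M_p)\bar\otimes\cl U+(\cl B\cap\cl M_p)\bar\otimes\cl V}$ for every $p$ and then invoke a ``two-sided masked variant of Proposition \ref{mazi}(iv)'' to collapse the intersection over $p$. You correctly flag that this is not among the proved lemmas, but the problem is worse than a routine omission: at the point in the paper's global induction where Lemma \ref{ldelta} sits, even the one-sided statement of Proposition \ref{mazi}(iv) for $r=1$ is not yet available (only the $r=0$ versions of (i)--(iv) and the $r=1$ version of (v) are), and your two-sided version is strictly stronger still; proving it would require its own nested induction comparable to the lemmas already in the paper, so ``should follow by combining\dots'' does not close the argument. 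The paper sidesteps this entirely by keeping the two memberships of $S$ \emph{separate}: from $\tilde\psi_k(X)\in\overline{(\cl M_k\cap\ran\psi_k)\bar\otimes\cl U+\cl B\bar\otimes\cl V}$ (mask on the first term only, and $\cl M_k\cap\ran\psi_k$ is the range of a contractive Schur idempotent) it applies Lemma \ref{4kyklos} to get $S\in\overline{(\cl M\cap\cl N)\bar\otimes\cl U+\cl B\bar\otimes\cl V}$, notes separately that $S\in\overline{(\cap_{l=1}^{n-1}\cl C_l)\bar\otimes\cl U+\cl B\bar\otimes\cl V}$, and then combines the two via the width-induction hypothesis of Proposition \ref{delta} itself, applied to the \emph{ternary} bimodule $\cl M\cap\cl N$ and the width-$(n-1)$ bimodule $\cap_{l=1}^{n-1}\cl C_l$. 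That inductive hypothesis is exactly the tool that replaces your missing collapse lemma; without noticing this, your proof does not go through as written.

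A secondary, repairable error: you assert that $\cl M\cap\cl W_k$ is ternary, where $\cl W_k=\ran\psi_k$ is the range of the strictly-upper-triangular idempotent. This is false in general: $\ran\psi_k=\sum_{i<j}Q_i\cl B(H_1,H_2)P_j$ is not closed under $TS^*R$ (a product of blocks above the diagonal can land below it), so neither it nor its intersection with $\cl M$ need be a TRO; only the individual strips $\psi_{k,p}$ have ternary ranges. Fortunately you do not need ternarity there at all: since $\tilde\psi_k(X)$ lies in both $\overline{(\cl M\cap\cl C')\bar\otimes\cl U+\cl B\bar\otimes\cl V}$ and $\overline{\ran\psi_k\bar\otimes\cl U+\cl B\bar\otimes\cl V}$, Lemma \ref{l_o2} (which requires only an idempotent, no ternary hypothesis) already yields $\tilde\psi_k(X)\in\overline{(\ran\psi_k\cap\cl M\cap\cl C')\bar\otimes\cl U+\cl B\bar\otimes\cl V}\subseteq\Omega$, exactly as the paper does for $\tilde\psi_k^{\perp}(X)$ in its notation.
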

\begin{proof}
Write $\cl B = \cl B_1$ and $\cl V = \cl U_1$. 
We use induction on the width $n$ of $\cl C.$ If $n=1$, the statement reduces to 
Lemma \ref{lgama}.
Suppose that the statement holds for all masa-bimodules $\cl C$ of width at most $n-1$
and let
$\cl C=\cap _{l=1}^n\cl C_l,$ where $\cl C_l$ is a nest algebra bimodule, $l = 1,\dots,n$. 
Fix
$$X\in \overline{\cl M\bar \otimes \cl U+\cl B\bar \otimes \cl V} \cap \overline{ \cl C\bar \otimes \cl U+\cl B\bar \otimes \cl V}.$$
Let $(\psi_k,\theta_k,\ran\psi_k,\cl Z_k)_{k\in \bb{N}}$ be a decomposition scheme for $\cl C_n$ 
and recall that $\cl N = \cap_{k}\ran\psi_k$. 
There exists a descending sequence $(\cl M_k)_{k\in \bb{N}}$ of ranges of contractive Schur idempotents 
such that $\cl M=\cap_{k\in \bb{N}} \cl M_k.$
Observe that, by the inductive assumption, 
\begin{eqnarray*}
\tilde{\psi}_k^\bot (X) & \in & \overline{\cl M\bar \otimes \cl U+\cl B\bar \otimes \cl V} \cap \overline{(\cap _{l=1}^{n-1} \cl C_l)\bar \otimes \cl U+\cl B\bar \otimes \cl V } 
 \cap \overline{\cl Z_k\bar \otimes \cl U+\cl B \bar \otimes \cl V}\\
& = &
\overline{(\cl M\cap (\cap _{l=1}^{n-1} \cl C_l)\bar \otimes \cl U+\cl B\bar \otimes \cl V } \cap
\overline{(\cl Z_k\bar \otimes \cl U+\cl B \bar \otimes \cl V)}.
\end{eqnarray*}
By Lemma \ref{l_o2}, 
$$\tilde{\psi}_k^\bot (X) \in \overline{ (  \cl M\cap \cl C)\bar \otimes \cl U+\cl B\bar \otimes \cl V }. $$

Observe that
$$\tilde{\psi}_k(X)\in \overline{\cl M_k\bar \otimes \cl U+\cl B\bar \otimes \cl V} \cap \overline{
(\cap _{l=1}^{n-1} \cl C_l)\bar \otimes \cl U+\cl B\bar \otimes \cl V } 
 \cap \overline{\ran \psi _k\bar \otimes \cl U+\cl B \bar \otimes \cl V}$$
and so, by Lemma \ref{l_o2}, 
$$\tilde{\psi}_k(X) \in 
\overline{(\cl M_k\cap \ran \psi _k)\bar \otimes \cl U+\cl B\bar \otimes \cl V} \cap \overline{
(\cap _l^{n-1} \cl C_l)\bar \otimes \cl U+\cl B\bar \otimes \cl V }, \ \ k\in \bb{N}.$$
By Lemma \ref{4kyklos}, if $S$ is a weak* cluster point of $(\tilde{\psi}_k(X))_{k\in\bb{N}}$, 
then
$$S\in \overline{(\cl M\cap \cl N)\bar \otimes \cl U+\cl B\bar \otimes \cl V }\cap \overline{
(\cap _{l=1}^{n-1} \cl C_l)\bar \otimes \cl U+\cl B\bar \otimes \cl V }.  $$
By the inductive assumption,
$$S\in \overline{(\cl M\cap \cl N \cap (\cap _{l=1}^{n-1} \cl C_l))\bar \otimes \cl U +
\cl B\bar \otimes \cl V } \subseteq \overline{(\cl M\cap \cl C)\bar \otimes \cl U + \cl B\bar \otimes \cl V},$$
since $\cl N\subseteq \cl C_n$. 
The proof is complete.
\end{proof}

\begin{lemma}\label{lepsilon}
Theorem \ref{epsilon} holds if $r = 1$.
\end{lemma}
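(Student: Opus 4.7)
The plan is to prove Lemma \ref{lepsilon} by a double induction: an outer induction on $m$ that reduces everything to the case $m=2$, followed by an inner induction on the width of $\cl C_1$ within that case. The outer base case $m=1$ is trivial. For $m \geq 2$, applying the outer inductive hypothesis to $\cl C_1,\dots,\cl C_{m-1}$ yields
$$\cap_{j=1}^{m-1} \overline{\cl C_j \bar\otimes \cl U + \cl W} = \overline{\cl C' \bar\otimes \cl U + \cl W},$$
where $\cl C' := \cap_{j=1}^{m-1}\cl C_j$ is again a masa-bimodule of finite width (it is the intersection of at most $\sum_j \mathrm{width}(\cl C_j)$ nest algebra bimodules). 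Hence it suffices to prove
$$\overline{\cl C_1 \bar\otimes \cl U + \cl W} \cap \overline{\cl C_2 \bar\otimes \cl U + \cl W} = \overline{(\cl C_1 \cap \cl C_2)\bar\otimes \cl U + \cl W}$$
for two masa-bimodules $\cl C_1, \cl C_2$ of finite width.

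For this $m=2$ statement I would induct on the width $n$ of $\cl C_1$. The base case $n=1$ is the crux: then $\cl C_1$ is a nest algebra bimodule, so take a decomposition scheme $(\phi_k, \psi_k, \cl M_k, \cl W_k)_{k\in \bb{N}}$ for $\cl C_1$ and set $\theta_k = \id - \phi_k - \psi_k$. For $X$ in the left-hand side, decompose $X = \tilde\phi_k(X) + \tilde\psi_k(X) + \tilde\theta_k(X)$ and treat the three pieces separately. Since $\cl C_1 \subseteq \cl W_k + \cl M_k = \ran(\phi_k + \psi_k)$, the map $\theta_k$ annihilates $\cl C_1$, and Schur-invariance of $\cl B$ forces $\tilde\theta_k(X) \in \cl W$. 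Likewise $\psi_k(\cl C_1) = \cl W_k \subseteq \cl C_1$ and $\psi_k(\cl C_2) \subseteq \cl W_k \cap \cl C_2$, whence
$$\tilde\psi_k(X) \in \overline{(\cl W_k \cap \cl C_2)\bar\otimes \cl U + \cl W} \subseteq \overline{(\cl C_1 \cap \cl C_2)\bar\otimes \cl U + \cl W}.$$
The decisive term is $\tilde\phi_k(X) \in \overline{\cl M_k \bar\otimes \cl U + \cl W} \cap \overline{\cl C_2 \bar\otimes \cl U + \cl W}$; since $\cl M_k$ is \emph{ternary}, Lemma \ref{ldelta} applies and gives $\tilde\phi_k(X) \in \overline{(\cl M_k \cap \cl C_2)\bar\otimes \cl U + \cl W}$. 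Any weak* cluster point $S$ of $(\tilde\phi_k(X))_k$ then lies in $\bigl(\cap_k \overline{\cl M_k \bar\otimes \cl U + \cl W}\bigr) \cap \overline{\cl C_2 \bar\otimes \cl U + \cl W}$, which by Lemma \ref{4kyklos} (applied to the nested contractive Schur idempotents $\phi_k$) followed by a second application of Lemma \ref{ldelta} to the ternary bimodule $\cap_k \cl M_k \subseteq \cl C_1$ is contained in $\overline{(\cl C_1 \cap \cl C_2)\bar\otimes \cl U + \cl W}$. Since $X-S$ is a cluster point of $(\tilde\psi_k(X)+\tilde\theta_k(X))_k$ and each term of that sequence already lies in this target, we conclude $X = S + (X-S) \in \overline{(\cl C_1 \cap \cl C_2)\bar\otimes \cl U + \cl W}$, completing the base case.

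For the inner inductive step $n \geq 2$, write $\cl C_1 = \cap_{l=1}^n \cl A_l$ with $\cl A_l$ nest algebra bimodules, and run the analogous argument using a decomposition scheme for $\cl A_n$, invoking the inner inductive hypothesis on $\cap_{l=1}^{n-1}\cl A_l$ (of width at most $n-1$) and $\cl C_2$ at the steps where the base case used Lemma \ref{ldelta}, very much in the spirit of the proof of Lemma \ref{ldelta} itself. The main obstacle is precisely the base case $n=1$: neither $\cl C_1$ nor $\cl C_2$ is intrinsically ternary, so Lemma \ref{ldelta} cannot be invoked on them directly; the decomposition scheme for $\cl C_1$ is what introduces the ternary masa-bimodules $\cl M_k$ and their intersection $\cap_k \cl M_k$, providing the bridge back to Lemma \ref{ldelta}. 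The other delicate point is the careful bookkeeping of weak* cluster points across the three pieces $\tilde\phi_k(X), \tilde\psi_k(X), \tilde\theta_k(X)$, exploiting that $\tilde\psi_k(X) + \tilde\theta_k(X)$ already lies in the target space for every $k$, so only the $\tilde\phi_k(X)$-sequence needs to be handled in the limit.
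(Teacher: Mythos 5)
Your proof is correct and follows essentially the same route as the paper's: both reduce the problem to intersecting a nest-algebra-bimodule term against a finite-width term and then run the same decomposition-scheme/weak* cluster-point argument through Lemmas \ref{4kyklos} and \ref{ldelta}. The only differences are organizational — the paper replaces your inner induction on the width of $\cl C_1$ by the upfront observation that one may assume every $\cl C_j$ is a nest algebra bimodule (refining each finite-width $\cl C_j$ into its constituent nest algebra bimodules only enlarges the left-hand intersection), and it handles $\tilde{\phi}_k^{\perp}(X)$ in one stroke via Lemma \ref{l_o2} where you split it into the $\tilde{\psi}_k$ and $\tilde{\theta}_k$ pieces.
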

\begin{proof}
Since each masa-bimodule of finite width is the finite intersection of 
nest algebra masa-bimodules, we may assume, without loss of generality, that 
$\cl C_j$ is a nest algebra bimodule, $j = 1,\dots,m$.
Set $\cl B = \cl B_1$, $\cl V = \cl U_1$ and $\cl C = \cap_{j=1}^m\cl C_j$. 
We use induction on $m.$ 
For $m = 1$, the statement is trivial; suppose it holds if the 
number of given bimodules is at most $m-1$
and fix 
$X\in \cap _{j=1}^m\overline{\cl C_j\bar \otimes \cl U+\cl B\bar \otimes \cl V}.$
Let $(\phi_k,\theta_k,\cl M_k,\cl Z_k)_{k\in \bb{N}}$ be a decomposition scheme for $\cl C_m$
and set $\cl M = \cap_{k\in \bb{N}}\cl M_k$. 
Since $X\in \overline{\cl C_m \bar\otimes \cl U  + \cl W}$,
Lemma \ref{l_pre}
implies that 
$$\tilde{\phi}_k^{\bot}(X) \in \overline{\cl Z_k\bar\otimes \cl U + \cl W}.$$
By the inductive assumption and the invariance of 
$\cap_{j=1}^{m-1}\cl C_j$ under $\phi_k$, we have that
$$\tilde{\phi}_k^{\bot}(X) \in \overline{(\cap_{j=1}^{m-1} \cl C_j) \bar\otimes \cl U + \cl W}.$$
Hence, by Lemma \ref{l_o2}, 
$$\tilde{\phi}_k^{\bot}(X) \in
\overline{((\cap _{i=1}^{m-1}\cl C_i)\cap \cl Z_k)\bar \otimes \cl U+\cl B\bar \otimes \cl V}\subseteq
\overline{\cl C\bar \otimes \cl U+\cl B\bar \otimes \cl V}, \ \ \ k\in \bb{N}.$$

On the other hand,
$$ \tilde{\phi}_k(X) \in \overline{\cl M _k \bar \otimes \cl U+\cl B\bar \otimes \cl V}\cap
\overline{(\cap _{j=1}^{m-1}\cl C_j)\bar \otimes \cl U+\cl B\bar \otimes \cl V},  \ \ \ k\in \bb{N}.$$
If $S$ is a weak* cluster point of the sequence $(\tilde{\phi} _k(X))_{k\in \bb{N}}$, 
by Lemma \ref{4kyklos} we have 
 $$S\in \overline{\cl M \bar \otimes \cl U + \cl B\bar \otimes \cl V}\cap
\overline{(\cap _{j=1}^{m-1}\cl C_j)\bar \otimes \cl U + \cl B\bar \otimes \cl V}.$$
By Lemma \ref{ldelta}, 
$$S\in \overline{(\cl M \cap (\cap _{i=1}^{m-1}\cl C_i))\bar \otimes \cl U + \cl B\bar \otimes \cl V}\subseteq
\overline{\cl C\bar \otimes \cl U + \cl B\bar \otimes \cl V}.$$
\end{proof}

We next establish the induction step for the 
proofs of Propositions \ref{mazi} -- \ref{delta} and Theorem \ref{epsilon};
this is done in Lemmas \ref{diokyklos2} -- \ref{lepsilon2} below.
To this end,
we assume that the statements in Proposition \ref{mazi} (i)--(iv) hold if
the space $\cl W$ has $r-1$ summands,
while Proposition \ref{mazi} (v), Proposition \ref{gama}, Proposition \ref{delta} and
Theorem \ref{epsilon} hold if $\cl W$ has $r$ summands.

\begin{lemma}\label{diokyklos2}
Proposition \ref{mazi} (i) holds if the space $\cl W$ has $r$ terms.
\end{lemma}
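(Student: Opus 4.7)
The plan is to mirror the proof of the base case (Lemma \ref{diokyklos}), threading the sum $\cl W$ through the argument. Set $\cl N = \cap_{i}\ran\psi_i$, $\cl R = \cap_{k}\ran\phi_k$, and let $\Omega$ denote the right-hand side. The inclusion $\Omega \subseteq \cap_{k,i}\overline{(\ran\psi_i\cap\ran\phi_k)\bar\otimes \cl U + \ran\phi_k\bar\otimes\cl V + \cl W}$ is immediate. For the reverse, I fix $X$ in the intersection on the left and decompose $X = \tilde{\psi}_i(X) + \tilde{\psi}_i^{\perp}(X)$. Since both $\psi_i$ and $\psi_i^{\perp}$ preserve every masa-bimodule (in particular each $\cl B_p$, and hence $\cl W$), Lemma \ref{l_pre} can be applied freely.

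For the orthogonal part, $\psi_i^{\perp}$ annihilates $\ran\psi_i$ and preserves $\ran\phi_k$, so Lemma \ref{l_pre} applied to the hypothesis on $X$ yields $\tilde{\psi}_i^{\perp}(X) \in \overline{\ran\phi_k\bar\otimes\cl V + \cl W}$ for every $k$. Intersecting over $k$ and invoking Proposition \ref{mazi}(v) for $r$ summands---part of the induction hypothesis in force at this stage---gives $\tilde{\psi}_i^{\perp}(X) \in \overline{\cl R\bar\otimes\cl V + \cl W}\subseteq \Omega$. For the complementary part, a parallel computation using that $\psi_i$ is the identity on $\ran\psi_i$ and sends $\ran\phi_k$ into $\ran\psi_i\cap\ran\phi_k$ gives $\tilde{\psi}_i(X) \in \overline{(\ran\psi_i\cap\ran\phi_k)\bar\otimes \overline{\cl U+\cl V} + \cl W}$ for every $k$. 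If $S$ is a weak* cluster point of $(\tilde{\psi}_i(X))_i$, the fact that $(\ran\psi_i)$ is nested then promotes this to $S \in \overline{(\ran\psi_{i_0}\cap\ran\phi_k)\bar\otimes\overline{\cl U+\cl V} + \cl W}$ for every $i_0$ and $k$.

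The principal technical step is to collapse these intersections down to $\cl N\cap\cl R$. Since $\ran\psi_{i_0}$ and $\ran\phi_k$ are ternary masa-bimodules, each of width at most two, Theorem \ref{epsilon} for $r$ summands---also part of the induction hypothesis---applied with $m=2$, $\cl C_1 = \ran\psi_{i_0}$, $\cl C_2 = \ran\phi_k$ and $\cl U$ replaced by $\overline{\cl U+\cl V}$, splits each of the above sets as $\overline{\ran\psi_{i_0}\bar\otimes\overline{\cl U+\cl V}+\cl W}\cap \overline{\ran\phi_k\bar\otimes\overline{\cl U+\cl V}+\cl W}$. Intersecting over $i_0$ and $k$ separately via Proposition \ref{mazi}(v) for $r$ summands, then re-applying Theorem \ref{epsilon}, produces $S\in \overline{(\cl N\cap\cl R)\bar\otimes\overline{\cl U+\cl V}+\cl W}\subseteq \Omega$. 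Since $\Omega$ is weak* closed and every $\tilde{\psi}_i^{\perp}(X)$ lies in $\Omega$, the cluster point $X - S$ of the sequence $(\tilde{\psi}_i^{\perp}(X))_i$ belongs to $\Omega$ as well, whence $X = S + (X-S) \in \Omega$. The only real subtlety is bookkeeping: the $r$-summand forms of (v) and Theorem \ref{epsilon} are essential for the cluster-point analysis, and these are precisely the statements that the ordering of the simultaneous induction places at our disposal at this moment.
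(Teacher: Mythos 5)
Your argument is correct and follows the same skeleton as the paper's proof: split $X$ as $\tilde{\psi}_i(X)+\tilde{\psi}_i^{\perp}(X)$, dispose of the $\tilde{\psi}_i^{\perp}$ part by Lemma \ref{l_pre} together with the $r$-summand case of Proposition \ref{mazi}~(v), and show that a weak* cluster point $S$ of the other part lies in $\overline{(\cl N\cap \cl R)\bar\otimes\overline{\cl U+\cl V}+\cl W}$. The one place you diverge is the collapsing step for $S$: the paper only records the diagonal information $\tilde{\psi}_k(X)\in\overline{(\ran\psi_k\cap\ran\phi_k)\bar\otimes\overline{\cl U+\cl V}+\cl W}$ and applies (v) at stage $r$ a single time to the nested sequence of contractive Schur idempotents $(\psi_k\phi_k)_{k\in\bb{N}}$, whose ranges are $\ran\psi_k\cap\ran\phi_k$ and intersect to $\cl N\cap\cl R$; you instead keep the full doubly-indexed family and collapse it by splitting each term with Theorem \ref{epsilon} at stage $r$ (using that ranges of contractive Schur idempotents, and their descending intersections, are ternary masa-bimodules of width at most two), applying (v) in each index separately, and recombining. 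Both routes are legitimate under the stated induction hypotheses, since (v) and Theorem \ref{epsilon} are assumed with $r$ summands at this point; the paper's diagonal trick is simply more economical, needing only (v), while your version leans on the heavier Theorem \ref{epsilon} but requires no observation about the diagonal subsequence.
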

\begin{proof}
Let 
$$\Omega = \overline{(\cl N\cap \cl R)\bar \otimes \cl U + \cl R \bar \otimes \cl V + \cl W}$$ 
and fix
$$X\in \cap _{k,i} \overline{(\ran \psi _i\cap \ran \phi_k )\bar \otimes \cl U + \ran \phi_k \bar \otimes \cl V + \cl W}.$$
By Lemma \ref{l_pre}, 
$\tilde{\psi}_i^\bot (X)\in \overline{\ran \phi _k\bar \otimes \cl V + \cl W}$ for all $k,i\in \bb{N}.$
By the inductive assumption concerning Proposition \ref{mazi} (v),
$$\tilde{\psi}_i^\bot (X)\in \overline{\cl R \bar\otimes \cl V + \cl W} \subseteq \Omega, \ \ \ \  i\in \bb{N}.$$
On the other hand, for all $k\in \bb{N}$ we have
\begin{eqnarray*}
\tilde{\psi}_k(X) & \in & 
\overline{(\ran\psi_k\cap\ran\phi_k) \bar\otimes \cl U + 
(\ran \psi_k\cap \ran \phi_k) \bar \otimes \cl V + \cl W}\\
& = & 
\overline{(\ran \psi _k\cap \ran \phi_k ) \bar \otimes (\overline{\cl U+\cl V}) + \cl W}.
\end{eqnarray*}
Let $S$ be a weak* cluster point of $(\tilde{\psi}_k (X))_{k\in \bb{N}}$.
Once again by the inductive assumption concerning Proposition \ref{mazi} (v),
$$S \in \overline{(\cl N \cap \cl R) \bar \otimes (\overline{\cl U+\cl V}) + \cl W}.$$
Thus, $S\in \Omega$
and the proof is complete.
\end{proof}

\begin{lemma}\label{triakyklosnn}
Proposition \ref{mazi} (ii) holds if the space $\cl W$ has $r$ terms.
\end{lemma}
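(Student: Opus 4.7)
The plan is to adapt the argument of Lemma \ref{triakyklos} (which handles the case $r = 0$), absorbing the extra term $\cl W$ at each stage by invoking the inductive hypotheses already in force. I would set $\cl N = \bigcap_i \ran\psi_i$, put
$$\Omega = \overline{\cl N\bar\otimes\cl U + \cl B\bar\otimes\cl V + \cl W},$$
fix $X \in \bigcap_i \overline{\ran\psi_i\bar\otimes\cl U + \cl B\bar\otimes\cl V + \cl W}$, and choose a decomposition scheme $(\phi_k, \theta_k, \cl M_k, \cl Z_k)_{k\in\bb{N}}$ for the nest algebra bimodule $\cl B$. The key observation is that $\cl B$, each $\ran\psi_i$, and each masa-bimodule $\cl B_j$ appearing in $\cl W$ is invariant under every Schur idempotent, so each Schur manipulation below will preserve the $\cl W$-summand; this is the only new ingredient compared with the case $r = 0$.

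I would then analyse $\tilde\phi_k(X)$ and $\tilde\phi_k^\perp(X)$ separately. For the first, Lemma \ref{l_pre} will yield
$$\tilde\phi_k(X) \in \bigcap_i\overline{(\ran\psi_i\cap\cl M_k)\bar\otimes\cl U + \cl M_k\bar\otimes\cl V + \cl W},$$
so any weak* cluster point $S$ of $(\tilde\phi_k(X))_k$ lies in $\bigcap_{k,i}\overline{(\ran\psi_i\cap\cl M_k)\bar\otimes\cl U + \cl M_k\bar\otimes\cl V + \cl W}$. Lemma \ref{diokyklos2}, which is Proposition \ref{mazi} (i) at the current level $r$, will collapse this intersection to $\overline{(\cl N\cap\cl M)\bar\otimes\cl U + \cl M\bar\otimes\cl V + \cl W}$, where $\cl M = \bigcap_k\cl M_k$. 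Since $\cl M \subseteq \cl B$, this is contained in $\Omega$.

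For the second, the containment $\cl B \subseteq \cl M_k + \cl Z_k$ gives $\phi_k^\perp(\cl B) \subseteq \cl Z_k$, so
$$\tilde\phi_k^\perp(X) \in \bigcap_i\overline{\ran\psi_i\bar\otimes\cl U + \cl Z_k\bar\otimes\cl V + \cl W}.$$
Writing $\theta_k = \sum_{p=1}^k \theta_{k,p}$ with each $\theta_{k,p}$ a contractive Schur idempotent whose range is contained in $\cl B$, I would argue that $\tilde\theta_k^\perp$ annihilates the $\cl Z_k$-term and produces an element of $\bigcap_i\overline{\ran\psi_i\bar\otimes\cl U + \cl W}$; the inductive hypothesis for Proposition \ref{mazi} (v) at level $r$ then identifies this as a member of $\overline{\cl N\bar\otimes\cl U + \cl W} \subseteq \Omega$. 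For each $p$, applying $\tilde\theta_{k,p}$ produces an element of $\bigcap_i \overline{(\ran\psi_i\cap\ran\theta_{k,p})\bar\otimes\cl U + \ran\theta_{k,p}\bar\otimes\cl V + \cl W}$, which, by Lemma \ref{diokyklos2} together with $\ran\theta_{k,p}\subseteq\cl B$, lies in $\Omega$. Summing, $\tilde\phi_k^\perp(X)\in\Omega$ for every $k$, and since $X - S$ is a weak* cluster point of $(\tilde\phi_k^\perp(X))_k$ while $\Omega$ is weak* closed, $X - S\in\Omega$; combined with $S\in\Omega$ this forces $X\in\Omega$, which is the inclusion to be proven (the converse is trivial).

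The only step with any subtlety is the invariance bookkeeping that ensures the $\cl W$-summand survives each Schur operation intact; once this is verified, the argument is a faithful transcription of the $r = 0$ proof with Lemma \ref{diokyklos2} and the inductive hypothesis for Proposition \ref{mazi} (v) substituted for their base-case counterparts.
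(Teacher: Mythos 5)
Your proposal is correct and follows the paper's own proof essentially verbatim: the same decomposition scheme for $\cl B$, the same splitting $X = \tilde\phi_k(X) + \tilde\phi_k^\perp(X)$, the same use of Lemma \ref{diokyklos2} (both for the cluster point of $(\tilde\phi_k(X))_k$ and, with a constant sequence, for the terms $\tilde\theta_{k,p}(\tilde\phi_k^\perp(X))$), and the same appeal to the inductive hypothesis for Proposition \ref{mazi} (v) to handle $\tilde\theta_k^\perp(\tilde\phi_k^\perp(X))$. The invariance bookkeeping for the $\cl W$-summand that you flag is exactly the point the paper relies on as well, so there is nothing further to add.
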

\begin{proof}
Let 
$$\Omega  = \overline{\cl N \bar \otimes \cl U + \cl B\bar \otimes \cl V + \cl W}$$ 
and fix
$$X\in \cap_i \overline{\ran \psi _i\bar \otimes \cl U+\cl B\bar \otimes \cl V + \cl W}.$$
Let $(\phi_k,\theta_k,\ran\phi_k,\cl Z_k)_{k\in \bb{N}}$ be a decomposition scheme for $\cl B$ and
observe that, by Lemma \ref{l_pre}, we have 
$$\tilde{\phi}_k(X) \in \cap_i \overline{(\ran \psi _i\cap \ran \phi _k)\bar \otimes \cl U+
\ran \phi _k\bar \otimes \cl V + \cl W}.$$
Letting $S$ be a weak* cluster point of the sequence 
$(\tilde{\phi}_k(X))_{k\in \bb{N}}$, we have that
$$S\in \cap_{k,i} \overline{(\ran \psi _i\cap \ran \phi _k)\bar \otimes \cl U+
\ran \phi _k\bar \otimes \cl V + \cl W}.$$
By Lemma \ref{diokyklos2}
and the fact that $\cl R = \cap_k \ran\phi_k \subseteq \cl B$, we have that
$S\in  \Omega.$ 
So it suffices to prove
that $\tilde{\phi}_k^\bot (X) \in \Omega $ for all $k\in \bb{N}$. Note that, by Lemma \ref{l_pre},
$$\tilde{\phi}_k^\bot (X)\in \cap _i\overline{ \ran \psi _i \bar \otimes \cl U+\cl Z_k\bar \otimes \cl V + \cl W}.$$
By the inductive assumption concerning Proposition \ref{mazi} (v) and
Lemma \ref{l_pre} again,
$$\tilde{\theta}_k^\bot (\tilde{\phi}_k^\bot (X))\in \cap_i\overline{\ran \psi _i\bar \otimes \cl U + \cl W} =
\overline{\cl N \bar \otimes \cl U + \cl W} \subseteq \Omega.$$
Write $\theta_k = \sum_{p=1}^k \theta_{k,p}$, 
where each $\theta_{k,p}$ is a contractive Schur idempotent whose range is contained in 
$\cl B$.
We have that 
$$\tilde{\theta}_{k,p}(\tilde{\phi}_k^\bot (X))\in
\cap _i\overline{(\ran \psi _i \cap \ran\theta_{k,p}) )\bar \otimes \cl U +
(\ran\theta_{k,p}) \bar \otimes \cl V + \cl W}.$$ By Lemma \ref{diokyklos2} (applied in the case 
of a constant sequence of maps with term $\theta_{k,p}$), we have 
$$\tilde{\theta}_{k,p}(\tilde{\phi}_k^\bot (X))\in \overline{\cl N \bar \otimes \cl U +
(\ran\theta_{k,p}) \bar \otimes \cl V + \cl W}\subseteq \Omega, \ \ p = 1,\dots,k.$$
It follows that 
$\tilde{\theta}_k(\tilde{\phi}_k^\bot (X)) \in \Omega$ and hence
$\tilde{\phi}_k^\bot (X) = \tilde{\theta}_k(\tilde{\phi}_k^\bot (X)) + \tilde{\theta}_k^{\bot}(\tilde{\phi}_k^\bot (X))\in \Omega$. 
\end{proof}

\begin{lemma}\label{alphakyklos2}
Proposition \ref{mazi} (iii) holds if the space $\cl W$ has $r$ terms.
\end{lemma}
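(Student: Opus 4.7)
The plan is to mirror the proof of Lemma \ref{alphakyklos} (the case $r=0$), using the inductive availability of Proposition \ref{mazi}(v) with $r$ summands in $\cl W$ in place of the direct appeals to Lemma \ref{l_intem} made there. Setting $\cl N=\cap_i\ran\psi_i$ and
$$\Omega=\overline{\cl N\bar\otimes\cl U+(\cl N\cap\cl B)\bar\otimes\cl V+\cl W},$$
I would fix an element $X$ of the left-hand intersection and let $(\phi_k,\theta_k,\cl M_k,\cl Z_k)_{k\in\bb{N}}$ be a decomposition scheme for the nest algebra bimodule $\cl B$, so that $\cl Z_k\subseteq\cl B\subseteq\cl Z_k+\cl M_k$, $\phi_k\theta_k=0$, $\cl M_{k+1}\subseteq\cl M_k$ and $\cap_k\cl M_k\subseteq\cl B$. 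Writing $X=\tilde{\phi}_k(X)+\tilde{\phi}_k^\bot(X)$, it then suffices to show that $\tilde{\phi}_k^\bot(X)\in\Omega$ for each $k$ and that every weak* cluster point of $(\tilde{\phi}_k(X))_k$ lies in $\Omega$.

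First I would analyse $\tilde{\phi}_k^\bot(X)$. Lemma \ref{l_pre}, together with the identity $\phi_k^\bot(\cl B)\subseteq\cl Z_k$ built into the scheme, places
$$\tilde{\phi}_k^\bot(X)\in\cap_i\overline{\ran\psi_i\bar\otimes\cl U+(\ran\psi_i\cap\cl Z_k)\bar\otimes\cl V+\cl W}.$$
Splitting through $\id=\theta_k+\theta_k^\bot$, the $\tilde{\theta}_k^\bot$-part annihilates the middle summand (since $\theta_k$ is the identity on $\cl Z_k$), leaves the outer ones invariant, and hence, by the inductive Proposition \ref{mazi}(v), lies in $\overline{\cl N\bar\otimes\cl U+\cl W}\subseteq\Omega$. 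For the $\tilde{\theta}_k$-part, $\theta_k$ carries $\ran\psi_i$ into $\ran\psi_i\cap\cl Z_k$, so applying Proposition \ref{mazi}(v) with the decreasing family of contractive Schur idempotents whose ranges are $\ran\psi_i\cap\cl Z_k$ gives
$$\tilde{\theta}_k(\tilde{\phi}_k^\bot(X))\in\overline{(\cl N\cap\cl Z_k)\bar\otimes\overline{\cl U+\cl V}+\cl W}\subseteq\Omega,$$
using $\cl Z_k\subseteq\cl B$.

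Next I would treat $\tilde{\phi}_k(X)$. Lemma \ref{l_pre} and the inclusions $\phi_k(\ran\psi_i),\phi_k(\ran\psi_i\cap\cl B)\subseteq\ran\psi_i\cap\cl M_k$ yield
$$\tilde{\phi}_k(X)\in\cap_i\overline{(\ran\psi_i\cap\cl M_k)\bar\otimes\overline{\cl U+\cl V}+\cl W},$$
which by Proposition \ref{mazi}(v) equals $\overline{(\cl N\cap\cl M_k)\bar\otimes\overline{\cl U+\cl V}+\cl W}$. A second application of Proposition \ref{mazi}(v), to the nested family $(\cl N\cap\cl M_k)_k$, then places any weak* cluster point $S$ of $(\tilde{\phi}_k(X))_k$ in $\overline{(\cl N\cap\cl M)\bar\otimes\overline{\cl U+\cl V}+\cl W}$, where $\cl M=\cap_k\cl M_k\subseteq\cl B$; thus $\cl N\cap\cl M\subseteq\cl N\cap\cl B$ and $S\in\Omega$. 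The standard cluster-point argument displayed just before Lemma \ref{diokyklos} then yields $X\in\Omega$, while the reverse inclusion is trivial.

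The main obstacle I anticipate is the justification, at each use of the inductive Proposition \ref{mazi}(v), that intersections of the form $\ran\psi_i\cap\cl Z_k$, $\ran\psi_i\cap\cl M_k$ and $\cl N\cap\cl M_k$ are ranges of contractive Schur idempotents arranged as nested sequences; this reduces to the observation that intersections of ternary masa-bimodules are again ternary masa-bimodules, which therefore decompose in the block form $\oplus_j\cl B(E_jH_1,F_jH_2)$ recorded at the start of Section \ref{s_insum}.
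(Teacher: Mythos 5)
Your overall architecture is the right one and matches the paper's: split $X$ as $\tilde\phi_k(X)+\tilde\phi_k^\perp(X)$, handle the off-diagonal part of $\tilde\phi_k^\perp(X)$ via $\theta_k$, and pass to a weak* cluster point of $(\tilde\phi_k(X))_k$. But the step you yourself flag as the main obstacle is a genuine gap, and your proposed resolution of it is incorrect. Proposition \ref{mazi}(v) is stated, and proved, only for nested sequences of \emph{contractive} Schur idempotents: its proof runs through Lemma \ref{l_intem}, which needs the ranges to be ternary masa-bimodules, and ranges of contractive Schur idempotents are exactly the block-form spaces $\oplus_j\cl B(E_jH_1,F_jH_2)$ with both families of projections mutually orthogonal. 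The space $\cl Z_k=\ran\theta_k$ is not of this kind: $\theta_k=\sum_{p=1}^k\theta_{k,p}$ is the strictly off-diagonal part of the scheme, its norm is only bounded by $k$, and its range repeats the same column projection across several row blocks, so it is neither of block form nor even a ternary masa-bimodule (the strictly upper-triangular pattern violates the TRO condition on patterns). Consequently $\ran\psi_i\cap\cl Z_k$ need not be the range of a contractive Schur idempotent, and your claim that ``intersections of ternary masa-bimodules are again ternary masa-bimodules, which therefore decompose in the block form'' fails on both counts (and, in any case, general ternary masa-bimodules contain $M_{l,k}(\cl D)$ summands over continuous masas, which are not ranges of Schur idempotents at all). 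So your application of Proposition \ref{mazi}(v) to the family $(\ran\psi_i\cap\cl Z_k)_i$ is unjustified. The paper avoids this by first writing $\theta_k=\sum_{p=1}^k\theta_{k,p}$, where each strip $\theta_{k,p}$ \emph{is} a contractive Schur idempotent, and applying Proposition \ref{mazi}(v) to each nested family $(\ran\psi_i\cap\ran\theta_{k,p})_i$ separately; since $\ran\theta_{k,p}\subseteq\cl Z_k\subseteq\cl B$, each piece lands in $\Omega$ and one sums over $p$.

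A smaller instance of the same problem occurs in your treatment of the cluster point $S$: you apply Proposition \ref{mazi}(v) a second time to the family $(\cl N\cap\cl M_k)_k$, but $\cl N=\cap_i\ran\psi_i$ is only a countable intersection of ranges of contractive Schur idempotents and need not itself be such a range, so $\cl N\cap\cl M_k$ is again not an admissible range for (v). The paper sidesteps this by a diagonal argument: for $k'\geq k$ one has $\tilde\phi_{k'}(X)\in\overline{(\ran\psi_k\cap\cl M_k)\bar\otimes\overline{\cl U+\cl V}+\cl W}$, the maps $\psi_k\phi_k$ form a nested sequence of contractive Schur idempotents with ranges $\ran\psi_k\cap\cl M_k$ and intersection $\cl N\cap(\cap_k\cl M_k)$, and a single application of (v) to this sequence places $S$ in $\Omega$. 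Both defects are repairable, but as written the proof does not go through.
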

\begin{proof}
We let 
$$\Omega = \overline{\cl N \bar \otimes \cl U+(\cl N \cap \cl B)\bar \otimes \cl V + \cl W}$$ 
and fix
$$X\in \cap_i \overline{\ran \psi_i\bar \otimes \cl U + (\ran \psi _i\cap \cl B)\bar \otimes \cl V + \cl W}.$$
Let $(\phi_k,\theta_k,\cl M_k,\cl Z_k)_{k\in \bb{N}}$ be a decomposition scheme for $\cl B$ and  
observe that
$$\tilde{\phi}_k^\bot(X)\in \cap _i\overline{\ran \psi _i\bar \otimes \cl U + (\ran \psi _i\cap \cl Z_k)
\bar \otimes \cl V + \cl W}.$$
Using the inductive assumption concerning Proposition \ref{mazi} (v), we have
$$\tilde{\theta}_k^\bot(\tilde{\phi}_k^\bot(X)) \in \cap _i \overline{\ran \psi_i \bar \otimes \cl U + \cl W}
= \overline{\cl N \bar \otimes \cl U + \cl W}\subseteq \Omega.$$
Write
$\theta_k = \sum_{p=1}^k \theta_{k,p}$ as in the proof of Lemma \ref{triakyklosnn};
then
\begin{eqnarray*}
\tilde{\theta}_{k,p}(\tilde{\phi}_k^\bot(X)) 
& \in & \cap _i\overline{(\ran \psi _i \cap \ran\theta_{k,p})\bar \otimes \cl U 
+ (\ran \psi _i\cap \ran \theta_{k,p})
\bar \otimes \cl V + \cl W}\\
& = & 
\cap _i\overline{(\ran \psi _i \cap \ran\theta_{k,p})\bar \otimes (\cl U 
+ \cl V) + \cl W}.
\end{eqnarray*}
By the inductive assumption concerning Proposition \ref{mazi} (v), we have
that $\tilde{\theta}_{k,p}$ $(\tilde{\phi}_k^\bot(X))$ $\in$ $\Omega$ for each 
$p = 1,\dots,k$. It follows that 
$\tilde{\phi}_k^\bot (X) \in \Omega .$
Let $S$ be a weak* cluster point of the sequence $(\tilde{\phi_k}(X))_{k\in \bb{N}}$.
Since
$$\tilde{\phi}_k (X) \in \cap _k \overline{(\ran \psi _k\cap \cl M_k)\bar \otimes
\overline{\cl U+\cl V} + \cl W}, $$ 
by the same inductive assumption once again,  
$$S\in \overline{(\cl N \cap(\cap _k \cl M_k))\bar \otimes
\overline{\cl U+\cl V} + \cl W}\subseteq \Omega .$$
\end{proof}

\begin{lemma}\label{vitakyklos2}
Proposition \ref{mazi} (iv) holds if the space $\cl W$ has $r$ terms.
\end{lemma}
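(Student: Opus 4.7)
The plan is to mirror the proof of Lemma \ref{vitakyklos}, which establishes the case $r=0$, while carrying along the $r$-term sum $\cl W$. The induction will be on the width $n$ of $\cl B$, and the base case $n=1$ is exactly Lemma \ref{alphakyklos2}.

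For the inductive step, write $\cl B = \cap_{l=1}^n \cl A_l$ with each $\cl A_l$ a nest algebra bimodule, set $\Omega = \overline{\cl N \bar\otimes \cl U + (\cl N \cap \cl B)\bar\otimes \cl V + \cl W}$, and fix $X$ in the intersection side of the statement. Since $\ran\psi_i \cap \cl B$ is contained in both $\ran\psi_i \cap (\cap_{l=1}^{n-1}\cl A_l)$ and $\ran\psi_i \cap \cl A_n$, the width-$(n-1)$ inductive hypothesis together with Lemma \ref{alphakyklos2} place $X$ in both
$$\overline{\cl N \bar\otimes \cl U + (\cl N \cap (\cap_{l=1}^{n-1}\cl A_l))\bar\otimes \cl V + \cl W} \ \text{ and } \ \overline{\cl N \bar\otimes \cl U + (\cl N \cap \cl A_n)\bar\otimes \cl V + \cl W}.$$
I then fix a decomposition scheme $(\phi_k,\theta_k,\cl M_k,\cl Z_k)_{k\in \bb{N}}$ for $\cl A_n$ and split $X = \tilde{\phi}_k(X) + \tilde{\phi}_k^{\perp}(X)$.

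Each of $\cl N$, $\cap_{l=1}^{n-1}\cl A_l$, and the summands of $\cl W$ is a masa-bimodule, hence invariant under every Schur idempotent by \cite[Proposition 3.2]{eletod}. Combined with Lemma \ref{l_pre} (i) and the identity $\phi_k^{\perp}(\cl A_n) \subseteq \cl Z_k$ encoded in the decomposition scheme, this invariance locates $\tilde{\phi}_k^{\perp}(X)$ in the intersection of the first displayed space above with $\overline{\cl N \bar\otimes \cl U + (\cl N \cap \cl Z_k)\bar\otimes \cl V + \cl W}$. Splitting further via $\tilde{\theta}_k$ and $\tilde{\theta}_k^{\perp}$ and using $\cl Z_k \subseteq \cl A_n$ delivers $\tilde{\phi}_k^{\perp}(X) \in \Omega$ for every $k$. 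A weak* cluster point $S$ of $(\tilde{\phi}_k(X))_{k \in \bb{N}}$ satisfies
$$S \in \cap_k \overline{(\ran\psi_k \cap \cl M_k)\bar\otimes \cl U + (\ran\psi_k \cap \cl M_k \cap (\cap_{l=1}^{n-1}\cl A_l))\bar\otimes \cl V + \cl W};$$
since contractive Schur idempotents commute pairwise, $(\ran\psi_k \cap \cl M_k)_{k}$ is a nested sequence of ranges of contractive Schur idempotents, and the width-$(n-1)$ inductive hypothesis (applied to $\cap_{l=1}^{n-1}\cl A_l$) places $S$ inside $\overline{(\cl N\cap(\cap_k\cl M_k))\bar\otimes\cl U + (\cl N\cap(\cap_k\cl M_k)\cap(\cap_{l=1}^{n-1}\cl A_l))\bar\otimes\cl V + \cl W}$, which is contained in $\Omega$ because $\cap_k\cl M_k\subseteq\cl A_n$.

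The main obstacle is the careful bookkeeping: verifying at each stage that the intermediate spaces are invariant under the relevant Schur idempotents (which relies on the masa-bimodule character of $\cl N$, the $\cl A_l$'s and the $\cl B_i$'s), that Lemma \ref{l_pre} transfers the decomposition-scheme identities into the tensor-product setting, and that the auxiliary sequence $(\ran\psi_k\cap \cl M_k)_k$ still satisfies the hypotheses of Proposition \ref{mazi} (iv) so that the inductive assumption may be reinvoked at the cluster-point stage.
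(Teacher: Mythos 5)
Your proposal is correct and follows essentially the same route as the paper: induction on the width of $\cl B$ with base case Lemma \ref{alphakyklos2}, the same two-space localisation of $X$ via the inductive hypothesis, the same splitting $X = \tilde{\phi}_k(X) + \tilde{\phi}_k^{\perp}(X)$ along a decomposition scheme for $\cl A_n$, and the same cluster-point argument reinvoking the width-$(n-1)$ case for the nested sequence $(\ran\psi_k\cap\cl M_k)_k$. The only (harmless) divergence is that you handle $\tilde{\phi}_k^{\perp}(X)$ by the further $\tilde{\theta}_k$/$\tilde{\theta}_k^{\perp}$ splitting as in the $r=0$ case, whereas the paper gets the same conclusion in one step from Lemma \ref{l_o2}.
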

\begin{proof}
Let 
$$\Omega = \overline{\cl N  \bar \otimes \cl U+(\cl N \cap \cl B)\bar \otimes \cl V + \cl W}.$$ 
We use induction on the width $n$ of $\cl B.$ The case $n=1$ reduces to Lemma \ref{alphakyklos2}.
Suppose that the statement holds for masa-bimodules of width at most $n-1$ and let
$\cl B = \cap _{l=1}^n\cl C_l$ where every $\cl C_l$ is nest algebra bimodule, $l = 1,\dots,n$. 
Fix
$$X\in  \cap_i \overline{ \ran \psi _i\bar \otimes \cl U + (\ran \psi _i\cap \cl B)\bar \otimes \cl V + \cl W}.$$
By the inductive assumption, $X$ belongs to the intersection of
$$\overline{\cl N 
\bar \otimes \cl U + (\cl N \cap (\cap _{l=1}^{n-1}\cl C_l))\bar \otimes \cl V + \cl W} \ 
\mbox{ and } \ 
\overline{\cl N \bar \otimes \cl U + (\cl N \cap \cl C_n)\bar \otimes \cl V + \cl W}.$$
Let $(\phi_k,\theta_k,\cl M_k,\cl Z_k)_{k\in \bb{N}}$ be a decomposition scheme for $\cl C_n$.
For a fixed $k\in \bb{N}$, we have that $\tilde{\phi}_k^\bot (X) $ belongs to the intersection of
$$\overline{\cl N
\bar \otimes \cl U + (\cl N \cap (\cap _{l=1}^{n-1}\cl C_l))\bar \otimes \cl V + \cl W} \ 
\mbox{ and } 
\overline{\cl N \bar \otimes \cl U + (\cl N \cap \cl Z_k)\bar \otimes \cl V + \cl W}.$$
By Lemma \ref{l_o2}, 
$$\tilde{\phi}_k^\bot (X) \in 
\overline{ \cl N \bar \otimes \cl U + 
(\cl N \cap (\cap _{i=1}^{n-1}\cl C_i)\cap \cl Z_k)\bar \otimes \cl U + \cl W}
\subseteq \Omega. $$
Let $S$ be a weak* cluster point of the sequence $(\tilde{\phi}_k(X))_{k\in \bb{N}}$; 
we have
$$S\in \cap _{k}\overline{(\ran \psi_k\cap \cl M_k) \bar \otimes \cl U
+ (\ran \psi _k\cap \cl M_k \cap (\cap _{l=1}^{n-1}\cl C_l))
\bar \otimes \cl V  + \cl W}.$$ 
By the inductive assumption, the latter space is equal to
$$\overline{(\cl N \cap (\cap _{k} \cl M_k) ) \bar \otimes \cl U+(\cl N \cap (\cap _k\cl M_k)
\cap (\cap _{l=1}^{n-1}\cl C_l))\bar \otimes \cl V + \cl W}$$ which is contained in $\Omega$ since
$\cap _k\cl M_k \subseteq \cl C_n$.
\end{proof}

\begin{lemma}\label{4kyklos2}
Proposition \ref{mazi} (v) holds if the space $\cl W$ has $r+1$ terms.
\end{lemma}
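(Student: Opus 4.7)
The plan is to mirror the structure of Lemma \ref{4kyklos}, replacing every invocation of an $r=0$ intersection lemma by its $r$-summand counterpart established in Lemmas \ref{diokyklos2}--\ref{vitakyklos2}, together with the outstanding inductive hypothesis on Proposition \ref{mazi}(v) for $r$ summands. Write the $(r+1)$-st summand of $\cl W$ as $\cl B\bar\otimes\cl V$ (so $\cl B=\cl B_{r+1}$, $\cl V=\cl U_{r+1}$) and set $\cl W'=\sum_{i=1}^r \cl B_i\bar\otimes\cl U_i$; the task becomes showing that any $X\in\cap_i \overline{\ran\psi_i\bar\otimes\cl U + \cl B\bar\otimes\cl V + \cl W'}$ lies in $\Omega = \overline{\cl N\bar\otimes\cl U + \cl B\bar\otimes\cl V + \cl W'}$. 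I would induct on the width $n$ of $\cl B$. The base case $n=1$ is immediate from Lemma \ref{triakyklosnn} (Proposition \ref{mazi}(ii) with $r$ summands).

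For the inductive step, write $\cl B=\cap_{l=1}^n\cl A_l$ with each $\cl A_l$ a nest algebra bimodule. By the inductive hypothesis on $n$ applied to the bimodule $\cap_{l=1}^{n-1}\cl A_l$, and by Lemma \ref{triakyklosnn} applied to $\cl A_n$, the element $X$ lies in both
$$\overline{\cl N\bar\otimes\cl U + (\cap_{l=1}^{n-1}\cl A_l)\bar\otimes\cl V + \cl W'} \ \text{ and } \ \overline{\cl N\bar\otimes\cl U + \cl A_n\bar\otimes\cl V + \cl W'}.$$
Fix a decomposition scheme $(\phi_k,\theta_k,\cl M_k,\cl Z_k)_{k\in\bb{N}}$ for $\cl A_n$. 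Using Lemma \ref{l_pre}, the invariance of masa-bimodules under Schur maps, and the fact that $\phi_k^\perp(\cl A_n)\subseteq\cl Z_k$, one sees that $\tilde{\phi}_k^\perp(X)$ lies in the intersection of the space obtained by replacing $\cl A_n$ with $\cl Z_k$ in the second display above and the first display as it stands. Splitting further via $\tilde{\theta}_k$: since $\theta_k^\perp$ annihilates $\cl Z_k\bar\otimes\cl V$, one gets $\tilde{\theta}_k^\perp\tilde{\phi}_k^\perp(X)\in\overline{\cl N\bar\otimes\cl U + \cl W'}\subseteq \Omega$; and since $\theta_k$ has range in $\cl Z_k$, Lemma \ref{l_o2} combined with the first display places $\tilde{\theta}_k\tilde{\phi}_k^\perp(X)$ in $\overline{\cl N\bar\otimes\cl U + ((\cap_{l=1}^{n-1}\cl A_l)\cap\cl Z_k)\bar\otimes\cl V + \cl W'}\subseteq \Omega$, because $(\cap_{l=1}^{n-1}\cl A_l)\cap\cl Z_k\subseteq\cl B$. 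Hence $\tilde{\phi}_k^\perp(X)\in\Omega$ for every $k$.

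For $\tilde{\phi}_k(X)$, let $S$ be a weak* cluster point along a subsequence $(k_j)$ and write $S = S' + S''$, where $S''$ is a cluster point of $\tilde{\psi}_{k_j}^\perp\tilde{\phi}_{k_j}(X) = \tilde{\phi}_{k_j}\tilde{\psi}_{k_j}^\perp(X)$ (using that $\phi_{k_j}$ and $\psi_{k_j}$ commute as Schur multipliers); since $\psi_{k_j}^\perp(\ran\psi_{k_j}) = \{0\}$, this lies in $\overline{\cl B\bar\otimes\cl V + \cl W'}\subseteq\Omega$. The remaining $S'$ is a cluster point of $\tilde{\psi}_{k_j}\tilde{\phi}_{k_j}(X)$, which sits in
$$\overline{(\ran\psi_{k_j}\cap\cl M_{k_j})\bar\otimes\cl U + ((\cap_{l=1}^{n-1}\cl A_l)\cap\ran\psi_{k_j}\cap\cl M_{k_j})\bar\otimes\cl V + \cl W'}.$$
Applying Lemma \ref{vitakyklos2} (that is, Proposition \ref{mazi}(iv) with $r$ summands, just established) to the nested sequence $(\ran\psi_{k_j}\cap\cl M_{k_j})_j$ places $S'$ in $\overline{(\cl N\cap\cl M)\bar\otimes\cl U + ((\cap_{l=1}^{n-1}\cl A_l)\cap\cl N\cap\cl M)\bar\otimes\cl V + \cl W'}\subseteq\Omega$, where $\cl M = \cap_k\cl M_k\subseteq\cl A_n$. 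The principal obstacle is essentially organisational: at every passage to a cluster point one must verify that the intersection formula being invoked is one of the $r$-summand statements already available, and never the $(r+1)$-summand form that is itself the current goal; this dictates the precise order in which $\tilde{\phi}_k,\tilde{\theta}_k,\tilde{\psi}_k$ are applied and which of the two inclusions above is used at each step.
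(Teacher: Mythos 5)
Your proof is correct and follows the paper's argument almost step for step: the same induction on the width of $\cl B$, the same split $X=\tilde{\phi}_k(X)+\tilde{\phi}_k^{\perp}(X)$ along a decomposition scheme for the last nest algebra bimodule, Lemma \ref{l_o2} for the $\tilde{\phi}_k^{\perp}$ part, and Proposition \ref{mazi} (iv) with $r$ summands applied to the nested contractive idempotents $\phi_k\psi_k$ for the cluster point of $(\tilde{\psi}_k\tilde{\phi}_k(X))_k$. The only divergence is your treatment of the terms $\tilde{\psi}_k^{\perp}\tilde{\phi}_k(X)$: the paper places them in $\overline{((\cap_{l=1}^{n-1}\cl A_l)\cap\cl M_k)\bar\otimes\cl V+\cl W'}$ and then invokes the $r$-summand versions of Proposition \ref{mazi} (v) and Proposition \ref{delta} to locate the cluster point, whereas you observe (exactly as in Lemma \ref{4kyklos}) that each term already lies in $\overline{\cl B\bar\otimes\cl V+\cl W'}\subseteq\Omega$ via the commutation $\tilde{\psi}_k^{\perp}\tilde{\phi}_k=\tilde{\phi}_k\tilde{\psi}_k^{\perp}$ --- a mild, equally valid simplification.
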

\begin{proof}
Let
$\Omega = \overline{\cl N \bar \otimes \cl U + \cl W}$.
Set $\cl B = \cl B_{r+1}$, $\cl V = \cl U_{r+1}$ and $\cl W_0 = \sum_{i=1}^r \cl B_i\bar\otimes\cl U_i$.
We use induction on the width $n$ of $\cl B$.
For $n=1$ the conclusion 
follows from the inductive assumption concerning Proposition \ref{mazi} (ii). 
Assume that it holds when the length of $\cl B$ does not exceed $n-1$, and 
suppose that $\cl B=\cap _{l=1}^n\cl C_l$
where $\cl C_l$  is a nest algebra bimodule, $l = 1,\dots,n$.
Let $(\phi_k,\theta_k,\cl M_k,\cl Z_k)_{k\in \bb{N}}$ be a decomposition scheme for $\cl C_n$.
Fix $$X\in \cap_i \overline{\ran \psi _i\bar \otimes \cl U + \cl B\bar \otimes \cl V + \cl W_0}.$$
By assumption,
$$X\in \overline{\cl N \bar \otimes \cl U + (\cap _{l=1}^{n-1}\cl C_l)\bar \otimes \cl V + \cl W_0} 
\cap 
\overline{\cl N \bar \otimes \cl U + \cl C_n\bar \otimes \cl V + \cl W_0}.$$
For a fixed $k\in \bb{N}$, we have 
$$\tilde{\phi}_k^\bot (X) \in \overline{\cl N \bar \otimes \cl U + (\cap _{l=1}^{n-1}\cl C_l)\bar \otimes \cl V + \cl W_0}
\cap \overline{\cl N \bar\otimes \cl U + \cl Z_k\bar \otimes \cl V + \cl W_0}.$$
By Lemma \ref{l_o2}, $\tilde{\phi}_k^\bot (X) \in \Omega .$
Let $S$ be a weak* cluster point of the sequence $(\tilde{\phi}_k(X))_{k\in\bb{N}}$,
and $S'$ and $S''$ be weak* cluster points of 
$(\tilde{\psi}_k^{\bot}(\tilde{\phi}_k(X)))_{k\in \bb{N}}$ and 
$(\tilde{\psi}_k(\tilde{\phi}_k$ $(X)))_{k\in \bb{N}}$, respectively, such that $S = S' + S''$.

We have 
$$\tilde{\phi}_k(X) \in \overline{(\cl M_k \cap  \cl N) \bar \otimes \cl U 
+ ((\cap _{l=1}^{n-1}\cl C_l)\cap \cl M_k)\bar \otimes \cl V + \cl W_0}.$$
It follows that
$$\tilde{\psi}_k^\bot (\tilde{\phi}_k(X) )\in 
\overline{((\cap _{l=1}^{n-1}\cl C_l)\cap \cl M_k)\bar \otimes \cl V + \cl W_0}, 
\ \ k\in \bb{N},$$
and hence, by the inductive assumption concerning Proposition \ref{mazi} (v),
$$S' \in \overline{(\cap _{l=1}^{n-1}\cl C_l) \bar \otimes \cl V + \cl W_0} \cap
\overline{(\cap_{k}\cl M_k) \bar \otimes \cl V + \cl W_0}.$$
It follows from
the inductive assumption concerning Proposition \ref{delta} that
$S' \in \Omega$.
On the other hand, 
$$S'' \in \cap _{k}\overline{(\cl M_k\cap \ran \psi _k)\bar \otimes \cl U +
((\cap _{l=1}^{n-1}\cl C_l)\cap \cl M_k\cap \ran \psi _k)\bar \otimes\cl V + \cl W_0}.$$
Since $\|\phi_k\psi_k\|\leq 1$ for each $k\in \bb{N}$, 
Lemma \ref{vitakyklos2} implies that $S''\in \Omega$. It now follows that $S\in \Omega$. 
\end{proof}

\begin{lemma}\label{lgama2}
Proposition \ref{gama} holds if the space $\cl W$ has $r+1$ terms.
\end{lemma}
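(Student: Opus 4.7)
The plan is to follow the proof of Lemma \ref{lgama} verbatim, replacing the single tensor product $\cl B\bar\otimes\cl V$ by the general sum $\cl W=\sum_{i=1}^{r+1}\cl B_i\bar\otimes\cl U_i$, and invoking the newly available $(r+1)$-summand case of Proposition \ref{mazi} (v) in place of its base case Lemma \ref{4kyklos}. Set $\Omega=\overline{(\cl M\cap\cl C)\bar\otimes\cl U+\cl W}$; the inclusion $\Omega\subseteq\overline{\cl M\bar\otimes\cl U+\cl W}\cap\overline{\cl C\bar\otimes\cl U+\cl W}$ is trivial, so fix $X\in \overline{\cl M\bar\otimes\cl U+\cl W}\cap\overline{\cl C\bar\otimes\cl U+\cl W}$ and let $(\psi_k,\theta_k,\ran\psi_k,\cl Z_k)_{k\in\bb{N}}$ be a decomposition scheme for $\cl C$. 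Since $\cl M$ and each $\cl B_i$ are masa-bimodules, they are invariant under every Schur idempotent; in particular $\cl W$ is $\tilde{\eta}$-invariant for any such $\eta$.

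First I would handle the off-diagonal piece. Using $\psi_k^{\perp}(\cl C)\subseteq \cl Z_k$, Lemma \ref{l_pre} places $\tilde{\psi}_k^{\perp}(X)$ in $\overline{\cl M\bar\otimes\cl U+\cl W}\cap\overline{\cl Z_k\bar\otimes\cl U+\cl W}$. Lemma \ref{l_o2} applied with $\phi=\theta_k$ (whose range is $\cl Z_k$) then yields
$$\tilde{\psi}_k^{\perp}(X)\in \overline{(\cl M\cap\cl Z_k)\bar\otimes\cl U+\cl W}\subseteq \Omega,\qquad k\in\bb{N}.$$

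For the diagonal piece, the ternarity of $\cl M$ provides a descending sequence $(\cl M_k)_{k\in\bb{N}}$ of ranges of contractive Schur idempotents with $\cl M=\cap_k\cl M_k$ (as recalled in the proof of Lemma \ref{ldelta}). Since $\psi_k(\cl M)\subseteq \cl M_k\cap\ran\psi_k$, a further application of Lemma \ref{l_pre} gives $\tilde{\psi}_k(X)\in\overline{(\cl M_k\cap\ran\psi_k)\bar\otimes\cl U+\cl W}$. The sequence $(\cl M_k\cap\ran\psi_k)_{k\in\bb{N}}$ is nested decreasing, and each of its terms is the range of a contractive Schur idempotent (the composition of the Schur idempotents defining $\cl M_k$ and $\ran\psi_k$). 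If $S$ is a weak* cluster point of the bounded sequence $(\tilde{\psi}_k(X))_{k\in\bb{N}}$, then the $(r+1)$-summand case of Proposition \ref{mazi} (v), proved just above in Lemma \ref{4kyklos2}, identifies
$$\bigcap_{k}\overline{(\cl M_k\cap\ran\psi_k)\bar\otimes\cl U+\cl W}=\overline{(\cl M\cap\cl N)\bar\otimes\cl U+\cl W},$$
where $\cl N=\cap_k\ran\psi_k\subseteq \cl C$; hence $S\in\Omega$. Since $\Omega$ is weak* closed and $\tilde{\psi}_k^{\perp}(X)\in\Omega$ for all $k$, the identity $X=\tilde{\psi}_k(X)+\tilde{\psi}_k^{\perp}(X)$ shows that $X-S$ is a weak* cluster point of a sequence lying in $\Omega$, so $X=S+(X-S)\in\Omega$.

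The main obstacle is the bookkeeping of the joint induction: passing to cluster points of $(\tilde{\psi}_k(X))_k$ requires Proposition \ref{mazi} (v) with the full $r+1$ summands rather than $r$, and it is exactly the placement of Lemma \ref{4kyklos2} before Lemma \ref{lgama2} that makes this invocation legitimate. Beyond this logistical point, the argument is a direct transcription of the strategy of Lemma \ref{lgama}, with Lemmas \ref{l_pre} and \ref{l_o2} doing the same work.
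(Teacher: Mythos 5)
Your proof is correct and follows essentially the same route as the paper's: both split $X$ along a decomposition scheme for $\cl C$, dispose of the off-diagonal part via Lemma \ref{l_o2}, and place a weak* cluster point of the diagonal part $\tilde{\psi}_k(X)$ into $\Omega$ via the $(r+1)$-summand case of Proposition \ref{mazi} (v) (Lemma \ref{4kyklos2}). The paper merely organizes the pieces slightly differently (a four-term decomposition using $\tilde{\phi}_k$, $\tilde{\psi}_k$ and $\tilde{\theta}_k$ instead of your two-term $\tilde{\psi}_k+\tilde{\psi}_k^{\perp}$), which changes nothing essential.
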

\begin{proof}
Let $\Omega = \overline{(\cl M\cap \cl C)\bar \otimes \cl U + \cl W}$ and 
fix 
$$X\in \overline{\cl M\bar \otimes \cl U + \cl W}\cap
\overline{ \cl C\bar \otimes \cl U + \cl W}.$$
Let $(\psi_k,\theta_k,\ran\psi_k,\cl Z_k)_{k\in \bb{N}}$ be a decomposition scheme for $\cl C$.
Let $(\phi_k)_{k\in \bb{N}}$ be a nested sequence  of
contractive Schur idempotents such that $\cl M = \cap_{k=1}^{\infty} \cl M_k$,
where $\cl M_k = \ran\phi_k$, $k\in \bb{N}$.
We first observe that, by Lemma \ref{l_o2}, 
$$\tilde{\theta}_k(X) \in 
\overline{(\cl Z_k \cap \cl M) \bar \otimes \cl U + \cl W}\subseteq \Omega, \ \ \ 
k\in \bb{N}.$$
On the otehr hand, by Lemma \ref{l_pre}, 
$$(\tilde{\psi}_k + \tilde{\theta}_k)^{\perp}(X) \in \cl W\subseteq \Omega
\ \mbox{ and } \ \tilde{\phi}_k^{\perp}(X)\in \cl W\subseteq \Omega, \ \ \ k\in \bb{N}.$$ 
It follows that 
$\tilde{\phi}_k^{\perp}(\tilde{\psi}_k(X)) = \tilde{\psi}_k(\tilde{\phi}_k^{\perp}(X))\in \Omega$
for each $k$.
Let   $S$ be a weak* cluster point of 
$(\tilde{\phi}_k (\tilde{\psi}_k(X)))_{k\in \bb{N}}$. 
Since 
$$\tilde{\phi}_k(\tilde{\psi}_k(X))\in \overline{(\cl M_k\cap \ran \psi _k)\bar \otimes \cl U + \cl W}$$ for all $k\in \bb{N}$, we have, by Lemma \ref{4kyklos2}, that $S$ belongs to
$\overline{\cap _{k} (\cl M_k\cap \ran \psi _k)\bar \otimes \cl U + \cl W}$,
which is a subset of $\Omega$.
Since
$$X = \tilde{\phi}_k(\tilde{\psi}_k(X)) + \tilde{\phi}_k^{\perp}(\tilde{\psi}_k(X))
+ \tilde{\theta}_k(X) + (\tilde{\psi}_k + \tilde{\theta}_k)^{\perp}(X), \ \ \ k\in \bb{N},$$
it now follows that
$X\in \Omega$. 
\end{proof}

\begin{lemma}\label{ldelta2}
Proposition \ref{delta} holds if the space $\cl W$ has $r+1$ terms.
\end{lemma}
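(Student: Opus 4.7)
The plan is to mimic the proof of Lemma \ref{ldelta} essentially verbatim, but now using the tools that have just been upgraded to handle a $\cl W$ with $r+1$ summands. Concretely, the base case will appeal to Lemma \ref{lgama2} (Proposition \ref{gama} with $r+1$ terms), while the cluster-point argument in the inductive step will invoke Lemma \ref{4kyklos2} (Proposition \ref{mazi} (v) with $r+1$ terms) in place of Lemma \ref{4kyklos}. The rest of the work is carried by Lemma \ref{l_o2} and the inductive assumption on the width of $\cl C$.

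More precisely, I would induct on the width $n$ of $\cl C$. For $n=1$ the space $\cl C$ is itself a nest algebra bimodule and the claim coincides with Lemma \ref{lgama2}. For the inductive step, write $\cl C = \cap_{l=1}^{n}\cl C_l$ with each $\cl C_l$ a nest algebra bimodule, choose a decomposition scheme $(\psi_k,\theta_k,\ran\psi_k,\cl Z_k)_{k\in\bb{N}}$ for $\cl C_n$, and pick a descending sequence $(\cl M_k)_{k\in\bb{N}}$ of ranges of contractive Schur idempotents with $\cl M=\cap_k\cl M_k$ (possible because $\cl M$ is ternary). Fix $X$ in the left-hand side. Split $X=\tilde\psi_k(X)+\tilde\psi_k^\perp(X)$.

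For $\tilde\psi_k^\perp(X)$: by Lemma \ref{l_pre} it simultaneously lies in $\overline{\cl M\bar\otimes\cl U+\cl W}$, $\overline{(\cap_{l=1}^{n-1}\cl C_l)\bar\otimes\cl U+\cl W}$ and $\overline{\cl Z_k\bar\otimes\cl U+\cl W}$. The inductive hypothesis on width applied to the first two inclusions gives $\tilde\psi_k^\perp(X)\in\overline{(\cl M\cap(\cap_{l=1}^{n-1}\cl C_l))\bar\otimes\cl U+\cl W}$, and then Lemma \ref{l_o2} combined with the third inclusion yields $\tilde\psi_k^\perp(X)\in\overline{(\cl M\cap\cl C)\bar\otimes\cl U+\cl W}$, as desired. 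For $\tilde\psi_k(X)$: Lemmas \ref{l_pre} and \ref{l_o2} place it in $\overline{(\cl M_k\cap\ran\psi_k)\bar\otimes\cl U+\cl W}\cap\overline{(\cap_{l=1}^{n-1}\cl C_l)\bar\otimes\cl U+\cl W}$ for every $k$. Pick a weak* cluster point $S$ of $(\tilde\psi_k(X))_{k\in\bb{N}}$; Lemma \ref{4kyklos2} then forces
\[
S\in\overline{(\cl M\cap\cl N)\bar\otimes\cl U+\cl W}\cap\overline{(\cap_{l=1}^{n-1}\cl C_l)\bar\otimes\cl U+\cl W},
\]
where $\cl N=\cap_k\ran\psi_k\subseteq\cl C_n$. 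A second application of the inductive hypothesis on width (with $\cl M\cap\cl N$ playing the role of the ternary bimodule and $\cap_{l=1}^{n-1}\cl C_l$ the role of $\cl C$) gives $S\in\overline{(\cl M\cap\cl N\cap(\cap_{l=1}^{n-1}\cl C_l))\bar\otimes\cl U+\cl W}$, which is contained in $\overline{(\cl M\cap\cl C)\bar\otimes\cl U+\cl W}$ since $\cl N\subseteq\cl C_n$. Adding the contributions and using that $T-S$ is a cluster point of $(\tilde\psi_k^\perp(X))_{k\in\bb{N}}$ (already shown to lie in $\overline{(\cl M\cap\cl C)\bar\otimes\cl U+\cl W}$) completes the proof.

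I do not expect a genuine obstacle here: every ingredient has already been assembled in the previous lemmas of this induction layer, and the argument is a strict parallel of the one in Lemma \ref{ldelta}. The only point requiring some care is making sure that, at the cluster-point step, one applies Lemma \ref{4kyklos2} (the $r+1$-term version) rather than the $r$-term version, and that the width-induction is invoked with $\cl M\cap\cl N$, which is again a ternary masa-bimodule because it is an intersection of a descending sequence of ranges of contractive Schur idempotents.
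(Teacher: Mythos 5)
Your proof is correct and follows essentially the same route as the paper's: induction on the width of $\cl C$ with base case Lemma \ref{lgama2}, the split $X=\tilde\psi_k(X)+\tilde\psi_k^\perp(X)$ handled via Lemma \ref{l_o2} and the width-induction hypothesis, and the cluster-point step via Lemma \ref{4kyklos2}. Your closing remark that $\cl M\cap\cl N$ is again a ternary masa-bimodule is exactly the (implicit) justification the paper relies on when re-invoking the inductive hypothesis.
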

\begin{proof}
We use induction on the width $n$ of $\cl C.$ If $n=1$ the statement reduces to Lemma \ref{lgama2}.
Suppose that the statement holds for all masa-bimodules $\cl C$ of width not exceeding $n-1$
and let $\cl C = \cap _{l=1}^n\cl C_l,$ where $\cl C_l$ is a nest algebra bimodule,
$l = 1,\dots,n$. Fix
$$X\in \overline{\cl M\bar \otimes \cl U + \cl W} \cap \overline{ \cl C\bar \otimes \cl U + \cl W}$$
and let $(\psi_k,\theta_k,\ran\psi_k,\cl Z_k)_{k\in \bb{N}}$ be a decomposition scheme for $\cl C_n$.
We also assume that $\cl M = \cap _k\cl M_k$ where every $(\cl M_k)_{k\in \bb{N}}$ is
descending sequence of ranges of contractive Schur idempotents. 
Using Lemma \ref{l_o2} and the inductive assumption, we obtain
\begin{eqnarray*}
\tilde{\psi}_k^\bot (X) & \in & \overline{\cl M\bar \otimes \cl U + \cl W}\cap
\overline{(\cap_{l=1}^{n-1} \cl C_l)\bar \otimes \cl U + \cl W}
\cap \overline{\cl Z_k\bar \otimes \cl U+\cl B \bar \otimes \cl V}\\
& = & \overline{\cl M\cap (\cap _{l=1}^{n-1} \cl C_l)\bar \otimes \cl U + \cl W} \cap
\overline{\cl Z_k\bar \otimes \cl U + \cl W}\\
& = & \overline{(\cl M\cap (\cap _{l=1}^{n-1} \cl C_l) \cap \cl Z_k) \bar \otimes \cl U + \cl W} 
\subseteq \overline{(\cl M\cap \cl C) \bar \otimes \cl U + \cl W}.
\end{eqnarray*}
Using Lemma \ref{l_o2} again, we have
\begin{eqnarray*}
\tilde{\psi}_k(X) & \in & \overline{\cl M_k\bar \otimes \cl U + \cl W}\cap
\overline{(\cap _{l=1}^{n-1} \cl C_l)\bar \otimes \cl U + \cl W}
\cap \overline{\ran \psi _k\bar \otimes \cl U + \cl W}\\
& = & \overline{(\cl M_k\cap \ran \psi _k)\bar \otimes \cl U + \cl W}
\cap \overline{(\cap_{l=1}^{n-1} \cl C_l)\bar \otimes \cl U + \cl W},
\end{eqnarray*}
for every $k\in \bb{N}$.
By Lemma \ref{4kyklos2}, if $S$ is a weak* cluster point of $(\tilde{\psi}_k(X))_{k\in \bb{N}}$,
then
$$S\in \overline{(\cl M\cap \cl N)\bar \otimes \cl U + \cl W}
\cap \overline{(\cap _{l=1}^{n-1} \cl C_l)\bar \otimes \cl U + \cl W}.$$
By the inductive assumption,
$$S\in \overline{(\cl M\cap \cl N \cap (\cap _{l=1}^{n-1} \cl C_l))\bar \otimes \cl U + \cl W}
\subseteq \overline{(\cl M\cap \cl C)\bar \otimes \cl U + \cl W}.$$
The proof is complete. 
\end{proof}

\begin{lemma}\label{lepsilon2}
Theorem \ref{epsilon} holds if the space $\cl W$ has $r+1$ terms.
\end{lemma}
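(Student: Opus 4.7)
The plan is to mimic the proof of Lemma \ref{lepsilon} essentially verbatim, with the general $\cl W = \sum_{i=1}^{r+1} \cl B_i \bar\otimes \cl U_i$ playing the role that $\cl B \bar\otimes \cl V$ plays in the $r = 1$ argument. The key point is that the two auxiliary ingredients used there, namely Proposition \ref{mazi}(v) and Proposition \ref{delta}, have already been upgraded to the $r+1$-term setting in Lemmas \ref{4kyklos2} and \ref{ldelta2} respectively, so everything slots in.

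As a first reduction, since every masa-bimodule of finite width is a finite intersection of nest algebra bimodules, I may assume without loss of generality that each $\cl C_j$ is itself a nest algebra bimodule. I then induct on $m$; the case $m = 1$ is trivial, so suppose the identity holds for any $m-1$ nest algebra bimodules and fix $X \in \cap_{j=1}^m \overline{\cl C_j \bar\otimes \cl U + \cl W}$. Write $\cl C = \cap_{j=1}^m \cl C_j$ and let $(\phi_k, \theta_k, \cl M_k, \cl Z_k)_{k\in\bb{N}}$ be a decomposition scheme for $\cl C_m$, with $\cl M = \cap_k \cl M_k$. Split $X = \tilde{\phi}_k(X) + \tilde{\phi}_k^{\perp}(X)$.

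For $\tilde{\phi}_k^{\perp}(X)$, Lemma \ref{l_pre} together with the inclusion $X \in \overline{\cl C_m \bar\otimes \cl U + \cl W}$ yields $\tilde{\phi}_k^{\perp}(X) \in \overline{\cl Z_k \bar\otimes \cl U + \cl W}$. Invariance of each $\cl C_j$ under $\phi_k$ combined with the inductive hypothesis on $m$ gives also $\tilde{\phi}_k^{\perp}(X) \in \overline{(\cap_{j=1}^{m-1} \cl C_j) \bar\otimes \cl U + \cl W}$. Intersecting these two memberships via Lemma \ref{l_o2} places $\tilde{\phi}_k^{\perp}(X)$ in $\overline{((\cap_{j=1}^{m-1} \cl C_j) \cap \cl Z_k) \bar\otimes \cl U + \cl W} \subseteq \overline{\cl C \bar\otimes \cl U + \cl W}$, as required. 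For $\tilde{\phi}_k(X)$, Lemma \ref{l_pre} gives $\tilde{\phi}_k(X) \in \overline{\cl M_k \bar\otimes \cl U + \cl W} \cap \overline{(\cap_{j=1}^{m-1} \cl C_j) \bar\otimes \cl U + \cl W}$ for every $k$. Picking a weak* cluster point $S$ of the sequence $(\tilde{\phi}_k(X))_{k\in\bb{N}}$, Lemma \ref{4kyklos2} (the $(r+1)$-term version of Proposition \ref{mazi}(v)) upgrades the first membership in the limit to $S \in \overline{\cl M \bar\otimes \cl U + \cl W}$, while the second one is preserved under weak* limits; then Lemma \ref{ldelta2} (the $(r+1)$-term version of Proposition \ref{delta}) intersects them to give $S \in \overline{(\cl M \cap (\cap_{j=1}^{m-1} \cl C_j)) \bar\otimes \cl U + \cl W} \subseteq \overline{\cl C \bar\otimes \cl U + \cl W}$, since $\cl M \subseteq \cl C_m$. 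Combining with the previous paragraph, $X = (X - S) + S \in \overline{\cl C \bar\otimes \cl U + \cl W}$. The reverse inclusion is trivial.

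The argument is essentially mechanical; there is no genuine new obstacle beyond correctly threading the two pieces just proved, namely Lemmas \ref{4kyklos2} and \ref{ldelta2}, into the skeleton of Lemma \ref{lepsilon}. The only subtlety worth flagging is that at the cluster-point step one must verify that the second weak* closed set $\overline{(\cap_{j=1}^{m-1} \cl C_j) \bar\otimes \cl U + \cl W}$ indeed passes to the limit, which is automatic as it is weak* closed.
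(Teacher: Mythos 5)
Your proof is correct and follows essentially the same route as the paper's: reduce to nest algebra bimodules, induct on $m$, split off a decomposition scheme for $\cl C_m$, handle $\tilde{\phi}_k^{\perp}(X)$ via Lemma \ref{l_pre}, the inductive hypothesis and Lemma \ref{l_o2}, and handle the cluster point of $(\tilde{\phi}_k(X))_k$ via Lemmas \ref{4kyklos2} and \ref{ldelta2}. No gaps.
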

\begin{proof}
It suffices to prove that if $\cl C_1,\dots,\cl C_m$ are weak* closed nest algebra bimodules 
and $\cl C = \cap_{i=1}^m \cl C_i$, then 
$$\cap _{i=1}^m\overline{\cl C_i\bar \otimes \cl U+\cl W} = \overline{\cl C\bar \otimes \cl U+\cl W},$$ 
where $\cl  C=\cap _{i=1}^m \cl C_i$.
We use induction on $m.$ Suppose that
$$\cap _{i=1}^{m-1}\overline{\cl C_i\bar \otimes \cl U+\cl W} =
\overline{(\cap _{i=1}^{m-1}\cl C_i)\bar \otimes \cl U+\cl W}$$
and fix $X\in \cap _{i=1}^m\overline{\cl C_i\bar \otimes \cl U+\cl W}.$
Let $(\psi_k,\theta_k,\cl M_k,\cl Z_k)_{k\in \bb{N}}$ be a decomposition scheme for $\cl C_m$.
Using the inductive assumption and Lemmas \ref{l_pre} and \ref{l_o2}, we have
$$\tilde{\psi}_k^\bot (X) \in \overline{((\cap _{i=1}^{m-1}\cl C_i)\cap \cl Z_k)\bar \otimes \cl U+\cl W}\subseteq
\overline{\cl C\bar \otimes \cl U+\cl W}.$$
On the other hand,
$$\tilde{\psi}_k(X)\in \overline{\cl M _k \bar \otimes \cl U+\cl W}\cap
\overline{(\cap _{i=1}^{m-1}\cl C_i)\bar \otimes \cl U+\cl W}.$$
Thus, if $S$ is a weak* cluster point of the sequence $(\psi _k(X))_{k\in \bb{N}}$ then, by Lemma \ref{4kyklos2},
we have that
$$S\in \overline{\cl M \bar \otimes \cl U+\cl W}\cap
\overline{(\cap _{i=1}^{m-1}\cl C_i)\bar \otimes \cl U+\cl W}.$$
By Lemma \ref{ldelta2},
$$S\in \overline{(\cl M \cap (\cap _{i=1}^{m-1}\cl C_i))\bar \otimes \cl U+\cl W}\subseteq
\overline{\cl C\bar \otimes \cl U+\cl W}.$$
The proof is complete.
\end{proof}

\bigskip

Lemmas \ref{diokyklos}--\ref{lepsilon2} conclude the proof of Propositions \ref{mazi}--\ref{delta} and Theorem \ref{epsilon}.
The following statement, which is 
an equivalent formulation of Theorem \ref{epsilon}, follows from that theorem
by a straightforward induction on $r$.

\begin{corollary}\label{epsilona}
Let $r,l_1,\dots,l_r\in \bb{N}$, 
$\{\cl B_j^i\}_{j=1}^{l_i}$ be a family of masa bimodules of finite width and
$\cl U_i$ be a weak* closed subspace of $\cl B(K_1,K_2)$, $i = 1,\dots,r$.
Set $\cl B^i = \cap_{j=1}^{l_i} \cl B_j^i$, $i = 1,\dots,r$.
Then
$$\bigcap_{j_1,\dots,j_r} \overline{\cl B_{j_1}^1 \bar \otimes \cl U_1 + \cdots + \cl B_{j_r}^r\bar \otimes \cl U_r}
 = \overline{\cl B^1\bar \otimes \cl U_1 + \cdots + \cl B^r \bar \otimes \cl U_r}.$$
\end{corollary}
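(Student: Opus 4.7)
My plan is to deduce the corollary from Theorem \ref{epsilon} by induction on $r$, eliminating the indices $j_1,\dots,j_r$ one at a time. The key preliminary observation is that each $\cl B^i = \cap_{j=1}^{l_i}\cl B_j^i$ is itself a masa-bimodule of finite width: since each $\cl B_j^i$ is a finite intersection of nest algebra bimodules and $l_i$ is finite, so is $\cl B^i$. This ensures that partial intersections remain admissible as inputs to Theorem \ref{epsilon} at every stage of the induction.

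For the base case $r=1$, the claim reduces to
$$\bigcap_{j_1=1}^{l_1}\overline{\cl B_{j_1}^1\bar\otimes\cl U_1} = \overline{\cl B^1\bar\otimes\cl U_1},$$
which is Theorem \ref{epsilon} with $\cl W = \{0\}$, $\cl C_{j_1} = \cl B_{j_1}^1$, and $\cl U = \cl U_1$.

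For the inductive step I would rewrite the $r$-fold intersection as the nested intersection $\bigcap_{j_1}\cdots\bigcap_{j_r}$. Fixing $j_1,\dots,j_{r-1}$ and applying Theorem \ref{epsilon} to the innermost intersection over $j_r$, with $\cl W = \sum_{i=1}^{r-1}\cl B_{j_i}^i\bar\otimes\cl U_i$, $\cl U = \cl U_r$, and $\cl C_{j_r}=\cl B_{j_r}^r$, yields
$$\bigcap_{j_r}\overline{\textstyle\sum_{i=1}^{r-1}\cl B_{j_i}^i\bar\otimes\cl U_i + \cl B_{j_r}^r\bar\otimes\cl U_r} = \overline{\textstyle\sum_{i=1}^{r-1}\cl B_{j_i}^i\bar\otimes\cl U_i + \cl B^r\bar\otimes\cl U_r}.$$
Proceeding outward, I next intersect over $j_{r-1}$, now treating $\cl B^r\bar\otimes\cl U_r$ as part of the fixed $\cl W$, and apply Theorem \ref{epsilon} again to replace $\cl B_{j_{r-1}}^{r-1}$ by $\cl B^{r-1}$. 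Continuing this $r$ times successively gives the desired identity.

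I do not expect genuine obstacles: the substantive content lies in Theorem \ref{epsilon}, and this deduction is a matter of bookkeeping. The only point to watch is that the space $\cl W$ appearing at each intermediate step retains the form required by Theorem \ref{epsilon}, i.e.\ a sum of tensor products $\cl B \bar\otimes \cl U$ with $\cl B$ a masa-bimodule of finite width; this is guaranteed by the observation in the first paragraph that each $\cl B^k$ has finite width. The reverse inclusion $\overline{\sum_i \cl B^i\bar\otimes\cl U_i} \subseteq \bigcap_{j_1,\dots,j_r}\overline{\sum_i \cl B_{j_i}^i\bar\otimes\cl U_i}$ is immediate from monotonicity since $\cl B^i \subseteq \cl B_{j_i}^i$ for every choice of $j_i$.
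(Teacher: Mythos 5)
Your proof is correct and follows essentially the same route as the paper, which states that Corollary \ref{epsilona} follows from Theorem \ref{epsilon} by a straightforward induction on $r$; your explicit bookkeeping, in particular the observation that each $\cl B^i$ remains a masa-bimodule of finite width so that the intermediate spaces $\cl W$ stay admissible, fills in exactly what the paper leaves to the reader.
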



\section{Operator synthesis of unions of products}\label{s_appos}

In this section we apply the results from Sections \ref{s_stab} and \ref{s_insum}
to study questions about operator synthesis. 
We start by recalling the main definitions regarding the notion of operator synthesis.

Let $(X_1,\mu_1)$ and $(X_2,\mu_2)$ be standard measure spaces, that is, the measures 
$\mu_1$ and $\mu_2$ are
regular Borel measures with respect to some Borel structures on $X_1$ and $X_2$ arising from
complete metrizable topologies.
Let $H_1 = L^2(X_1,\mu_1)$ and $H_2 = L^2(X_2,\mu_2)$. 
For a function $\nph\in L^{\infty}(X_1,\mu_1)$, let 
$M_{\nph}$ be the (bounded) operator on $H_1$ given by 
$M_\nph f = \nph f$, $f\in L^2(X_1,\mu_1)$; similarly define
$M_\psi$ for $\psi\in L^{\infty}(X_2,\mu_2)$. 
Let 
$$\cl D_1 = \{M_{\nph} : \nph\in L^{\infty}(X_1,\mu_1)\}.$$
We have that $\cl D_1$ is a masa; we define $\cl D_2\subseteq \cl B(H_2)$ similarly.
We need several facts and notions from the theory of masa-bimodules
\cite{a}, \cite{eks}, \cite{st}. A subset $E\subseteq X_1\times X_2$ is
called {\it marginally null} if $E\subseteq (M_1\times
X_2)\cup(X_1\times M_2)$, where $\mu_1(M_1) = \mu_2(M_2) = 0$. We call two
subsets $E,F\subseteq X_1\times X_2$ {\it marginally equivalent} (and
write $E\cong F$) if the symmetric difference of $E$ and $F$ is
marginally null. A set $\kappa\subseteq X_1\times X_2$ is called {\it
$\omega$-open} if it is marginally equivalent to a (countable) union
of the form $\cup_{i=1}^{\infty} \alpha_i\times\beta_i$, where
$\alpha_i\subseteq X_1$ and $\beta_i\subseteq X_2$ are measurable, $i\in
\bb{N}$. The complements of $\omega$-open sets are called {\it
$\omega$-closed}. An operator $T\in \cl B(H_1,H_2)$ is said to be
supported on $\kappa$ if $M_{\chi_{\beta}}TM_{\chi_{\alpha}} = 0$
whenever $(\alpha\times\beta)\cap \kappa \cong \emptyset$. (Here
$\chi_{\gamma}$ stands for the characteristic function of a measurable subset $\gamma$.) 
Given an $\omega$-closed set $\kappa\subseteq X_1\times X_2$, let 
$$\frak{M}_{\max}(\kappa) = \{T\in \cl B(H_1,H_2) : T \mbox{ is supported on } \kappa\}.$$
The space $\frak{M}_{\max}(\kappa)$ is a \emph{reflexive} masa-bimodule in the sense that 
$\Ref{\frak{M}_{\max}(\kappa)} = \frak{M}_{\max}(\kappa)$ where,
for a subspace $\cl U\subseteq \cl B(H_1,H_2)$, we let its \emph{reflexive hull} \cite{ls} be the subspace
$$\Ref{\cl U} = \{T\in \cl B(H_1,H_2) : Tx\in \overline{\cl U x}, \ \mbox{ for all } x\in H_1\}.$$

We note two straightforward 
properties of the reflexive hull that will be used in the sequel: 
it is monotone ($\cl U_1\subseteq \cl U_2$
implies $\Ref{\cl U_1}\subseteq \Ref{\cl U_2}$) and idempotent ($\Ref{\Ref{\cl U}} = \Ref{\cl U}$). 

It was shown in \cite{eks} that every reflexive masa-bimodule is of the form 
$\frak{M}_{\max}(\kappa)$ for some,
unique up to marginal equivalence, $\omega$-closed set $\kappa\subseteq X\times Y$. 
If $\cl U$ is any masa-bimodule, then its \emph{support} $\supp \cl U$ is defined to be the 
$\omega$-closed set $\kappa\subseteq X\times Y$ such that 
$\Ref{\cl U} = \frak{M}_{\max}(\kappa)$.
The masa-bimodule $\frak{M}_{\max}(\kappa)$ is the largest,
with respect to inclusion, (weak* closed) masa-bimodule with support $\kappa$
(see \cite{eks}).
As an extension of Arveson's work on commutative subspace lattices \cite{a}, 
it was shown in \cite{st} that if $\kappa$ is an $\omega$-closed set, 
then there exists a smallest, with respect to inclusion, 
(weak* closed) masa-bimodule $\frak{M}_{\min}(\kappa)$
with support $\kappa$. 
The $\omega$-closed subset $\kappa\subseteq X\times Y$ is called \emph{operator synthetic}
if $\frak{M}_{\min}(\kappa) = \frak{M}_{\max}(\kappa)$. 
The roots of the notion of operator synthesis lie in Harmonic Analysis --
it is an operator theoretic version of the well-known 
concept of spectral synthesis. 
We refer the reader to \cite{a} for a relevant discussion, 
and to \cite{st} for the formal relation between the two concepts,
which will be briefly summarised at the end of the section.

The supports of masa-bimodules of finite width will be called \emph{sets of finite width}. 
A set $\kappa\subseteq X_1\times X_2$ is of finite width 
precisely when it is the set of solutions of a system of (finitely many)
measurable function inequalities, that is, precisely when 
it has the form
$$\kappa = \{(x,y)\in X_1\times X_2 : f_k(x)\leq g_k(y), k = 1,\dots,n\},$$
where $f_k : X_1\to \bb{R}$ and $g_k : X_2\to \bb{R}$ are 
measurable functions, $k = 1,\dots,n$ (see, {\it e.g.}, \cite{jlms}).  
It was shown in \cite{st} and \cite{jlms} that sets of finite width are operator synthetic.

In this section, we will be concerned with the question of when 
operator synthesis is preserved under unions of products. 
Suppose that $(Y_1,\nu_1)$, $(Y_2,\nu_2)$ is another pair of standard measure spaces, 
$K_i = L^2(Y_i,\nu_i)$ and $\cl C_i$ is the multiplication masa of $L^{\infty}(Y_i,\nu_i)$, 
$i = 1,2$.
Let 
$\cl U\subseteq \cl B(H_1,H_2)$ be a 
$\cl D_2,\cl D_1$-module and 
$\cl V\subseteq \cl B(K_1,K_2)$ be a $\cl C_2,\cl C_1$-module. 
Then the subspace $\cl U\bar\otimes \cl V$ is a 
$\cl D_2\bar\otimes \cl C_2,\cl D_1\bar\otimes\cl C_1$-module, and hence its 
support is a subset of $(X_1\times Y_1)\times (X_2\times Y_2)$. 
The \lq\lq flip''
$$\rho : (X_1\times X_2)\times (Y_1\times Y_2) \to (X_1\times Y_1)\times (X_2\times Y_2),$$ 
given by 
$$\rho(x_1,x_2,y_1,y_1) = (x_1,y_1,x_2,y_2), \ \ \ x_i\in X_i, y_i\in Y_i, i = 1,2,$$
is thus needed in order to relate 
$\supp(\cl U\bar\otimes\cl V)$ to $(\supp\cl U)\times (\supp \cl V)$. 
Indeed, it was shown in \cite{mgot} that 
\begin{equation}\label{equv}
\supp(\cl U\bar\otimes\cl V) = \rho(\supp\cl U\times \supp\cl V).
\end{equation}
It was observed in \cite[Lemma 4.19]{stt} that 
\begin{equation}\label{eqmin}
\frak{M}_{\min}(\rho(\kappa\times\lambda)) = \frak{M}_{\min}(\kappa)\bar\otimes\frak{M}_{\min}(\lambda),
\end{equation}
whenever $\kappa\subseteq X_1\times X_2$ and $\lambda\subseteq Y_1\times Y_2$
are $\omega$-closed sets.

\begin{remark} \label{r_indt}
{\rm If $\kappa\subseteq X_1\times X_2$ and $\lambda\subseteq Y_1\times Y_2$ are 
non-marginally null $\omega$-closed sets such that $\rho(\kappa\times\lambda)$ is operator synthetic,
then both $\kappa$ and $\lambda$ are operator synthetic. Indeed, suppose that 
$T\in \frak{M}_{\max}(\kappa)$, and let $0\neq S\in \frak{M}_{\min}(\lambda)$. Then 
$T\otimes S\in \frak{M}_{\max}(\rho(\kappa\times\lambda))$ and, by assumption
and identity (\ref{eqmin}), 
$$T\otimes S\in \frak{M}_{\min}(\kappa)\bar\otimes\frak{M}_{\min}(\lambda).$$
It now easily follows that $T\in \frak{M}_{\min}(\kappa)$. Thus, $\kappa$ is operator synthetic and by symmetry $\lambda$ is so as well.}
\end{remark}

\begin{remark}\label{r_ssynp}
{\rm
Let $G$ and $H$ be locally compact groups. 
A problem in Harmonic Analysis asks when, given closed sets 
$E\subseteq G$ and $F\subseteq H$ satisfying spectral synthesis, 
the set $E\times F$ satisfies spectral synthesis as a subset of the direct product 
$G\times H$. 
We refer the reader to \cite{eym} 
for the definition of the notion of spectral 
synthesis and other basic concepts and 
results from non-commutative Harmonic Analysis. 
Analogues of 
identities (\ref{equv}) and (\ref{eqmin}) in the setting of 
Harmonic Analysis can be formulated as follows. 
Let $\vn(G)\subseteq \cl B(L^2(G))$ 
(resp. $\vn(H)\subseteq \cl B(L^2(H))$) be the von Neumann 
algebra of $G$ (resp. $H$), and note that $\vn(G)\bar\otimes \vn(H)$ can be 
naturally identified with $\vn(G\times H)$. 
The Harmonic Analysis analogue of masa-bimodules are \emph{invariant spaces};
these are subspaces $\cl X\subseteq \vn(G)$ that are annihilators of 
ideals of the Fourier algebra $A(G)$ of $G$. 
Given an invariant space $\cl X\subseteq \vn(G)$, one may define 
its support $\supp\cl X$ as
the null set of its preannihilator in $A(G)$. 
It is not difficult to see 
that if $\cl X\subseteq \vn(G)$ and $\cl Y\subseteq \vn(H)$ are 
invariant spaces, then 
$\supp (\cl X\bar\otimes\cl Y) = (\supp\cl X) \times (\supp\cl Y)$
and that, given any closed subset $E\subseteq G$,
there exists a largest (resp. smallest) invariant space $\cl X_{\max}(E)$ 
(resp. $\cl X_{\min}(E)$) with support $E$, and 
$\cl X_{\min}(E)\bar\otimes \cl X_{\min}(F) = \cl X_{\min}(E\times F)$.
}
\end{remark}

The next proposition describes the connection between operator synthesis and 
tensor product formulas.

\begin{proposition}\label{1234}
Let $\cl U\subseteq\cl B(H_1,H_2)$ and $\cl V\subseteq \cl B(K_1,K_2)$ be 
masa-bimodules with supports 
$\kappa\subseteq X_1\times X_2$ and $\lambda\subseteq Y_1\times Y_2$,
respectively. Then 
\begin{equation}\label{eqf}
\supp \cl F(\cl U,\cl V) = \rho(\kappa\times\lambda)
\end{equation}
and 
\begin{equation}\label{eqm}
\cl F(\frak{M}_{\max}(\kappa),\frak{M}_{\max}(\lambda)) = 
\frak{M}_{\max}(\rho(\kappa\times\lambda)).
\end{equation}

Moreover, if $\kappa$ and $\lambda$ are operator synthetic, 
then the following statements are equivalent:

(i) \ \ $\rho(\kappa\times\lambda)$ is operator synthetic;

(ii) \ $\cl F(\frak{M}_{\max}(\kappa),\frak{M}_{\max}(\lambda)) = \frak{M}_{\max}(\kappa)\bar\otimes \frak{M}_{\max}(\lambda)$;

(iii) $\cl F(\frak{M}_{\min}(\kappa),\frak{M}_{\min}(\lambda)) = \frak{M}_{\min}(\kappa)\bar\otimes \frak{M}_{\min}(\lambda)$.
\end{proposition}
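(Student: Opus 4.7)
My plan is to derive \eqref{eqf} and \eqref{eqm} from \eqref{equv}, the standard compression-commutation identity for Tomiyama's slice maps, and elementary reflexivity properties, and then to read off the three equivalences essentially for free from \eqref{eqm}, \eqref{eqmin}, and the synthesis hypothesis.

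The backbone of the argument is the sandwich
\[
\cl U\bar\otimes\cl V \;\subseteq\; \cl F(\cl U,\cl V) \;\subseteq\; \bigl(\cl U\bar\otimes\cl B(K_1,K_2)\bigr) \cap \bigl(\cl B(H_1,H_2)\bar\otimes\cl V\bigr).
\]
By \eqref{equv}, the leftmost space has reflexive hull $\frak{M}_{\max}(\rho(\kappa\times\lambda))$. The rightmost space is already contained in $\frak{M}_{\max}(\rho(\kappa\times\lambda))$, since $\cl U\bar\otimes\cl B(K_1,K_2)$ has support $\rho(\kappa\times(Y_1\times Y_2))$, $\cl B(H_1,H_2)\bar\otimes\cl V$ has support $\rho((X_1\times X_2)\times\lambda)$, and the intersection of these two sets is $\rho(\kappa\times\lambda)$. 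Monotonicity and idempotence of $\Ref{\cdot}$ then pinch $\Ref{\cl F(\cl U,\cl V)}=\frak{M}_{\max}(\rho(\kappa\times\lambda))$, which is \eqref{eqf}. Choosing $\cl U=\frak{M}_{\max}(\kappa)$ and $\cl V=\frak{M}_{\max}(\lambda)$, the right-hand inclusion in the sandwich immediately yields $\cl F(\frak{M}_{\max}(\kappa),\frak{M}_{\max}(\lambda))\subseteq\frak{M}_{\max}(\rho(\kappa\times\lambda))$. For the reverse, I would fix $T$ in the larger space and, for measurable $\alpha\subseteq X_1$, $\beta\subseteq X_2$ with $(\alpha\times\beta)\cap\kappa\cong\emptyset$, note that $\rho(((\alpha\times\beta)\cap\kappa)\times\lambda)$ is marginally null in $(X_1\times Y_1)\times(X_2\times Y_2)$, so $(M_{\chi_\beta}\otimes I)T(M_{\chi_\alpha}\otimes I)=0$. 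The identity $M_{\chi_\beta}L_\tau(T)M_{\chi_\alpha}=L_\tau((M_{\chi_\beta}\otimes I)T(M_{\chi_\alpha}\otimes I))$ then forces $L_\tau(T)$ to be supported on $\kappa$, i.e.\ $L_\tau(T)\in\frak{M}_{\max}(\kappa)$; the symmetric argument gives $R_\omega(T)\in\frak{M}_{\max}(\lambda)$, and hence $T\in\cl F(\frak{M}_{\max}(\kappa),\frak{M}_{\max}(\lambda))$, proving \eqref{eqm}.

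Under the synthesis assumption, statements (ii) and (iii) are literally the same, since $\frak{M}_{\min}=\frak{M}_{\max}$ on both $\kappa$ and $\lambda$. The equivalence (i)$\Leftrightarrow$(ii) is then bookkeeping: \eqref{eqmin} identifies $\frak{M}_{\min}(\rho(\kappa\times\lambda))$ with $\frak{M}_{\min}(\kappa)\bar\otimes\frak{M}_{\min}(\lambda)=\frak{M}_{\max}(\kappa)\bar\otimes\frak{M}_{\max}(\lambda)$, while \eqref{eqm} identifies $\frak{M}_{\max}(\rho(\kappa\times\lambda))$ with $\cl F(\frak{M}_{\max}(\kappa),\frak{M}_{\max}(\lambda))$, so the equality of these two spaces (which is (i)) is precisely (ii). The main obstacle I anticipate is the verification of the compression-commutation identity for $L_\tau$ and the marginal-null computation underpinning \eqref{eqm}; everything else is a formal consequence of results already in the paper.
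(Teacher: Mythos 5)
Your argument is correct and follows essentially the same route as the paper: the intersection formula $\cl F(\cl U,\cl V)=(\cl U\bar\otimes\cl B(K_1,K_2))\cap(\cl B(H_1,H_2)\bar\otimes\cl V)$ together with (\ref{equv}) squeezes the support for (\ref{eqf}), and the equivalences then fall out of (\ref{eqmin}) and (\ref{eqm}) exactly as in the text. The only divergence is at (\ref{eqm}), where the paper simply observes that both sides are reflexive with equal supports, while you verify the reverse inclusion by hand via the compression identity $M_{\chi_\beta}L_\tau(T)M_{\chi_\alpha}=L_\tau((M_{\chi_\beta}\otimes I)T(M_{\chi_\alpha}\otimes I))$ and the marginal-null computation for rectangles disjoint from $\kappa$ --- a correct, slightly more explicit substitute for the same fact.
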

\begin{proof}
We have that 
$\cl F(\cl U,\cl V) = (\cl U\bar\otimes\cl B(K_1,K_2))\cap (\cl B(H_1,H_2)\bar\otimes\cl V)$, and hence 
$$\supp \cl F(\cl U,\cl V) = \supp(\cl U\bar\otimes\cl B(K_1,K_2))\cap \supp (\cl B(H_1,H_2)\bar\otimes\cl V).$$
By (\ref{equv}),
the support of 
$\cl U\bar\otimes\cl B(K_1,K_2)$ 
(resp. $\cl B(H_1,H_2)\bar\otimes\cl V$) is 
$\rho(\kappa\times (Y_1\times Y_2))$
(resp. $\rho((X_1\times X_2)\times \lambda)$).
Identity (\ref{eqf}) now readily follows.
To establish (\ref{eqm}) note that
$\cl F(\frak{M}_{\max}(\kappa),\frak{M}_{\max}(\lambda))$ and 
$\frak{M}_{\max}(\rho(\kappa\times\lambda))$ are both reflexive and, by (\ref{eqf}), 
have equal supports.

Suppose that $\kappa$ and $\lambda$ are operator synthetic. 

(ii)$\Leftrightarrow$(i) Using \cite[Lemma 4.19]{stt} for the first equality below
and identity (\ref{eqm}) for the last one, we have 
\begin{eqnarray*}
\frak{M}_{\min}(\rho(\kappa\times\lambda)) & = & \frak{M}_{\min}(\kappa)\bar\otimes\frak{M}_{\min}(\lambda)
=  \frak{M}_{\max}(\kappa)\bar\otimes\frak{M}_{\max}(\lambda)\\
& \subseteq &
\cl F(\frak{M}_{\max}(\kappa), \frak{M}_{\max}(\lambda))
=  \frak{M}_{\max}(\rho(\kappa\times\lambda)).
\end{eqnarray*}

If the inclusion in the above chain is equality then we have that 
$\frak{M}_{\min}(\rho(\kappa\times\lambda)) = \frak{M}_{\max}(\rho(\kappa\times\lambda))$, in other words, that 
$\rho(\kappa\times\lambda)$ is operator synthetic. Conversely, if 
$\rho(\kappa\times\lambda)$ is operator synthetic then we must have equalities throughout. 

(iii)$\Leftrightarrow$(i) follows similarly from the chain
\begin{eqnarray*}
\frak{M}_{\min}(\rho(\kappa\times\lambda)) & = & 
 \frak{M}_{\min}(\kappa)\bar\otimes\frak{M}_{\min}(\lambda) \subseteq  \cl F(\frak{M}_{\min}(\kappa),\frak{M}_{\min}(\lambda))\\
& = & (\frak{M}_{\min}(\kappa)\bar\otimes\cl B(K_1,K_2))\cap (\cl B(H_1,H_2)\bar\otimes \frak{M}_{\min}(\lambda))\\
& = & (\frak{M}_{\max}(\kappa)\bar\otimes\cl B(K_1,K_2))\cap (\cl B(H_1,H_2)\bar\otimes \frak{M}_{\max}(\lambda))\\
& = & \cl F(\frak{M}_{\max}(\kappa), \frak{M}_{\max}(\lambda))
= \frak{M}_{\max}(\rho(\kappa\times\lambda)).
\end{eqnarray*}
\end{proof}


\begin{corollary}\label{c_synps}
Let $\kappa\subseteq X_1\times X_2$ be an operator synthetic $\omega$-closed set. 
If $\frak{M}_{\max}(\kappa)$ has property $S_{\sigma}$ then 
$\rho(\kappa\times\lambda)$ is operator synthetic for every operator synthetic 
$\omega$-closed set $\lambda\subseteq Y_1\times Y_2$.
\end{corollary}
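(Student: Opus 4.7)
The plan is to derive the corollary as a near-immediate consequence of Proposition \ref{1234}, combined with the defining property of $S_{\sigma}$. Since $\kappa$ and $\lambda$ are both operator synthetic, the equivalences (i)$\Leftrightarrow$(ii)$\Leftrightarrow$(iii) of Proposition \ref{1234} are available, so it suffices to verify any one of conditions (ii) or (iii) in order to conclude that $\rho(\kappa\times\lambda)$ is operator synthetic.

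The natural choice is (ii). I would observe that property $S_{\sigma}$ for $\frak{M}_{\max}(\kappa)$ means, by definition, that
\[
\cl F(\frak{M}_{\max}(\kappa),\cl U) = \frak{M}_{\max}(\kappa)\bar\otimes\cl U
\]
for \emph{every} weak* closed subspace $\cl U\subseteq \cl B(K_1,K_2)$ and all Hilbert spaces $K_1,K_2$. Specialising $\cl U$ to the weak* closed masa-bimodule $\frak{M}_{\max}(\lambda)\subseteq \cl B(K_1,K_2)$ (where $K_i = L^2(Y_i,\nu_i)$) yields precisely condition (ii) of Proposition \ref{1234}.

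Invoking the equivalence (ii)$\Leftrightarrow$(i) of that proposition, which is where the hypothesis that both $\kappa$ and $\lambda$ are operator synthetic enters essentially (via the identity $\frak{M}_{\min}(\rho(\kappa\times\lambda)) = \frak{M}_{\min}(\kappa)\bar\otimes\frak{M}_{\min}(\lambda)$ from \cite{stt}), we conclude that $\rho(\kappa\times\lambda)$ is operator synthetic. There is no real obstacle here: the content of the corollary has already been packaged into Proposition \ref{1234}, and the role of property $S_{\sigma}$ is just to guarantee the Fubini-product equality for the single pair $(\frak{M}_{\max}(\kappa),\frak{M}_{\max}(\lambda))$. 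The proof is therefore a two-line deduction.
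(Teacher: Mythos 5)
Your proof is correct and is exactly the paper's argument: the paper's proof reads ``Immediate from Proposition \ref{1234} (ii)$\Leftrightarrow$(i)'', and your specialisation of the $S_{\sigma}$ property to $\cl U = \frak{M}_{\max}(\lambda)$ to obtain condition (ii) is precisely the intended deduction. Nothing is missing.
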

\begin{proof}
Immediate from Proposition \ref{1234} (ii)$\Leftrightarrow$(i).
\end{proof}

It follows from Corollary \ref{c_synps} that if $\kappa$ is a set of finite width then 
$\rho(\kappa\times\lambda)$ is operator synthetic whenever $\lambda$ is so. 
In fact, we have the following stronger result.

\begin{corollary}\label{c_morear}
Let $\kappa\subseteq X_1\times X_2$ and $\lambda\subseteq Y_1\times Y_2$ be operator synthetic sets
and $\kappa'\subseteq X_1\times X_2$ be an $\omega$-closed set of finite width. 
If $\rho(\kappa\times\lambda)$ is operator synthetic then so is 
$\rho((\kappa\cup \kappa')\times\lambda)$.
\end{corollary}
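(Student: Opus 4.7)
The plan is to reduce the required conclusion, via Proposition \ref{1234}, to a tensor product identity for maximal masa-bimodules, and then to obtain that identity from the hypothesis using Theorem \ref{th_sump}.

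First, I would note that the union $\kappa\cup\kappa'$ is itself operator synthetic: by the result of \cite{eletod}, this is the case whenever one of the two sets is operator synthetic and the other is of finite width. Combined with the operator synthesis of $\lambda$, Proposition \ref{1234} (i)$\Leftrightarrow$(ii), applied to $\kappa\cup\kappa'$ and $\lambda$, reduces the desired conclusion to proving
$$\cl F(\frak{M}_{\max}(\kappa\cup\kappa'),\frak{M}_{\max}(\lambda))=\frak{M}_{\max}(\kappa\cup\kappa')\bar\otimes\frak{M}_{\max}(\lambda).$$

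Set $\cl V=\frak{M}_{\max}(\kappa)$, $\cl B=\frak{M}_{\max}(\kappa')$ and $\cl U=\frak{M}_{\max}(\lambda)$. Since $\kappa$, $\kappa'$ and $\kappa\cup\kappa'$ are all operator synthetic, minimal and maximal masa-bimodules coincide on each of them; combined with the standard identity $\frak{M}_{\min}(\kappa\cup\kappa')=\overline{\frak{M}_{\min}(\kappa)+\frak{M}_{\min}(\kappa')}$, this gives $\frak{M}_{\max}(\kappa\cup\kappa')=\overline{\cl V+\cl B}$. On the other hand, the hypothesis that $\rho(\kappa\times\lambda)$ is operator synthetic gives, via Proposition \ref{1234} (i)$\Leftrightarrow$(ii) again, that
$$\cl F(\cl V,\cl U)=\cl V\bar\otimes\cl U.$$

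The conclusion then follows by applying Theorem \ref{th_sump} with $\cl V$, the finite-width masa-bimodule $\cl B$ and the weak* closed subspace $\cl U$: the identity $\cl F(\cl V,\cl U)=\cl V\bar\otimes\cl U$ upgrades to
$$\cl F(\overline{\cl V+\cl B},\cl U)=\overline{\cl V+\cl B}\bar\otimes\cl U,$$
which, in view of the identification of $\frak{M}_{\max}(\kappa\cup\kappa')$ established above, is exactly the sought identity. I do not foresee any serious obstacle: the argument is a straightforward assembly of Theorem \ref{th_sump}, Proposition \ref{1234} and the union result from \cite{eletod}, with the only minor technical point being the identification $\frak{M}_{\max}(\kappa\cup\kappa')=\overline{\cl V+\cl B}$, which hinges on the synthesis of all three sets involved.
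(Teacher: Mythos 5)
Your proposal is correct and follows essentially the same route as the paper's proof: reduce via Proposition \ref{1234} to the tensor product identity for $\frak{M}_{\max}(\kappa\cup\kappa')$, identify this module with $\overline{\frak{M}_{\max}(\kappa)+\frak{M}_{\max}(\kappa')}$ using the union theorem of \cite{eletod}, and conclude by Theorem \ref{th_sump}. The only (immaterial) difference is how the identification $\frak{M}_{\max}(\kappa\cup\kappa')=\overline{\cl V+\cl B}$ is justified — you go through the minimal modules and synthesis of all three sets, while the paper computes the support of $\overline{\cl V+\cl B}$ directly and invokes the synthesis of the union.
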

\begin{proof}
Let $\cl V = \frak{M}_{\max}(\kappa)$, $\cl B = \frak{M}_{\max}(\kappa')$
and $\cl U = \frak{M}_{\max}(\lambda)$. 
It is straightforward to check that 
the support of $\overline{\cl V + \cl B}$ is $\kappa\cup\kappa'$. 
By \cite[Corollary 4.2]{eletod}, $\kappa\cup\kappa'$ is operator synthetic, 
and hence $\frak{M}_{\max}(\kappa\cup\kappa') = \overline{\cl V + \cl B}$. 
By Proposition \ref{1234}, 
$$\cl F(\frak{M}_{\max}(\kappa),\frak{M}_{\max}(\lambda)) 
= \frak{M}_{\max}(\kappa)\bar\otimes \frak{M}_{\max}(\lambda).$$
By Theorem \ref{th_sump}, 
$$\cl F(\frak{M}_{\max}(\kappa\cup\kappa'),\frak{M}_{\max}(\lambda)) = 
\frak{M}_{\max}(\kappa\cup\kappa')\bar\otimes \frak{M}_{\max}(\lambda).$$
By Proposition \ref{1234}, $\rho((\kappa\cup\kappa')\times\lambda)$ is 
operator synthetic. 
\end{proof}

Our next aim is Theorem \ref{th_fws}, for whose proof we will need some auxiliary lemmas.

\begin{lemma}\label{l_refpres}
Let $\cl U\subseteq \cl B(H_1,H_2)$ be a masa-bimodule and $\phi$ be a Schur idempotent 
acting on 
$\cl B(H_1,H_2)$. Then
$\phi(\Ref{\cl U}) = \Ref{\phi(\cl U)} = \ran\phi\cap\Ref{\cl U}$.
\end{lemma}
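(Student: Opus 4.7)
My plan is to split the conjunction $\phi(\Ref{\cl U}) = \Ref{\phi(\cl U)} = \ran\phi\cap\Ref{\cl U}$ into the two equalities, treating the first essentially formally and reducing the second to the projection-lattice characterization of the reflexive hull together with the fact that the range of a Schur idempotent is reflexive.

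First I would establish $\phi(\Ref{\cl U}) = \ran\phi\cap\Ref{\cl U}$. The inclusion $\subseteq$ is immediate: $\phi(\Ref{\cl U})\subseteq\ran\phi$ trivially, while $\Ref{\cl U}$ is a weak*-closed $\cl D_2,\cl D_1$-bimodule and hence invariant under every Schur map by \cite[Proposition 3.2]{eletod}, yielding $\phi(\Ref{\cl U})\subseteq\Ref{\cl U}$. Conversely, any $T\in\ran\phi\cap\Ref{\cl U}$ satisfies $T=\phi(T)\in\phi(\Ref{\cl U})$ because $\phi$ is idempotent.

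The equality $\Ref{\phi(\cl U)}=\ran\phi\cap\Ref{\cl U}$ requires more care. For $\Ref{\phi(\cl U)}\subseteq\ran\phi\cap\Ref{\cl U}$, monotonicity of $\Ref{\cdot}$ combined with $\phi(\cl U)\subseteq\cl U$ gives $\Ref{\phi(\cl U)}\subseteq\Ref{\cl U}$, while $\phi(\cl U)\subseteq\ran\phi$ together with reflexivity of $\ran\phi$ (the key technical input, described below) gives $\Ref{\phi(\cl U)}\subseteq\Ref{\ran\phi}=\ran\phi$. For the reverse inclusion I would use the standard characterization $\Ref{\cl V}=\{T:QTP=0 \text{ whenever } P\in\cl D_1, Q\in\cl D_2 \text{ are projections with } Q\cl V P=\{0\}\}$, valid for any masa-bimodule $\cl V$. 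Given $T\in\ran\phi\cap\Ref{\cl U}$ and projections $P,Q$ with $Q\phi(\cl U)P=\{0\}$, I would argue in four steps: (a) modularity of $\phi$ yields $\phi(Q\cl U P)=Q\phi(\cl U)P=\{0\}$, so $Q\cl U P\subseteq\ker\phi$; (b) the routine localization $QTPx=QTx\in Q\overline{\cl U x}\subseteq\overline{(Q\cl U P)x}$ for $x\in PH_1$ shows $QTP\in\Ref{Q\cl U P}$; (c) since $\ker\phi=\ran(\id-\phi)$ is itself the range of a Schur idempotent and hence reflexive, $\Ref{Q\cl U P}\subseteq\ker\phi$ and therefore $QTP\in\ker\phi$; (d) $T=\phi(T)$ implies $QTP=\phi(QTP)\in\ran\phi$. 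Combining (c) and (d) forces $QTP\in\ran\phi\cap\ker\phi=\{0\}$, as required.

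The main obstacle I anticipate is justifying that the range of every Schur idempotent is reflexive. For contractive idempotents this is recorded in the preliminaries, where such ranges are shown to be ternary masa-bimodules of the form $\oplus_k\cl B(E_kH_1,F_kH_2)$, which are evidently reflexive. In general, a Schur idempotent corresponds to a $\{0,1\}$-valued Schur multiplier, i.e.\ to a characteristic function of an $\omega$-open subset of $X_1\times X_2$, so its range coincides with $\frak M_{\max}(\kappa)$ for the corresponding $\omega$-closed set $\kappa$ and is reflexive by the very definition of $\frak M_{\max}$; this identification is in the background of the theory and is the technical ingredient the proof would invoke (applied to both $\phi$ and $\phi^\perp$).
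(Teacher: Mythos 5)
Your proof is correct, but it reaches the crucial inclusion $\ran\phi\cap\Ref{\cl U}\subseteq\Ref{\phi(\cl U)}$ by a genuinely different route than the paper. The paper quotes \cite[Proposition 3.3]{eletod} in the form ``$\Ref{\cl U}$ is the set of $X$ with $\theta(X)=0$ for every Schur idempotent $\theta$ annihilating $\cl U$'': if $\theta$ annihilates $\phi(\cl U)$ then $\theta\circ\phi$ is a Schur idempotent annihilating $\cl U$, whence $\theta(\phi(T))=0$ and $\phi(\Ref{\cl U})\subseteq\Ref{\phi(\cl U)}$ in one line; the reverse inclusion then follows because $\phi(\Ref{\cl U})=\ran\phi\cap\Ref{\cl U}$ is an intersection of reflexive spaces, so $\Ref{\phi(\cl U)}\subseteq\Ref{\phi(\Ref{\cl U})}=\phi(\Ref{\cl U})$. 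You instead localize to rectangles: for projections $P\in\cl D_1$, $Q\in\cl D_2$ with $Q\phi(\cl U)P=0$ you place $QTP$ simultaneously in $\ker\phi$ (via $Q\cl U P\subseteq\ker\phi$, the reflexive-hull estimate $QTP\in\Ref{Q\cl U P}$, and reflexivity of $\ker\phi=\ran\phi^{\perp}$) and in $\ran\phi$, forcing $QTP=0$. Your steps (a)--(d) all check out, and your identification of the reflexivity of ranges of (not necessarily contractive) Schur idempotents as the essential external input is exactly right --- it is the same \cite[Proposition 3.3]{eletod} the paper invokes twice. The trade-off is that your argument additionally needs the rectangle characterization $\Ref{\cl V}=\{T: QTP=0 \mbox{ whenever } Q\cl V P=0\}$ for masa-bimodules, i.e.\ the Erdos--Katavolos--Shulman support theory, which is standard but strictly more than the Schur-idempotent characterization the paper uses; in exchange your argument is more concrete and avoids the (mildly implicit) observation that the composition of two Schur idempotents is again a Schur idempotent. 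Your first equality $\phi(\Ref{\cl U})=\ran\phi\cap\Ref{\cl U}$ is proved essentially as in the paper, except that you get $\phi(\Ref{\cl U})\subseteq\Ref{\cl U}$ from invariance of weak*-closed masa-bimodules under Schur maps rather than from the already-established inclusion into $\Ref{\phi(\cl U)}$; both are fine.
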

\begin{proof}
By \cite[Proposition 3.3]{eletod}, 
$\Ref{\cl U}$ coincides with the space of all operators 
$X\in \cl B(H_1,H_2)$ such that $\psi(X) = 0$ whenever $\psi$  is a Schur idempotent annihilating $\cl U$.
Fix $T\in \Ref{\cl U}$ and 
let $\theta$ be a Schur idempotent on $\cl B(H_1,H_2)$ such that $\theta(\phi(\cl U)) = \{0\}$. 
Then $\theta\circ \phi(T) = 0$ and hence $\phi(T)\in \Ref{\phi(\cl U)}$; we thus showed that 
$\phi(\Ref{\cl U}) \subseteq \Ref{\phi(\cl U)}$.

Now suppose that $T\in \phi(\Ref{\cl U})$; then clearly $T\in \ran\phi$ and, by the previous paragraph, 
$T\in \Ref{\phi(\cl U)}\subseteq \Ref{\cl U}$. Thus, $\phi(\Ref{\cl U})\subseteq \ran\phi\cap\Ref{\cl U}$. 
On the other hand, if $T\in \ran\phi\cap\Ref{\cl U}$ then $T = \phi(T)\in \phi(\Ref{\cl U})$; hence,
$\phi(\Ref{\cl U}) = \ran\phi\cap\Ref{\cl U}$. 

By \cite[Proposition 3.3]{eletod}, $\ran\phi$ is reflexive and since 
reflexivity is preserved by intersections, 
the previous paragraph implies that $\phi(\Ref{\cl U})$ is reflexive.
Since $\phi(\cl U) \subseteq \phi(\Ref{\cl U})$, we have 
$\Ref{\phi(\cl U)}\subseteq \Ref{\phi(\Ref{\cl U})} = \phi(\Ref{\cl U})$, and the proof is complete. 
\end{proof}

\begin{lemma}\label{miden}
Let $\phi_i$ be a Schur idempotent,
$\kappa_i\subseteq X_1\times X_2$ be the support of $\ran\phi_i$, and
$\lambda_i \subseteq Y_1\times Y_2$ be an $\omega$-closed set, $i = 1,\dots,r$. Suppose that
$\cup_{k=1}^p \lambda_{m_k}$ is operator synthetic, whenever
$1\leq m_1 < m_2 < \cdots < m_p\leq r$. Then the set
$\rho(\cup_{i=1}^r \kappa_i\times \lambda_i)$ is operator synthetic.
\end{lemma}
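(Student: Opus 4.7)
The plan is to set $\cl B_i := \ran\phi_i = \frak{M}_{\max}(\kappa_i)$ (ranges of Schur idempotents are reflexive by \cite[Proposition 3.3]{eletod}) and $\cl V_i := \frak{M}_{\max}(\lambda_i)$, and to prove that the masa-bimodule $\cl W := \overline{\sum_{i=1}^r \cl B_i\bar\otimes\cl V_i}$ coincides with $\frak{M}_{\max}(\mu)$, where $\mu := \bigcup_{i=1}^r \rho(\kappa_i\times\lambda_i)$. Since each summand $\cl B_i\bar\otimes\cl V_i$ has support $\rho(\kappa_i\times\lambda_i)$ by (\ref{equv}), $\cl W$ itself has support $\mu$, so the extremal definitions of $\frak{M}_{\min}$ and $\frak{M}_{\max}$ immediately give $\frak{M}_{\min}(\mu) \subseteq \cl W \subseteq \frak{M}_{\max}(\mu)$; the operator synthesis of $\mu$ will thus follow once the inclusion $\frak{M}_{\max}(\mu)\subseteq\cl W$ is established.

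To that end, I will exploit the fact that Schur idempotents relative to a fixed masa pair commute under composition (composition corresponds to pointwise multiplication of Schur multipliers) and that each $\id - \phi_j$ is again a Schur idempotent. For every subset $I\subseteq\{1,\dots,r\}$ the composition
$$\phi_I := \prod_{i\in I}\phi_i \cdot \prod_{j\notin I}(\id - \phi_j)$$
is then a Schur idempotent; these are pairwise orthogonal and satisfy $\sum_I\phi_I = \prod_i(\phi_i + (\id-\phi_i)) = \id$. Tensoring with the identity on $\cl B(K_1,K_2)$ yields $\sum_I\tilde\phi_I = \id$, so for $T\in \frak{M}_{\max}(\mu)$ I will write $T = \sum_I\tilde\phi_I(T)$ and aim to show that each $\tilde\phi_I(T)\in\cl W$.

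Writing $\kappa_I$ for the support of $\ran\phi_I$, the two containments $\kappa_I\subseteq\kappa_i$ for $i\in I$ and $\kappa_I\cap\kappa_j\cong\emptyset$ for $j\notin I$ will yield the support identity
$$\mu\cap\rho(\kappa_I\times(Y_1\times Y_2)) = \rho\bigl(\kappa_I\times{\textstyle\bigcup_{i\in I}\lambda_i}\bigr).$$
Since $\tilde\phi_I$ is a Schur map for the pair $(\cl D_1\bar\otimes\cl C_1, \cl D_2\bar\otimes\cl C_2)$, it preserves $\frak{M}_{\max}(\mu)$, placing $\tilde\phi_I(T)$ in $\frak{M}_{\max}(\rho(\kappa_I\times\bigcup_{i\in I}\lambda_i))$. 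Identity (\ref{eqm}) of Proposition \ref{1234} then rewrites this as $\cl F(\ran\phi_I, \frak{M}_{\max}(\bigcup_{i\in I}\lambda_i))$; Lemma \ref{l_inphi}(ii) gives property $S_\sigma$ for $\ran\phi_I$, collapsing the Fubini product to $\ran\phi_I\bar\otimes\frak{M}_{\max}(\bigcup_{i\in I}\lambda_i)$. The hypothesis that $\bigcup_{i\in I}\lambda_i$ is operator synthetic forces $\frak{M}_{\max}(\bigcup_{i\in I}\lambda_i) = \overline{\sum_{i\in I}\cl V_i}$ (by sandwiching the latter between $\frak{M}_{\min}$ and $\frak{M}_{\max}$ of the union), and distributivity of $\bar\otimes$ over weak*-closed sums together with $\ran\phi_I\subseteq\cl B_i$ for $i\in I$ then deposits $\tilde\phi_I(T)$ inside $\cl W$.

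The hard part will be the support identity in the previous paragraph and the subsequent Fubini collapse: it is precisely here that the full hypothesis on arbitrary sub-unions $\bigcup_{k=1}^p\lambda_{m_k}$ being operator synthetic enters, since each subset $I\subseteq\{1,\dots,r\}$ produces such a sub-union. This is also why the Boolean decomposition $\id = \sum_I\tilde\phi_I$ appears essential: a naive induction on $r$ using only Corollary \ref{c_morear} does not seem to suffice, because the latter allows one to adjoin a set of finite width only to a product with a fixed $\lambda$, whereas here the $\lambda_i$ vary.
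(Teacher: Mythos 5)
Your decomposition $T=\sum_I\tilde\phi_I(T)$ and the treatment of each piece --- reflexivity of $\ran\phi_I$, the support computation $\mu\cap\rho(\kappa_I\times(Y_1\times Y_2))=\rho(\kappa_I\times\bigcup_{i\in I}\lambda_i)$, identity (\ref{eqm}), property $S_\sigma$ of ranges of Schur idempotents (Lemma \ref{l_pre}(ii)), and the synthesis of the sub-unions --- is sound and is essentially the paper's argument: the paper uses exactly the same Boolean family $\phi_M=\phi_1^{\epsilon_1}\cdots\phi_r^{\epsilon_r}$ and invokes the sub-union hypothesis at the same point. The only structural difference is that the paper then relabels so that the idempotents are pairwise orthogonal and finishes by induction on $r$, the base case being Corollary \ref{c_synps}, which is precisely your ``Fubini collapse''; your version short-circuits that induction by placing each $\tilde\phi_I(T)$ in $\cl W$ directly, which works.

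There is, however, one genuine logical gap in your framing. You reduce synthesis of $\mu$ to the single inclusion $\frak{M}_{\max}(\mu)\subseteq\cl W$, on the grounds that $\frak{M}_{\min}(\mu)\subseteq\cl W\subseteq\frak{M}_{\max}(\mu)$. But from $\frak{M}_{\max}(\mu)\subseteq\cl W$ you only obtain $\cl W=\frak{M}_{\max}(\mu)$, hence $\frak{M}_{\min}(\mu)\subseteq\frak{M}_{\max}(\mu)$, which is vacuous; what synthesis requires is the reverse containment $\cl W\subseteq\frak{M}_{\min}(\mu)$. This is where identity (\ref{eqmin}) must enter: each $\kappa_i$, being the support of a ternary masa-bimodule, is a set of finite width and hence operator synthetic, and each $\lambda_i$ is synthetic (your hypothesis with $p=1$), so $\cl B_i\bar\otimes\cl V_i=\frak{M}_{\min}(\kappa_i)\bar\otimes\frak{M}_{\min}(\lambda_i)=\frak{M}_{\min}(\rho(\kappa_i\times\lambda_i))\subseteq\frak{M}_{\min}(\mu)$, whence $\cl W=\frak{M}_{\min}(\mu)$. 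This is exactly the opening step of the paper's proof; once it is inserted, your argument is complete.
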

\begin{proof}
Set $\kappa = \rho(\cup_{i=1}^r \kappa_i\times \lambda_i)$, 
$\cl U_i = \frak{M}_{\min}(\lambda_i)$ and 
$\cl W = \frak{M}_{\min}(\kappa)$. 
By (\ref{equv}), the support of the masa-bimodule
$\overline{\sum_{i=1}^r \frak{M}_{\min}(\kappa_i)\bar\otimes \frak{M}_{\min}(\lambda_i)}$ 
is $\rho(\cup_{i=1}^r \kappa_i\times\lambda_i)$; by the minimality property of $\cl W$
and the fact that the sets $\kappa_i$ and $\lambda_i$ are 
operator synthetic, we have that 
$$\cl W = \overline{\sum_{i=1}^r \frak{M}_{\min}(\kappa_i)\bar\otimes \frak{M}_{\min}(\lambda_i)} = 
\overline{\sum_{i=1}^r \frak{M}_{\max}(\kappa_i)\bar\otimes \frak{M}_{\max}(\lambda_i)}.$$

For each $i = 1,\dots,r$, let 
$\phi_i^1 = \phi_i$ and $\phi_i^{-1} = \phi_i^{\perp}$, 
and for each subset $M$ of $\{1,\dots,r\}$, let 
$\phi_M = \phi_1^{\epsilon_1} \phi_2^{\epsilon_2} \cdots \phi_r^{\epsilon_r},$
where $\epsilon_i = 1$ if $i\in M$ and $\epsilon_i = -1$ if $i\not\in M$. 

Fix $T\in \Ref{\cl W}$; we will show that $T\in \cl W$. This will then imply that 
$\cl W = \Ref{\cl W}$, and hence that 
$\rho(\cup_{i=1}^r \kappa_i\times \lambda_i)$ is operator synthetic.

We have $T = \sum \tilde{\phi}_M(T)$, where the sum is taken over all subsets $M$ of $\{1,\dots,r\}$. 
By Lemma \ref{l_refpres},
$\tilde{\phi}_M(T)\in \Ref{\tilde{\phi}_M(\cl W)}$ and hence
$$T\in \mbox{{\rm Ref}}\left(\sum_M \sum_{i=1}^r \tilde{\phi}_M(\ran\phi_i\bar\otimes\cl U_i)\right).$$
By Lemma \ref{l_pre}, 
$\tilde{\phi}_M(\ran\phi_i\bar\otimes\cl U_i) = \ran\phi_M\bar\otimes\cl U_i$
if $i\in M$, and $\tilde{\phi}_M(\ran\phi_i\bar\otimes\cl U_i) = \{0\}$ otherwise.
Thus, 
$$T\in \mbox{{\rm Ref}}\sum_M 
\left(\ran \phi_M \bar\otimes\overline{\sum_{i\in M} \cl U_i}\right).$$
We have that $\phi_M\phi_N = 0$ if $M\neq N$.
The assumption concerning the synthesis of the finite unions 
of the sets $\lambda_j$ implies that 
$\overline{\sum_{i\in M} \cl U_i} = \frak{M}_{\max}(\cup_{i\in M} \lambda_{i})$; 
we may thus assume that the maps $\phi_i$, $i = 1,\dots,r$ have the property
that $\phi_i\phi_j = 0$ if $i\neq j$.

We now proceed by induction on $r$. 
If $r = 1$, the statement follows from Lemma \ref{l_pre} and Corollary \ref{c_synps}.
Assume that the statement holds if the number of the given terms is at most $r-1$, and
recall that $T\in \Ref{\cl W}$. By Lemma \ref{l_refpres} and the inductive assumption, 
$$\tilde{\phi}_{r}^{\perp}(T) \in \Ref{\tilde{\phi}_{r}^{\perp}(\cl W)} \subseteq
\mbox{{\rm Ref}}\left(\sum_{i=1}^{r-1} \frak{M}_{\max}(\kappa_i)\bar\otimes \frak{M}_{\max}(\lambda_i)\right) \subseteq
\cl W.$$
On the other hand,
\begin{eqnarray*}
\tilde{\phi}_r(T) & \in & 
\Ref{\frak{M}_{\max}(\kappa_r)\bar\otimes \frak{M}_{\max}(\lambda_r)} 
= \frak{M}_{\max}(\kappa_r)\bar\otimes \frak{M}_{\max}(\lambda_r)\\
& = &
\frak{M}_{\min}(\kappa_r)\bar\otimes \frak{M}_{\min}(\lambda_r) \subseteq \cl W
\end{eqnarray*}
(we have used 
Lemma \ref{l_refpres} for the containment, and 
Corollary \ref{c_synps}, Proposition \ref{1234} and the fact that 
$\frak{M}_{\max}(\kappa_r)$ has property $S_{\sigma}$ for the first equality).
Thus,
$$T = \tilde{\phi}_r(T) + \tilde{\phi}_r^{\perp}(T) \in \cl W$$
and the proof is complete.
\end{proof}

\begin{lemma}\label{l_nestsy}
Let $\kappa_i\subseteq X_1\times X_2$ be the support of a nest algebra bimodule, and
let $\lambda_i \subseteq Y_1\times Y_2$ be an $\omega$-closed set, $i = 1,\dots,r$. Suppose that
$\cup_{k=1}^p \lambda_{m_k}$ is operator synthetic whenever
$1\leq m_1 < m_2 < \cdots < m_p\leq r$. Then the set
$\rho(\cup_{i=1}^r \kappa_i\times \lambda_i)$ is operator synthetic.
\end{lemma}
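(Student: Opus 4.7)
The plan is to prove reflexivity of a specific weak$^*$-closed masa-bimodule with the required support. Set $\cl V_i=\frak{M}_{\max}(\kappa_i)$ (a nest algebra bimodule), $\cl U_i=\frak{M}_{\max}(\lambda_i)=\frak{M}_{\min}(\lambda_i)$ (using that each $\lambda_i$ is synthetic, the $p=1$ case of the hypothesis), and $\cl W=\overline{\sum_{i=1}^r \cl V_i\bar\otimes\cl U_i}$. By (\ref{equv}) the support of $\cl W$ is $\rho(\cup_i \kappa_i\times\lambda_i)$, and arguing as at the start of the proof of Lemma~\ref{miden} (using (\ref{eqmin}), synthesis of the finite-width sets $\kappa_i$, and the standard identification of $\frak{M}_{\min}$ of a union with the weak$^*$-closed sum of the $\frak{M}_{\min}$'s) gives $\cl W=\frak{M}_{\min}(\rho(\cup_i \kappa_i\times\lambda_i))$. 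It therefore suffices to prove $\Ref{\cl W}=\cl W$.

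For each $i$, fix a decomposition scheme $(\phi_n^i,\psi_n^i,\cl M_n^i,\cl W_n^i)_{n\in\bb{N}}$ for $\cl V_i$. A direct inspection of the defining formulas shows $\phi_n^i\psi_n^i=\psi_n^i\phi_n^i=0$, so $\phi_n^i+\psi_n^i$ is itself a Schur idempotent and its range $\cl X_n^i:=\cl W_n^i+\cl M_n^i$ is therefore a ternary masa-bimodule. The sequence $(\cl X_n^i)_n$ is decreasing, contains $\cl V_i$, and satisfies $\bigcap_n \cl X_n^i=\cl V_i$: for $X\in\bigcap_n\cl X_n^i$, writing $X=w_n+m_n$ with $w_n\in\cl W_n^i$ and $m_n\in\cl M_n^i$ gives $\phi_n^i(X)=m_n$ (using $\phi_n^i\psi_n^i=0$), so any weak$^*$ cluster point of $(\phi_n^i(X))_n$ lies in $\bigcap_n\cl M_n^i\subseteq\cl V_i$, while $X-\phi_n^i(X)=w_n\in\cl V_i$, and passing to the limit yields $X\in\cl V_i$.

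Set $\cl W_n=\overline{\sum_{i=1}^r \cl X_n^i\bar\otimes\cl U_i}$, so $\cl W\subseteq\cl W_n$ for every $n$. Each $\cl X_n^i$ is the range of a Schur idempotent, hence Lemma~\ref{miden} applied to the idempotents $\phi_n^i+\psi_n^i$ and the sets $\lambda_i$ shows that $\supp\cl W_n$ is operator synthetic; in particular $\cl W_n$ is reflexive. Fix $T\in\Ref{\cl W}$; by monotonicity of $\mathrm{Ref}$, $T\in\Ref{\cl W_n}=\cl W_n$ for every $n$, so $T\in\bigcap_n\cl W_n$.

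The decisive step, which I expect to be the principal obstacle, is the identification $\bigcap_n\cl W_n=\cl W$. Applying Corollary~\ref{epsilona} (extended to countable intersections, which is routine for decreasing sequences of finite-width bimodules and can also be deduced by iterating Proposition~\ref{mazi}(v) once per coordinate) to the ternary bimodules $\cl X_j^i$ with independent parameters yields
\[
\bigcap_{j_1,\dots,j_r\in\bb{N}} \overline{\sum_{i=1}^r \cl X_{j_i}^i\bar\otimes\cl U_i} \;=\; \overline{\sum_{i=1}^r \Bigl(\bigcap_j \cl X_j^i\Bigr)\bar\otimes\cl U_i} \;=\; \cl W.
\]
The multi-parameter intersection on the left is trivially contained in the diagonal intersection $\bigcap_n\cl W_n$, while the reverse follows from the decreasing property: given $(j_1,\dots,j_r)$, setting $J=\max_i j_i$ gives $\cl X_J^i\subseteq\cl X_{j_i}^i$ for every $i$, hence $\cl W_J\subseteq\overline{\sum_i \cl X_{j_i}^i\bar\otimes\cl U_i}$; thus $\bigcap_n\cl W_n\subseteq\overline{\sum_i \cl X_{j_i}^i\bar\otimes\cl U_i}$ for every $(j_1,\dots,j_r)$, and intersecting over $(j_1,\dots,j_r)$ gives $\bigcap_n\cl W_n\subseteq\cl W$. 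Combining with $T\in\bigcap_n\cl W_n$ yields $T\in\cl W$, so $\Ref{\cl W}=\cl W$ as required.
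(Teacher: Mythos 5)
Your overall architecture (reduce to $\Ref{\cl W}=\cl W$, sandwich $\Ref{\cl W}$ inside the reflexive spaces $\cl W_n$ via Lemma \ref{miden}, then identify $\bigcap_n\cl W_n$ with $\cl W$) is sound and attractively cleaner than the paper's argument, but the step you yourself flag as decisive has a genuine gap. First, $\cl X_n^i=\ran(\phi_n^i+\psi_n^i)$ is \emph{not} a ternary masa-bimodule: $\phi_n^i+\psi_n^i$ is the block upper-triangular truncation relative to a finite nest, which is a Schur idempotent but not a contractive one (its norm grows like $\log n$), and its range is a nest algebra bimodule, not a TRO (already for $2\times 2$ upper triangular matrices, $E_{22}E_{12}^*E_{11}=E_{21}$ leaves the space). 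This mislabelling is harmless for the application of Lemma \ref{miden}, which only needs Schur idempotents, but it is fatal for the intersection step: every countable-intersection result in the paper (Lemma \ref{l_intem}, all parts of Proposition \ref{mazi}) is proved for nested sequences of \emph{contractive} Schur idempotents, because the proofs extract weak* cluster points of sequences $(\tilde\rho_n(T))_n$, which requires $\sup_n\|\rho_n\|<\infty$. Corollary \ref{epsilona} is a finite intersection formula and its ``routine extension to countable intersections'' of a decreasing sequence of nest algebra bimodules is exactly the kind of statement Section \ref{s_insum} labours to prove under contractivity hypotheses; applied to the $\cl X_j^i$ it is neither available in the paper nor obtainable by ``iterating Proposition \ref{mazi}(v) once per coordinate'', since the $\cl X_j^i$ are not ranges of contractive idempotents.

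The gap is repairable, and the repair uses precisely the structure you set up but did not exploit: since $\cl X_n^i=\cl W_n^i+\cl M_n^i$ with $\cl W_n^i\subseteq\cl V_i$ and $\cl M_n^i=\ran\phi_n^i$ with $\phi_n^i$ \emph{contractive}, one has $\cl W_n\subseteq\overline{\cl W+\sum_{i=1}^r\cl M_n^i\bar\otimes\cl U_i}$; applying Proposition \ref{mazi}(v) one coordinate at a time (the diagonal intersection over $n$ is squeezed between $\cl W$ and the multi-parameter intersection over $(n_1,\dots,n_r)$, which Proposition \ref{mazi}(v) evaluates coordinate-by-coordinate) gives $\bigcap_n\overline{\cl W+\sum_i\cl M_n^i\bar\otimes\cl U_i}=\overline{\cl W+\sum_i(\cap_n\cl M_n^i)\bar\otimes\cl U_i}=\cl W$, because $\cap_n\cl M_n^i\subseteq\cl V_i$. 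With that substitution your proof goes through and is, in organisation, genuinely different from the paper's: the paper instead splits $T$ into the $3^r$ pieces $\gamma^{M,N}_{k_1,\dots,k_r}(T)$ indexed by $N\subseteq M\subseteq\{1,\dots,r\}$, places each piece using Lemma \ref{miden}, and then removes the indices $k_r,k_{r-1},\dots,k_1$ one at a time by iterated weak* cluster points and Proposition \ref{mazi}(v). Your route trades that bookkeeping for a single application of Lemma \ref{miden} per level $n$ plus the intersection identity, but the intersection identity must be routed through the contractive parts $\cl M_n^i$, not through the $\cl X_n^i$ themselves.
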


\begin{proof}
Set $\kappa = \rho(\cup_{i=1}^r \kappa_i\times \lambda_i)$.
Let $\cl B_i = \frak{M}_{\max}(\kappa_i)$, $\cl U_i = \frak{M}_{\max}(\lambda_i)$,
$i = 1,\dots,r$, and $\cl W = \overline{\sum_{i=1}^r \cl B_i\bar\otimes \cl U_i}$. 
As in the proof of Lemma \ref{miden},
$\cl W = \frak{M}_{\min}(\kappa)$ and hence $\Ref{\cl W} = \frak{M}_{\max}(\kappa)$.

Let $(\phi_{i,k},\theta_{i,k},\cl M_{i,k},\cl Z_{i,k})_{k\in \bb{N}}$ be a decomposition scheme 
for $\cl B_i$, $i = 1,\dots,r$. 
Set $\psi_{i,k} = (\phi_{i,k} + \theta_{i,k})^{\perp}$. 
For each subset $M$ of $\{1,\dots,r\}$, a subset $N$ of $M$, 
and indices $k_1,k_2,\dots,k_r\in \bb{N}$, 
we let 
$\gamma_{k_1,k_2,\dots,k_r}^{M,N} = \gamma_1\circ \dots\circ\gamma_r$, 
where 
$\gamma_i = \tilde{\phi}_{i,k_i}$ if $i\in N$,
$\gamma_i = \tilde{\theta}_{i,k_i}$ if $i\in M\setminus N$ and
$\gamma_i = \tilde{\psi}_{i,k_i}$ if $i\not\in M$. 
Fix $T\in \Ref{\cl W}$.
Then, by Lemmas \ref{l_pre} and \ref{l_refpres}, 
$$\gamma_{k_1,k_2,\dots,k_r}^{M,N}(T) \in 
\mbox{{\rm Ref}}\left( \sum_{i=1}^r \cl Y_{i,k_i}\bar\otimes \cl U_i \right),$$
where 
$\cl Y_{i,k_i}$ is equal to $\cl M_{i,k_i}$ if $i\in N$, to $\cl Z_{i,k_i}$ if $i\in M\setminus N$ and to 
$\{0\}$ if $i\not\in M$. 

Moreover, for all $k_1,k_2,\dots,k_r$, we have that
$$T = \sum_{M,N} \gamma_{k_1,k_2,\dots,k_r}^{M,N}(T),$$
where the sum is taken over all subsets $M$ and $N$of $\{1,\dots,n\}$ 
with $N\subseteq M$.
By Lemma \ref{miden}, 
$$\gamma_{k_1,k_2,\dots,k_r}^{M,N}(T)\in \overline{\sum_{i=1}^r \cl Y_{i,k_i}\bar\otimes \cl U_i}.$$
Since $\cl Z_{i,k}\subseteq \cl B_i$ for every $k\in \bb{N}$, we have that 
$$\gamma_{k_1,k_2,\dots,k_r}^{M,N}(T)\in \overline{\sum_{i=1}^r \cl X_{i,k_i}\bar\otimes \cl U_i},$$
where 
$\cl X_{i,k_i}$ is equal to $\cl M_{i,k_i}$ if $i\in N$, to $\cl B_i$ if $i\in M\setminus N$ and to 
$\{0\}$ if $i\not\in M$. 

Let $\{(M_p,N_p)\}_{p=1}^q$ be an enumeration of the 
pairs of sets $(M,N)$ with $N\subseteq M \subseteq \{1,\dots,r\}$.
For every fixed $r-1$-tuple $(k_1,\dots,k_{r-1})$ of indices, choose 
a weak* convergent subsequence 
$(\gamma_{k_1,k_2,\dots,k_{r-1},k_r'}^{M_1,N_1}(T))_{k_r'\in \bb{N}}$ of 
the sequence 
$(\gamma_{k_1,k_2,\dots,k_r}^{M_1,N_1}(T))_{k_r\in \bb{N}}$, and let 
$\gamma_{k_1,k_2,\dots,k_{r-1}}^{M_1,N_1}(T)$ be its limit.
Then choose 
a weak* convergent subsequence 
$(\gamma_{k_1,k_2,\dots,k_{r-1},k_r''}^{M_2,N_2}(T))_{k_r''\in \bb{N}}$ of 
the sequence 
$(\gamma_{k_1,k_2,\dots,k_{r-1},k_r'}^{M_1,N_1}(T))_{k_r'\in \bb{N}}$, and let 
$\gamma_{k_1,k_2,\dots,k_{r-1}}^{M_2,N_2}(T)$ be its limit.
Continuing inductively, define, for each pair $(M,N)$, an operator
$\gamma_{k_1,k_2,\dots,k_{r-1}}^{M,N}(T)$; 
by the choice of these operators, we have that
$$T = \sum_{M,N} \gamma_{k_1,k_2,\dots,k_{r-1}}^{M,N}(T).$$ 
By Proposition \ref{mazi} (v), 
$$\gamma_{k_1,k_2,\dots,k_{r-1}}^{M,N}(T) \in \overline{\sum_{i=1}^{r-1} \cl X_{i,k_i}\bar\otimes \cl U_i + 
\cl B_r\bar\otimes \cl U_r}.$$

We now choose, as in the previous paragraph, for every $r-2$-tuple 
$(k_1,\dots,k_{r-2})$ of indices, 
a weak* cluster point
$\gamma_{k_1,k_2,\dots,k_{r-2}}^{M,N}(T)$ of 
$(\gamma_{k_1,k_2,\dots,k_{r-1}}^{M,N}(T))_{k_{r-1}\in \bb{N}}$
such that 
$T = \sum_{M,N} \gamma_{k_1,k_2,\dots,k_{r-2}}^{M,N}(T)$, and use
Proposition \ref{mazi} (v) to conclude that
$$\gamma_{k_1,k_2,\dots,k_{r-2}}^{M,N}(T) \in \overline{\sum_{i=1}^{r-2} \cl X_{i,k_i}\bar\otimes \cl U_i + 
\cl B_{r-1}\bar\otimes \cl U_{r-1} + \cl B_r\bar\otimes \cl U_r}.$$

Continuing inductively, 
we conclude that $T = \sum_{M,N} \gamma_{\emptyset}^{M,N}(T)$, where 
$\gamma_{\emptyset}^{M,N}(T) \in \cl W$ for all subsets $N$ and $M$ of 
$\{1,\dots,r\}$ with $N\subseteq M$.
Hence, $T\in \cl W$ and the proof is complete.
\end{proof}

\begin{theorem}\label{th_fws}
Let $\kappa_i\subseteq X_1\times X_2$ be a set of finite width, and
let $\lambda_i \subseteq Y_1\times Y_2$ be an $\omega$-closed set, $i = 1,\dots,r$. Suppose that
$\cup_{k=1}^p \lambda_{m_k}$ is operator synthetic whenever
$1\leq m_1 < m_2 < \cdots < m_p\leq r$. Then the set
$\rho(\cup_{i=1}^r \kappa_i\times \lambda_i)$ is operator synthetic.
\end{theorem}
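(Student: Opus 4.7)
The plan is to reduce Theorem \ref{th_fws} to its nest-algebra-bimodule case (Lemma \ref{l_nestsy}) via the intersection formula in Corollary \ref{epsilona}. Since each $\kappa_i$ is of finite width, write $\kappa_i = \bigcap_{j=1}^{n_i}\kappa_i^{(j)}$, where $\kappa_i^{(j)}$ is the support of a nest algebra bimodule, and set $\cl B_i^{(j)} := \frak{M}_{\max}(\kappa_i^{(j)})$, $\cl B_i := \frak{M}_{\max}(\kappa_i) = \bigcap_{j=1}^{n_i}\cl B_i^{(j)}$, $\cl U_i := \frak{M}_{\max}(\lambda_i)$, $\kappa := \rho(\bigcup_{i=1}^r\kappa_i\times\lambda_i)$ and $\cl W := \overline{\sum_{i=1}^r \cl B_i\bar\otimes\cl U_i}$. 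Identity (\ref{equv}) gives $\supp\cl W = \kappa$, so operator synthesis of $\kappa$ will follow once we identify $\cl W$ with both $\frak{M}_{\max}(\kappa)$ and $\frak{M}_{\min}(\kappa)$.

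For the identification $\cl W = \frak{M}_{\max}(\kappa)$, I invoke Corollary \ref{epsilona} to obtain
$$\cl W \;=\; \bigcap_{j_1,\dots,j_r}\overline{\cl B_1^{(j_1)}\bar\otimes\cl U_1 + \cdots + \cl B_r^{(j_r)}\bar\otimes\cl U_r}.$$
For each tuple $(j_1,\dots,j_r)$, Lemma \ref{l_nestsy} applied to the nest-algebra-bimodule supports $\kappa_i^{(j_i)}$ and the sets $\lambda_i$ (the union hypothesis on the $\lambda_i$ carries over verbatim) shows that $\rho(\bigcup_{i=1}^r\kappa_i^{(j_i)}\times\lambda_i)$ is operator synthetic. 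By (\ref{equv}), the corresponding intersectand has this set as its support, and hence equals the reflexive masa-bimodule $\frak{M}_{\max}(\rho(\bigcup_{i=1}^r\kappa_i^{(j_i)}\times\lambda_i))$. Since arbitrary intersections of reflexive masa-bimodules are reflexive, $\cl W$ is reflexive; combined with $\supp\cl W = \kappa$, this forces $\cl W = \frak{M}_{\max}(\kappa)$.

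For the identification $\cl W = \frak{M}_{\min}(\kappa)$, every set of finite width is operator synthetic, so $\cl B_i = \frak{M}_{\min}(\kappa_i)$, while the $p=1$ instance of the hypothesis yields $\cl U_i = \frak{M}_{\min}(\lambda_i)$. Identity (\ref{eqmin}) then delivers $\cl B_i\bar\otimes\cl U_i = \frak{M}_{\min}(\rho(\kappa_i\times\lambda_i))$, and the standard additivity of $\frak{M}_{\min}$ over finite unions of operator synthetic sets (as employed in the proof of Lemma \ref{miden}) furnishes $\cl W = \frak{M}_{\min}(\kappa)$. Combining the two identifications gives $\frak{M}_{\min}(\kappa) = \frak{M}_{\max}(\kappa)$, so $\kappa$ is operator synthetic. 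The crux is the first identification: it is precisely Corollary \ref{epsilona}---the intricate intersection formula established through the long chain of lemmas in Section \ref{s_insum}---that permits pulling the finite-width decomposition of each $\kappa_i$ out past the weak* closed tensor sum, a manoeuvre indispensable for reducing the general statement to the nest-algebra-bimodule case supplied by Lemma \ref{l_nestsy}.
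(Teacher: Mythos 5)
Your proof is correct and follows essentially the same route as the paper's: both decompose each $\frak{M}_{\max}(\kappa_i)$ into an intersection of nest algebra bimodules, invoke Lemma \ref{l_nestsy} tuple-by-tuple to identify each term as a reflexive (synthetic) module, and reassemble via Corollary \ref{epsilona}. The only cosmetic difference is that you phrase the conclusion as reflexivity of the span $\cl W$ together with the identification $\cl W = \frak{M}_{\min}(\kappa)$, whereas the paper chases a fixed $T\in\frak{M}_{\max}(\kappa)$ through the same chain of inclusions; the logical content is identical.
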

\begin{proof}
Let $\kappa = \rho(\cup_{i=1}^r \kappa_i\times \lambda_i)$,
$\cl B_i = \frak{M}_{\max}(\kappa_i)$ and write $\cl B_i = \cap_{j=1}^{l_i} \cl B_j^i$,
where $\cl B_j^i$ is a nest algebra bimodule, $i = 1,\dots,r$, $j = 1,\dots,l_i$.
Let also $\cl U_i = \frak{M}_{\max}(\lambda_i)$, $i = 1,\dots,r$.
Fix
$$T\in \frak{M}_{\max}(\kappa) = \mbox{{\rm Ref}}\left(\sum_{i=1}^r \cl B_i\bar\otimes\cl U_i\right).$$
Lemma \ref{l_nestsy} implies that, for all $j_1,\dots,j_r$, we have
$$T\in  
\mbox{{\rm Ref}}\left(\sum_{i=1}^r \cl B_{j_i}^i\bar\otimes\cl U_i\right) =
\overline{\sum_{i=1}^r \cl B_{j_i}^i\bar\otimes\cl U_i}.$$
By Corollary \ref{epsilona},
$$T\in \overline{\sum_{i=1}^r \cl B_i\bar\otimes\cl U_i} = \frak{M}_{\min}(\kappa).$$
\end{proof}

\begin{remark} 
{\rm 
In Theorem \ref{th_fws}, 
the condition that $\cup_{k=1}^p \lambda_{m_k}$ be operator synthetic whenever
$1\leq m_1 < m_2 < \cdots < m_p\leq r$ cannot be omitted.
Indeed, given such a choice of indices, 
fix a non-trivial subset of finite width $\kappa$, and let 
$\kappa_{m_j} = \kappa$, $j = 1,\dots,p$, and $\kappa_i = \emptyset$ if $i\not\in \{m_1,\dots,m_p\}$. 
If
$\rho(\cup_{i=1}^r \kappa_i\times\lambda_i) = \rho(\kappa\times(\cup_{j=1}^p \lambda_{m_j}))$ 
is operator synthetic then, by Remark \ref{r_indt}, 
$\cup_{j=1}^p \lambda_{m_j}$ is operator synthetic. }
\end{remark}

We conclude this section with an application of the previous results to 
spectral synthesis. 
Let $G$ be a second countable locally compact group.
By \cite{lt}, a closed set $E\subseteq G$ satisfies local
spectral synthesis if and only if the set 
$$E^* = \{(s,t)\in G\times G : st^{-1}\in E\}$$ is operator
synthetic (here $G$ is equipped with left Haar measure). 
We note that, in the
case the Fourier algebra $A(G)$ has an
approximate identity, $E$ is of local spectral synthesis if and only
if it satisfies spectral synthesis (see \cite{lt}).

Let $\bb{R}^+$ be the group of positive real numbers and
$\omega : G\rightarrow \bb{R}^+$ be a continuous group homomorphism.
For each $t > 0$, let
$$E_{\omega}^t = \{x\in G : \omega(x) \leq t\};$$
it is natural to call such a subset a \emph{level set}.
We have that
$$(E_{\omega}^t)^* = \{(x,y) \in G\times G : \omega(x) \leq t\omega(y)\}$$
and hence the intersections of the form
$$E = E_{\omega_1}^{t_1}\cap\dots\cap E_{\omega_k}^{t_k}$$
are a Harmonic Analysis version of sets of finite width: they have
the property that the corresponding set $E^*$ is a set of finite
width (see also \cite{eletod}).  
Theorem \ref{th_fws} has the following immediate consequence.

\begin{corollary}\label{c_ha}
Let $G$ and $H$ be second countable locally compact groups.
Suppose that $E_1,\dots,E_r$ are level sets in $G$ and $F_1,\dots,F_r$ are closed 
subsets of $H$ such that 
$\cup_{k=1}^p F_{m_k}$ is a set of local spectral synthesis whenever
$1\leq m_1 < m_2 < \cdots < m_p\leq r$.
Then the set $\cup_{i=1}^r E_i\times F_i$ is a set of local spectral 
synthesis of $G\times H$.
\end{corollary}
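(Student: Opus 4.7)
The plan is to translate the harmonic-analysis statement into the operator-theoretic framework via the bridge $E \mapsto E^\ast$ established in \cite{lt}, apply Theorem \ref{th_fws}, and translate back. The key identity to verify along the way is
\begin{equation*}
\Bigl(\bigcup_{i=1}^r E_i\times F_i\Bigr)^{\!\ast} \;=\; \rho\!\left(\bigcup_{i=1}^r E_i^\ast \times F_i^\ast\right),
\end{equation*}
where $\rho$ is the flip map used throughout Section \ref{s_appos} and the star on the left-hand side is taken in $G\times H$. This identity should follow routinely from the definitions: $\bigl((s_1,s_2),(t_1,t_2)\bigr)\in (E\times F)^\ast$ if and only if $(s_1,t_1)\in E^\ast$ and $(s_2,t_2)\in F^\ast$, which is exactly the condition that $\rho^{-1}$ carries the point into $E^\ast\times F^\ast$; and the star clearly commutes with finite unions.

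Next I would unpack each hypothesis into the operator setting. First, since each $E_i$ is a level set, $E_i^\ast$ has the form $\{(x,y): \omega_i(x)\le t_i\omega_i(y)\}$, i.e.\ the set of solutions of a single measurable function inequality; by the characterisation of sets of finite width recalled in Section \ref{s_appos}, $E_i^\ast$ is a set of finite width. Second, since $\cup_{k=1}^p F_{m_k}$ is a set of local spectral synthesis, the bridge from \cite{lt} gives that $\bigl(\cup_{k=1}^p F_{m_k}\bigr)^{\!\ast} = \cup_{k=1}^p F_{m_k}^{\ast}$ is operator synthetic. Setting $\kappa_i = E_i^\ast$ and $\lambda_i = F_i^\ast$, the hypotheses of Theorem \ref{th_fws} are therefore met.

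Applying Theorem \ref{th_fws} yields that $\rho\!\left(\cup_{i=1}^r \kappa_i\times\lambda_i\right) = \rho\!\left(\cup_{i=1}^r E_i^\ast\times F_i^\ast\right)$ is operator synthetic. By the identity displayed in the first paragraph, this set coincides with $\bigl(\cup_{i=1}^r E_i\times F_i\bigr)^{\!\ast}$, viewed as a subset of $(G\times H)\times (G\times H)$. Invoking the bridge of \cite{lt} once more, this time in the reverse direction and for the group $G\times H$ (which is second countable by hypothesis), we conclude that $\cup_{i=1}^r E_i\times F_i$ is a set of local spectral synthesis of $G\times H$.

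The main obstacle is really just the verification of the compatibility identity relating the stars, the flip $\rho$ and the direct product of groups; once this is in place, together with the observation that level sets correspond to sets of finite width under the bridge, the corollary reduces immediately to Theorem \ref{th_fws}. One small subtlety worth checking is that the bridge $E\leftrightarrow E^\ast$ is applied consistently with the left-Haar-measure convention on both $G$, $H$, and $G\times H$, so that the statements for individual groups combine correctly with the tensor identification $L^2(G\times H)\cong L^2(G)\otimes L^2(H)$ used implicitly when identifying operators supported on $(E\times F)^\ast$.
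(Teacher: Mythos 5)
Your proof is correct and is essentially the paper's intended argument: the paper states Corollary \ref{c_ha} as an ``immediate consequence'' of Theorem \ref{th_fws}, relying on exactly the translation you spell out --- the identity $(\cup_i E_i\times F_i)^* = \rho(\cup_i E_i^*\times F_i^*)$, the fact that level sets have star-sets of finite width, and the bridge of \cite{lt} applied to $G$, $H$ and $G\times H$. The details you supply (including the coordinate bookkeeping for $\rho$ and the second countability of $G\times H$) are precisely the ones the authors leave implicit.
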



\section{Fubini products and Morita equivalence}\label{s_mor}

In this section, we show how tensor product formulas relate to the notion of 
spacial Morita equivalence introduced in \cite{eletro}.
For subspaces $\cl X$ and $\cl Y$ of operators, we let 
$$[\cl X\cl Y] = \overline{\left\{\sum_{i=1}^k X_iY_i : X_i\in \cl X, Y_i \in \cl Y, i = 1,\dots,k, k\in \bb{N}\right\}}.$$
We recall the following definition from \cite{ele3}:

\begin{definition}\label{E1} 
Let $\cl A$ (resp. $\cl B$ ) be a weak* closed unital algebra acting 
on a Hilbert space $H_1$ (resp. $H_2$). We say
that $\cl A$ is \emph{spatially embedded} in $\cl B$ if there exist a 
$\cl B,\cl A$-bimodule $\cl X\subseteq \cl B(H_1,H_2)$ and
an $\cl A,\cl B$-bimodule
$\cl Y\subseteq \cl B(H_2,H_1)$ such that
$[\cl X\cl Y] \subseteq \cl B$ and
$[\cl Y\cl X] = \cl A$.

If, moreover, $\cl B = [\cl X\cl Y]$, we call $\cl A$ and $\cl B$ \emph{spatially Morita equivalent}.
\end{definition}

We note
that if two unital dual operator algebras $\cl A$ and $\cl B$ are weak*  Morita equivalent in the sense of \cite{bk} then they have completely isometric representations 
$\alpha $ and $\beta $ such that the algebras
$\alpha (\cl A)$ and $\beta (\cl B)$ are spatially Morita equivalent
(this fact will not be used in the sequel).

\begin{theorem}\label{3.1} 
Let $H_1$, $H_2$ and $K$ be Hilbert spaces and $\cl A\subseteq \cl B(H_1)$ 
and $\cl B\subseteq \cl B(H_2)$
be weak* closed unital algebras.
Suppose that $\cl A$ is spatially embedded in $\cl B$ and
let $\cl U\subseteq \cl B(K)$ be a weak* closed space
such that $\cl F(\cl B, \cl U)= \cl B \bar\otimes  \cl U.$ 
Then $\cl F(\cl A, \cl U)=\cl A \bar\otimes \cl U.$
\end{theorem}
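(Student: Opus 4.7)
The plan is to transfer operators in $\cl F(\cl A, \cl U)$ into $\cl F(\cl B, \cl U)$ by spatial conjugation with elements of $\cl X$ and $\cl Y$, apply the tensor product formula that holds for $\cl B$, and then reverse the conjugation using the identity $[\cl Y \cl X] = \cl A$ together with the unitality of $\cl A$.

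Fix $T \in \cl F(\cl A, \cl U) \subseteq \cl A \bar\otimes \cl B(K)$, and for $X \in \cl X$, $Y \in \cl Y$ set $S_{X,Y} = (X \otimes I_K) T (Y \otimes I_K)$. Two slice computations should confirm $S_{X,Y} \in \cl F(\cl B, \cl U)$. For the left slices, if $\tau \in \cl B(K)_*$ then $L_\tau(S_{X,Y}) = X L_\tau(T) Y \in \cl X \cl A \cl Y \subseteq [\cl X \cl Y] \subseteq \cl B$, using the $\cl B,\cl A$-bimodule property of $\cl X$; hence $S_{X,Y} \in \cl B \bar\otimes \cl B(K)$. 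For the right slices, given $\phi \in \cl B_*$, the functional $\phi_{X,Y}$ on $\cl A$ defined by $\phi_{X,Y}(A) = \phi(XAY)$ is weak*-continuous, and a check on elementary tensors yields $R_\phi(S_{X,Y}) = R_{\phi_{X,Y}}(T) \in \cl U$. Thus $S_{X,Y} \in \cl F(\cl B, \cl U) = \cl B \bar\otimes \cl U$ by hypothesis.

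Next, for $X' \in \cl X$ and $Y' \in \cl Y$, the weak*-continuous map $S \mapsto (Y' \otimes I_K) S (X' \otimes I_K)$ sends each elementary tensor $B \otimes U$ to $Y'BX' \otimes U$, with $Y'BX' \in \cl Y \cl B \cl X \subseteq [\cl Y \cl X] = \cl A$ by the $\cl A,\cl B$-bimodule property of $\cl Y$. Since $\cl A \bar\otimes \cl U$ is weak*-closed, this map carries $\cl B \bar\otimes \cl U$ into $\cl A \bar\otimes \cl U$, so $(Y'X \otimes I_K) T (YX' \otimes I_K) \in \cl A \bar\otimes \cl U$ for all such $X, X', Y, Y'$. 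By linearity, $(A_1 \otimes I_K) T (A_2 \otimes I_K) \in \cl A \bar\otimes \cl U$ for every $A_1, A_2 \in \mathrm{span}(\cl Y \cl X)$, and this linear span is weak*-dense in $\cl A$. Since $\cl A$ is unital, $I_{H_1}$ is a weak* limit of a net $(A_\alpha)$ in $\mathrm{span}(\cl Y \cl X)$; two applications of separate weak*-continuity of multiplication in $\cl B(H_1 \otimes K)$ then produce $T$ as a weak* limit of operators lying in $\cl A \bar\otimes \cl U$, whence $T \in \cl A \bar\otimes \cl U$. The reverse inclusion $\cl A \bar\otimes \cl U \subseteq \cl F(\cl A, \cl U)$ is standard.

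The main subtlety is the inner transfer step $S_{X,Y} \in \cl F(\cl B, \cl U)$: the left-slice identity $L_\tau(S_{X,Y}) = X L_\tau(T) Y$ is routine, but the right-slice identity $R_\phi(S_{X,Y}) = R_{\phi_{X,Y}}(T)$ requires verifying that $\phi_{X,Y} \in \cl A_*$ (via weak*-continuity of the bilinear map $A \mapsto XAY$) and checking the slice identity on elementary tensors before extending by weak*-continuity. Once this transfer step is in place, the bimodule sandwich argument and the final weak* approximation by elements of $\mathrm{span}(\cl Y \cl X)$ are straightforward.
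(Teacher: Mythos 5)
Your proof is correct and follows essentially the same route as the paper: conjugate $T$ by $X\otimes I$ and $Y\otimes I$ to transfer it into $\cl F(\cl B,\cl U)=\cl B\bar\otimes\cl U$, conjugate back with elements of $\cl Y$ and $\cl X$ to land in $\cl A\bar\otimes\cl U$, and use $I\in\cl A=[\cl Y\cl X]$ together with separate weak* continuity to recover $T$. The only (harmless) deviations are that you certify $S_{X,Y}\in\cl B\bar\otimes\cl B(K)$ via left slice maps and Kraus' theorem for $\cl B(K)$, where the paper just pushes an approximating net of elementary tensors through the conjugation, and that you spell out the final density argument that the paper leaves implicit.
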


\begin{proof}
Let $\cl X\subseteq \cl B(H_1,H_2)$ and $\cl Y\subseteq \cl B(H_2,H_1)$ 
be subspaces satisfying the conditions of Definition \ref{E1}, and 
$T\in \cl A\bar\otimes \cl B(K)$ be such that
$R_\phi (T)\in \cl U$ for all
$\phi \in \cl B(H_1,H_2)_*.$ 
Suppose that $$T= \mbox{w}^*\mbox{-}\lim_n \sum_{i=1}^{m_n} A_i^n \otimes S_i^n,$$
where $(A_i^n)_{i=1}^{m_n} \subseteq \cl A$ and $(S_i^n)_{i=1}^{m_n} \subseteq \cl B(K).$
Fix $X_1\in \cl X$ and $Y_1\in \cl Y$ and set 
$S = (X_1\otimes I)T(Y_1\otimes I)$. 
For $\psi \in \cl B(H_2)_*$, let $\phi \in \cl B(H_1)_*$ be given by 
$\phi (A)=\psi (X_1AY_1)$, $A\in \cl B(H_1)$.
Since 
$$R_\phi (T) = \mbox{w}^*\mbox{-}\lim_n\sum_{i=1}^{m_n}\phi (A_i^n)S_i^n\in \cl U,$$ 
we have that $R_\psi (S)\in \cl U,$ for every $\psi\in \cl B(H_2)_*$. 
By our assumption, 
$S\in \cl B \bar\otimes \cl U$. 
Therefore,
$(X_1\otimes I)T(Y_1\otimes I)\in \cl B \bar\otimes  \cl U$ for all $X_1\in \cl X$ and all 
$Y_1\in \cl Y$. It follows that
$$(Y_2X_1\otimes I)T(Y_1X_2\otimes I)\in \cl A \overline{\otimes}  \cl U, \ \ \mbox{ for all }
X_1, X_2\in \cl X, Y_1, Y_2\in \cl Y.$$
Since $I\in \cl A = [\cl Y\cl X]$, it follows that $T\in \cl A\bar\otimes \cl U$.
\end{proof}

The following corollary is straightforward from Theorem \ref{3.1}.

\begin{corollary}\label{c_unal}
Suppose that $\cl A$ and $\cl B$ are weak* closed unital operator algebras.

(i) \ Suppose that $\cl A$ is spatially embedded in $\cl B$. 
If $\cl B$ has property $S_\sigma $ then so does $\cl A$.

(ii) Suppose that $\cl A$ and $\cl B$ are spatially Morita equivalent. 
Then $\cl A$ has property $S_\sigma $ precisely when $\cl B$ does so.
\end{corollary}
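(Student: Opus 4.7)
The plan is to obtain both parts of the corollary as essentially immediate consequences of Theorem \ref{3.1}; the real content has already been established there. For part (i), I would simply unpack the definition of property $S_\sigma$: it asserts that $\cl F(\cl B,\cl U) = \cl B\bar\otimes\cl U$ for every weak* closed subspace $\cl U\subseteq \cl B(K)$ and every Hilbert space $K$. Feeding each such $\cl U$ into Theorem \ref{3.1} (whose hypothesis on $\cl B$ is thereby verified) immediately yields $\cl F(\cl A,\cl U) = \cl A\bar\otimes\cl U$. Since $\cl U$ and $K$ were arbitrary, $\cl A$ possesses property $S_\sigma$.

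For part (ii), the forward direction is contained in part (i), since spatial Morita equivalence trivially entails spatial embedding of $\cl A$ in $\cl B$. For the converse, the key observation is that the relation of spatial Morita equivalence is symmetric: if $\cl X\subseteq \cl B(H_1,H_2)$ and $\cl Y\subseteq \cl B(H_2,H_1)$ witness that $\cl A$ and $\cl B$ are spatially Morita equivalent, then swapping the roles, $\cl Y$ is an $\cl A,\cl B$-bimodule, $\cl X$ is a $\cl B,\cl A$-bimodule, $[\cl Y\cl X] = \cl A \subseteq \cl A$, and $[\cl X\cl Y] = \cl B$. Hence $\cl B$ is spatially embedded in $\cl A$ (in fact, spatially Morita equivalent to it), and a second application of part (i) delivers the reverse implication.

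There is essentially no obstacle: the entire argument is a definitional unpacking once Theorem \ref{3.1} is in hand. The only step that requires even momentary checking is the symmetry of spatial Morita equivalence, namely verifying that the bimodule structures on $\cl X$ and $\cl Y$ match under the exchange of roles; but this is transparent from Definition \ref{E1} since a $\cl B,\cl A$-bimodule action on $\cl X$ is exactly the data required to make $\cl X$ the \emph{second} subspace in the swapped pair.
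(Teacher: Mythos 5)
Your argument is correct and is exactly the route the paper takes: the paper dismisses the corollary as ``straightforward from Theorem \ref{3.1}'', and your write-up merely supplies the (accurate) details — quantifying over all $\cl U$ for (i), and the symmetry of spatial Morita equivalence under swapping $\cl X$ and $\cl Y$ for the converse in (ii). Nothing further is needed.
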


The last corollary, which is immediate from \cite{ele3} and Corollary \ref{c_unal},
concerns the inheritance of 
property $S_{\sigma}$ in the class of CSL algebras; we refer the reader to 
\cite{a} for the definition, relevant notation and theory of this class of algebras. 

\begin{corollary} 
Let $\cl L_1$ and $\cl L_2$ be CSL's.

(i) \ Suppose that $\phi : \cl L_1\rightarrow \cl L_2$ is a strongly continuous 
surjective lattice homomorphism. 
If $\mathrm{Alg}(\cl L_1) $  has property $S_\sigma $ then 
so does $\mathrm{Alg}(\cl L_2)$.

(ii) Suppose that $\phi : \cl L_1 \rightarrow \cl L_2$ is a strongly continuous 
lattice isomorphism.
Then the algebra $\mathrm{Alg}(\cl L_1) $  has property $S_\sigma $ if and only if 
$\mathrm{Alg}(\cl L_2)$ does so.
\end{corollary}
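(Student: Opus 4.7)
The plan is to reduce both parts to Corollary \ref{c_unal} by invoking the correspondence, established in \cite{ele3}, between spatial embeddings (resp.\ spatial Morita equivalences) of CSL algebras and strongly continuous surjective lattice homomorphisms (resp.\ lattice isomorphisms) between their CSLs. Concretely, the result from \cite{ele3} to cite states that if $\phi : \cl L_1 \to \cl L_2$ is a strongly continuous surjective lattice homomorphism, then one can construct a bimodule pair $(\cl X,\cl Y)$ witnessing that $\mathrm{Alg}(\cl L_2)$ is spatially embedded in $\mathrm{Alg}(\cl L_1)$ in the sense of Definition \ref{E1}; when $\phi$ is additionally injective, one moreover obtains $[\cl X\cl Y]=\mathrm{Alg}(\cl L_1)$, so that $\mathrm{Alg}(\cl L_1)$ and $\mathrm{Alg}(\cl L_2)$ become spatially Morita equivalent.

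For part (i), assuming $\mathrm{Alg}(\cl L_1)$ has property $S_\sigma$, the spatial embedding of $\mathrm{Alg}(\cl L_2)$ into $\mathrm{Alg}(\cl L_1)$ provided by \cite{ele3} permits a direct application of Corollary \ref{c_unal}(i) with $\cl A = \mathrm{Alg}(\cl L_2)$ and $\cl B = \mathrm{Alg}(\cl L_1)$, yielding the desired conclusion. For part (ii), a strongly continuous lattice isomorphism $\phi : \cl L_1 \to \cl L_2$ gives, via \cite{ele3}, the spatial Morita equivalence of $\mathrm{Alg}(\cl L_1)$ and $\mathrm{Alg}(\cl L_2)$; Corollary \ref{c_unal}(ii) then immediately yields the biconditional.

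The only point requiring care is locating the precise statement in \cite{ele3} that produces the bimodules $\cl X$ and $\cl Y$ out of the lattice map $\phi$, and checking that the unitality and bimodule conditions of Definition \ref{E1} are indeed met; this is essentially a bookkeeping exercise, since the identity operators on $H_1$ and $H_2$ lie in the respective CSL algebras, and the bimodule property over $\mathrm{Alg}(\cl L_1)$ and $\mathrm{Alg}(\cl L_2)$ is automatic from the construction. No part of the argument requires revisiting the machinery of Sections \ref{s_stab}--\ref{s_appos}: the entire content of the corollary is transport of property $S_\sigma$ along the spatial relations of \cite{ele3}, and Corollary \ref{c_unal} is exactly the transport principle needed.
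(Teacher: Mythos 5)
Your proposal is correct and follows exactly the route the paper intends: the paper states this corollary as "immediate from \cite{ele3} and Corollary \ref{c_unal}", i.e., one transports property $S_\sigma$ along the spatial embedding (resp.\ spatial Morita equivalence) of CSL algebras that \cite{ele3} associates to a strongly continuous surjective lattice homomorphism (resp.\ isomorphism). You also get the direction of the embedding right, matching the hypotheses of Corollary \ref{c_unal}(i) with $\cl A = \mathrm{Alg}(\cl L_2)$ and $\cl B = \mathrm{Alg}(\cl L_1)$.
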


\end{document}